\DeclareFontFamily{OT1}{pzc}{}
\DeclareFontShape{OT1}{pzc}{m}{it}{<-> s * [1.10] pzcmi7t}{}
\DeclareMathAlphabet{\mathpzc}{OT1}{pzc}{m}{it}
\DeclareFontFamily{U}{BOONDOX-calo}{\skewchar\font=45 }
\DeclareFontShape{U}{BOONDOX-calo}{m}{n}{<-> s*[1.05] BOONDOX-r-calo}{}
\DeclareFontShape{U}{BOONDOX-calo}{b}{n}{<-> s*[1.05] BOONDOX-b-calo}{}
\DeclareMathAlphabet{\mathcalboondox}{U}{BOONDOX-calo}{m}{n}
\SetMathAlphabet{\mathcalboondox}{bold}{U}{BOONDOX-calo}{b}{n}
\DeclareMathAlphabet{\mathbcalboondox}{U}{BOONDOX-calo}{b}{n}
\newcommand{\Integer}{\mathbb{Z}}
\newcommand{\IntegerP}{\mathbb{Z}_{\geq 0}}
\newcommand{\IntegerPP}{\mathbb{Z}_{>0}}
\newcommand{\Real}{\mathbb{R}}
\newcommand{\RealP}{\mathbb{R}_{\geq 0}}
\newcommand{\RealPP}{\mathbb{R}_{>0}}
\newcommand\given{{\mathbin{}\mid\mathbin{}}}
\newcommand\vect[1]{\mathbf{#1}}
\newcommand\SetSymbol[1][]{\nonscript\:#1\vert\nonscript\:\allowbreak}
\DeclarePairedDelimiterX\Set[1]\{\}{%
  \renewcommand\given{\SetSymbol[\delimsize]}#1} 
\DeclarePairedDelimiterX\innerp[2]{\langle}{\rangle}{#1
  \mathop{}\delimsize|\mathop{} #2} 
\DeclarePairedDelimiterX\norm[1]\lVert\rVert{\ifblank{#1}{\:\cdot\:}{
    #1}}
\DeclareMathOperator{\ri}{ri}
\DeclareMathOperator{\sri}{sri}
\DeclareMathOperator{\domain}{dom}
\DeclareMathOperator{\linspan}{span}
\DeclareMathOperator{\Fix}{Fix}
\DeclareMathOperator{\graph}{gra}
\DeclareMathOperator{\prox}{Prox}
\DeclareMathOperator{\Id}{Id}
\DeclareMathOperator*{\Argmin}{arg\,min}
\DeclareMathOperator*{\ARGMIN}{Arg\,min}
\DeclareMathOperator{\range}{ran}
\newlist{thmlist}{enumerate}{1}
\setlist[thmlist]{label=\textbf{(\roman{thmlisti})},
                  ref=\thetheorem(\roman{thmlisti}),noitemsep}  
\newlist{lemlist}{enumerate}{1}
\setlist[lemlist]{label=\textbf{(\roman{lemlisti})},
                  ref=\thetheorem(\roman{lemlisti}),noitemsep} 
\newlist{exlist}{enumerate}{1}
\setlist[exlist]{label=\textbf{(\roman{exlisti})},
                 ref=\thetheorem(\roman{exlisti}),noitemsep} 
\newlist{factlist}{enumerate}{1}
\setlist[factlist]{label=\textbf{(\roman{factlisti})},
                   ref=\thetheorem(\roman{factlisti}),noitemsep} 
\newlist{proplist}{enumerate}{1}
\setlist[proplist]{label=\textbf{(\roman{proplisti})},
                   ref=\thetheorem(\roman{proplisti}),noitemsep} 
\newlist{asslist}{enumerate}{1}
\setlist[asslist]{label=\textbf{(\roman{asslisti})},
                  ref=\thetheorem(\roman{asslisti}),noitemsep} 
\newlist{deflist}{enumerate}{1}
\setlist[deflist]{label=\textbf{(\roman{deflisti})},
                  ref=\thetheorem(\roman{deflisti}),noitemsep} 
\newlist{algolist}{enumerate}{1}
\setlist[algolist]{label=\textbf{(\roman{algolisti})},
                   ref=\thetheorem(\roman{algolisti}),noitemsep}
\newlist{corlist}{enumerate}{1}      
\setlist[corlist]{label=\textbf{(\roman{corlisti})},
                  ref=\thetheorem(\roman{corlisti}),noitemsep}
\newlist{corlistlist}{enumerate}{1}      
\setlist[corlistlist]{label=\textbf{(\alph{corlistlisti})},
                      ref=\thetheorem(\roman{corlisti})(\alph{corlistlisti}),noitemsep}
\Crefname{theorem}{Theorem}{Theorems}
\Crefname{lemma}{Lemma}{Lemmas}
\Crefname{cor}{Corollary}{Corollaries}
\Crefname{prop}{Proposition}{Propositions}
\Crefname{assumption}{Assumption}{Assumptions}
\Crefname{definition}{Definition}{Definitions}
\Crefname{example}{Example}{Examples}
\Crefname{algo}{Algorithm}{Algorithms}
\Crefname{fact}{Fact}{Facts}
\Crefname{theoremlisti}{Theorem}{Theorems}
\Crefname{lemlisti}{Lemma}{Lemmas}
\Crefname{proplisti}{Proposition}{Propositions}
\Crefname{corlisti}{Corollary}{Corollaries}
\Crefname{corlistlisti}{Corollary}{Corollaries}
\Crefname{asslisti}{Assumption}{Assumptions}
\Crefname{deflisti}{Definition}{Definitions}
\Crefname{exlisti}{Example}{Examples}
\Crefname{algolisti}{Algorithm}{Algorithms}
\Crefname{factlisti}{Fact}{Facts}
\crefname{appsec}{Appendix}{Appendices}
\theoremstyle{plain}
\newtheorem{theorem}{Theorem}[section]
\newtheorem{lemma}[theorem]{Lemma}
\newtheorem{corollary}[theorem]{Corollary}
\newtheorem{prop}[theorem]{Proposition}
\theoremstyle{definition}
\newtheorem{definition}[theorem]{Definition}
\newtheorem{example}[theorem]{Example}
\newtheorem{problem}[theorem]{Problem}
\newtheorem{fact}[theorem]{Fact}
\newtheorem{assumption}[theorem]{Assumption}
\theoremstyle{remark}
\newcommand*{\ie}{%
  \@ifnextchar{,}%
  {\textit{i.e.}}%
  {\textit{i.e.,}\@\xspace}%
}
\newcommand*{\eg}{%
  \@ifnextchar{,}%
  {\textit{e.g.}}%
  {\textit{e.g.,}\@\xspace}%
}
\newcommand*{\etc}{%
  \@ifnextchar{.}%
  {\textit{etc}}%
  {\textit{etc.}\@\xspace}%
}
\newcommand*{\cf}{%
  \@ifnextchar{.}%
  {\textit{cf}}%
  {\textit{cf.}\@\xspace}%
}
\newcommand*{\aka}{%
  \@ifnextchar{,}%
  {\textit{a.k.a.}}%
  {\textit{a.k.a.}\@\xspace}%
}
\begin{document}
\sloppy

\title{Fej\'{e}r-Monotone Hybrid Steepest Descent Method for\\ Affinely Constrained and Composite
  Convex Minimization Tasks\footnote{Preliminary parts of this study can be found in
    \cite{Slavakis.ICCOPT.16, Slavakis.ICASSP.17}.}}

\author[1]{Konstantinos~Slavakis}
\author[2]{Isao~Yamada}

\affil[1]{University at Buffalo, The State University of
  New~York,\protect\\ Dept.\ of Electrical Eng., Buffalo, NY 14260-2500,
  USA.\protect\\ Email: \texttt{kslavaki@buffalo.edu}}
\affil[2]{Tokyo Institute of Technology,\protect\\ Dept.\ of Information and
  Communications Eng., Tokyo 152-8550, Japan.\protect\\ Email: \texttt{isao@sp.ce.titech.ac.jp}}


\maketitle

\begin{abstract}
  This paper introduces the \textit{Fej\'{e}r-monotone hybrid steepest descent method}\/ (FM-HSDM),
  a new member to the HSDM family of algorithms, for solving affinely constrained minimization tasks
  in real Hilbert spaces, where convex smooth and non-smooth losses compose the objective
  function. FM-HSDM offers sequences of estimates which converge weakly and, under certain
  hypotheses, strongly to solutions of the task at hand. In contrast to its HSDM's precursors,
  FM-HSDM enjoys Fej\'{e}r monotonicity, the step-size parameter stays constant across iterations to
  promote convergence speed-ups of the sequence of estimates to a minimizer, while only Lipschitzian
  continuity, and not strong monotonicity, of the derivative of the smooth-loss function is needed
  to ensure convergence. FM-HSDM utilizes fixed-point theory, variational inequalities and
  affine-nonexpansive mappings to accommodate affine constraints in a more versatile way than
  state-of-the-art primal-dual techniques and the alternating direction method of multipliers
  do. Recursions can be tuned to score low computational footprints, well-suited for large-scale
  optimization tasks, without compromising convergence guarantees. Results on the rate of
  convergence to an optimal point are also presented. Finally, numerical tests on synthetic data are
  used to validate the theoretical findings.
\end{abstract}

\noindent\textbf{Keywords:} Convex optimization; composite loss; Hilbert space; affine constraints;
nonexpansive mapping; Fej\'{e}r monotonicity; fixed-point theory; variational inequality.\\

\noindent 2010 Mathematics Subject Classification: 90C25, 65K15.

\section{Introduction}\label{sec:intro}

\subsection{Problem and notation}\label{sec:the.problem}

\begin{problem}
  This paper considers the following composite convex minimization task:
  \begin{align}
    \min\nolimits_{x\in\mathcal{A}\subset \mathcal{X}} f(x) + g(x)\,,
    \label{the.problem}
  \end{align}
  where $\mathcal{X}$ is a real Hilbert space, the loss functions $f, g$ belong to the class
  $\Gamma_0(\mathcal{X})$ of all convex, proper, and lower-semicontinuous functions from
  $\mathcal{X}$ to $(-\infty, +\infty]$~\cite[p.~132]{Bauschke.Combettes.book}, $f$ is everywhere
  (Fr\'{e}chet) differentiable with $L$-Lipschitz-continuous derivative $\nabla f$, \ie, there
  exists an $L\in\RealPP$ such that (s.t.)
  $\norm{\nabla f(x_1) - \nabla f(x_2)} \leq L\norm{x_1 - x_2}$, $\forall x_1, x_2\in\mathcal{X}$,
  and $\mathcal{A}$ is a closed affine subset of $\mathcal{X}$. Throughout the manuscript, it is
  assumed that \eqref{the.problem} possesses a solution.
\end{problem}

Symbols $\Integer$ and $\Real$ stand for sets of all integer and real numbers,
respectively. Moreover, $\IntegerPP := \Set{1,2, \ldots} \subset \Set{0,1,2,\ldots} =: \IntegerP$,
while $\RealPP:= (0,+\infty)$. The algorithms of this paper are built on a real Hilbert space
$\mathcal{X}$, equipped with an inner product $\innerp{\cdot}{\cdot}$, with vectors denoted by
lower-case letters, \eg, $x$. In the special case where $\mathcal{X}$ is finite dimensional, \ie,
Euclidean, vectors of $\mathcal{X}$ are denoted by boldfaced lower-case letters, \eg, $\vect{x}$,
while boldfaced upper-case letters are reserved for matrices, \eg, $\vect{Q}$. Symbol $\Id$ denotes
the identity mapping in $\mathcal{X}$, \ie, $\Id x = x$, $\forall x\in \mathcal{X}$. In the special
case where $\mathcal{X}$ is Euclidean, $\Id$ boils down to the identity matrix, denoted by
$\vect{I}$. Vector/matrix transposition is denoted by the superscript $\top$. For
$g\in \Gamma_0(\mathcal{X})$, $\partial g$ denotes the set-valued subdifferential operator which is
defined as
$x\mapsto \partial g(x) := \Set{\xi\in\mathcal{X} \given g(x) + \innerp{x'-x}{\xi} \leq g(x'),
  \forall x'\in\mathcal{X}}$.

Let $\mathfrak{B}(\mathcal{X}, \mathcal{X}')$ denote all bounded linear operators from $\mathcal{X}$
to $\mathcal{X}'$~\cite{Kreyszig}, and
$\mathfrak{B}(\mathcal{X}) := \mathfrak{B}(\mathcal{X}, \mathcal{X})$. For
$Q\in \mathfrak{B}(\mathcal{X}, \mathcal{X}')$, $\norm{Q} < \infty$ stands for the norm of
$Q$. Mapping $Q^*\in \mathfrak{B}(\mathcal{X}', \mathcal{X})$ stands for the adjoint of
$Q\in \mathfrak{B}(\mathcal{X}, \mathcal{X}')$~\cite{Kreyszig}. In the case of matrices, the adjoint
of a mapping $\vect{Q}$ is nothing but the transpose $\vect{Q}^{\top}$. Mapping
$Q\in \mathfrak{B}(\mathcal{X})$ is called self adjoint if $Q^*= Q$.
In the case of a symmetric matrix $\vect{Q}$, $\lambda(\vect{Q})$
denotes an eigenvalue of $\vect{Q}$. Further,
$\norm{\vect{Q}} = \sigma_{\max}(\vect{Q}) :=
\lambda_{\max}^{1/2}(\vect{Q}^{\top} \vect{Q})$ stands for the
(spectral) norm of $\vect{Q}$, where
$\sigma_{\max}(\cdot)\in \RealPP$ denotes the maximum singular
value and $\lambda_{\max}(\cdot)$ the maximum eigenvalue of a
matrix.

\subsection{Background and contributions}\label{sec:background}

\subsubsection{The hybrid steepest descent method}\label{sec:HSDM}

To solve \eqref{the.problem}, this paper extrapolates the paths established by the hybrid steepest
descent method (HSDM), which was originally introduced to solve a variational-inequality problem of
a strongly-monotone operator over the fixed-point set of a nonexpansive
mapping~\cite{Yamada.HSDM.2001} (see also, \eg, \cite{YO.04, YYY.11, YY.17} and references therein,
for a wider applicability of HSDM in other scenarios). In the context of \eqref{the.problem}, a
version of HSDM solves
\begin{align}
  \min\nolimits_{x\in\Fix T}\ f(x)\,, \label{HSDM.problem} 
\end{align}
where $f$ is a strongly convex function and $\Fix T\subset \mathcal{X}$ denotes the fixed-point set
of a nonexpansive mapping $T:\mathcal{X}\to\mathcal{X}$ (\cf~Sec.~\ref{sec:preliminaries}). For an
arbitrarily fixed starting point $x_0$, HSDM generates the sequence
\begin{align}
  x_{n+1} := Tx_n - \lambda_n \nabla
  f(Tx_n)\,, \label{original.HSDM} 
\end{align}
which strongly converges to the \textit{unique}\/ minimizer of \eqref{HSDM.problem}. To secure
strong convergence, the step sizes $(\lambda_n)_{n\in\IntegerP} \subset\RealP$ satisfy
\begin{enumerate*}[label=\textbf{(\roman*)}]
\item $\sum_{n\in\IntegerP}\lambda_n= +\infty$,
\item $\lim_{n\to\infty}\lambda_n=0$, and
\item $\sum_{n\in\IntegerP} \lvert \lambda_{n+1} - \lambda_{n}
\rvert < +\infty$.
\end{enumerate*}
Further, in the case where $\mathcal{X}$ is Euclidean, $f$ is not necessarily strongly
convex, and $T$ is attracting nonexpansive~\cite{OguraYamadaNonStrictly, HB.Borwein.POCS.96} with
bounded $\Fix T$, the requirements on $(\lambda_n)_{n\in\IntegerP}$ can be relaxed to
\begin{enumerate*}[label=\textbf{(\roman*)}]
\item $\sum_{n\in\IntegerP}\lambda_n= +\infty$,
\item $\sum_{n\in\IntegerP}\lambda_n^2< +\infty$
\end{enumerate*}
for achieving $\lim_{n\to\infty} d_{\mathcal{X}}(\vect{x}_n, \ARGMIN_{\Fix T} f) = 0$, where
$d_{\mathcal{X}}(\vect{x}_n, \ARGMIN_{\Fix T} f)$ stands for the (metric) distance of point
$\vect{x}_n$ from the set of minimizers of $f$ over $\Fix T$~\cite{OguraYamadaNonStrictly}. To
speed up HSDM's convergence rate, conjugate-gradient-based variants were introduced in
\cite{Iiduka.Yamada.09, Iiduka.AMC.11, Iiduka.Math.Prog.15}. For example, for an arbitrarily fixed
starting point $x_0\in\mathcal{X}$, and $d_0:= -\nabla f(x_0)$, the following recursions
\begin{enumerate*}[label=\textbf{(\roman*)}]
\item $x_{n+1} := T(x_n + \mu\lambda_nd_n)$;
\item $d_{n+1} := -\nabla f(x_{n+1}) + \beta_{n+1} d_n$,
\end{enumerate*}
with $\mu>0$, $\lambda_n\in(0,1]$, $\beta_n\in[0,\infty)$, were introduced
in~\cite{Iiduka.Yamada.09}. If $\mu\in (0,2\eta/L^2)$, $\lim_{n\to\infty}\beta_n=0$,
$(\nabla f(x_n))_{n\in\IntegerP}$ is bounded, and
\begin{enumerate*}[label=\textbf{(\roman*)}]
\item $\sum_{n\in\IntegerP}\lambda_n= +\infty$,
\item $\lim_{n\to\infty}\lambda_n=0$,
\item $\sum_{n\in\IntegerP} \lvert \lambda_{n+1} - \lambda_{n} \rvert <
+\infty$, 
\item $\lambda_n/\lambda_{n+1}\leq \sigma$, $(\sigma\geq 1)$,
\end{enumerate*} 
then $(x_n)_{n\in\IntegerP}$ converges strongly to the unique minimizer of \eqref{HSDM.problem}.

\subsubsection{Prior art}\label{sec:PriorArt}

To demonstrate the connections of \eqref{the.problem} with state-of-the-art methods, it is helpful
to notice that the concise description \eqref{the.problem} can be unfolded in several ways to
describe a large variety of convex composite minimization tasks, \eg,
\begin{align}
  \min\nolimits_{x\in\mathcalboondox{A}} \mathcalboondox{f}(x) +
  \sum\nolimits_{j=1}^{J} \mathcalboondox{g}_j(H_jx-r_j)
  \,, \label{problem.multiple.g} 
\end{align}
where $\Set{\mathcalboondox{X}_j}_{j=0}^J$ are real Hilbert spaces,
$\mathcalboondox{f}\in \Gamma_0(\mathcalboondox{X}_0)$,
$\mathcalboondox{g}_j \in \Gamma_0(\mathcalboondox{X}_j)$,
$H_j\in \mathfrak{B}(\mathcalboondox{X}_0, \mathcalboondox{X}_j)$ and
$r_j \in \mathcalboondox{X}_j$, $j\in\Set{1, \ldots, J}$. Moreover, $\nabla \mathcalboondox{f}$ is
$L$-Lipschitz continuous and $\mathcalboondox{A}$ is a closed affine subset of
$\mathcalboondox{X}_0$. Indeed, it can be verified that \eqref{problem.multiple.g} can be recast as
\eqref{the.problem} via
$\mathcal{X} := \mathcalboondox{X}_0 \times \mathcalboondox{X}_1 \times \cdots \times
\mathcalboondox{X}_J = \Set{x:= (x^{(0)}, x^{(1)}, \ldots, x^{(J)}) \given x^{(j)}\in
  \mathcalboondox{X}_j, \forall j\in \Set{0,1, \ldots, J}}$, $f(x):= \mathcalboondox{f}(x^{(0)})$,
$g(x):= \sum_{j=1}^J \mathcalboondox{g}_j(x^{(j)})$, and the closed affine set
$\mathcal{A}:= \Set{x\in \mathcal{X} \given x^{(0)}\in \mathcalboondox{A}, x^{(j)} = H_j
  x^{(0)}-r_j, \forall j\in \Set{1, \ldots, J}}$. Task \eqref{problem.multiple.g}, in the case where
$J=2$, $\mathcalboondox{X} := \mathcalboondox{X}_0 = \mathcalboondox{X}_1$, $H_1 = \Id$,
$r_1 = r_2 = 0$, and $\mathcalboondox{A}:= \mathcalboondox{X}$, \ie,
$\min_{x\in \mathcalboondox{X}} [\mathcalboondox{f}(x) + \mathcalboondox{g}_1(x) +
\mathcalboondox{g}_2(H_2x)]$, has been already studied, \eg, via the primal-dual algorithmic
framework~\cite{Chambolle.Pock.11, Condat.JOTA.13, PLC.Pesquet.composite.12, Vu.13}. Gradient
$\nabla \mathcalboondox{f}$, proximal mappings (\cf~\cref{def:prox}) $\prox_{\mathcalboondox{g}_1}$
and $\prox_{\mathcalboondox{g}^*_2} = \Id - \prox_{\mathcalboondox{g}_2}$~\cite[Rem.~14.4,
p.~198]{Bauschke.Combettes.book}, where $\mathcalboondox{g}^*_2$ stands for the (Fenchel) conjugate
of $\mathcalboondox{g}_2$, as well as adjoint $H_2^*$ are utilized in a computationally efficient
way to generate a sequence $(x_n)_{n\in\IntegerP} \subset \mathcalboondox{X}$, which converges
weakly (and under certain hypotheses, strongly) to a solution of the previous minimization
task. Moreover, task \eqref{problem.multiple.g}, in the case where $J=2$,
$\mathcalboondox{X} := \mathcalboondox{X}_0 = \mathcalboondox{X}_1 = \mathcalboondox{X}_2$,
$H_1 = H_2 = \Id$, $r_1 = r_2 = 0$, and $\mathcalboondox{A}:= \mathcal{X}$, \ie,
$\min_{x\in \mathcalboondox{X}} [\mathcalboondox{f}(x) + \mathcalboondox{g}_1(x) +
\mathcalboondox{g}_2(x)]$, has also attracted attention in the context of the ``three-term operator
splitting'' framework~\cite{Davis.Yin.15, Cevher.Vu.16}. As in \cite{Chambolle.Pock.11,
  Condat.JOTA.13, PLC.Pesquet.composite.12}, $\nabla \mathcalboondox{f}$,
$\prox_{\mathcalboondox{g}_1}$ and $\prox_{\mathcalboondox{g}_2}$ are employed via computationally
efficient recursions in \cite{Davis.Yin.15, Cevher.Vu.16} to generate a sequence which converges
weakly (and under certain hypotheses, strongly) to a solution of the minimization task at hand. All
studies in \cite{Chambolle.Pock.11, Condat.JOTA.13, PLC.Pesquet.composite.12, Davis.Yin.15,
  Cevher.Vu.16} set $\mathcalboondox{A} := \mathcalboondox{X}$. In the case of
$\mathcalboondox{A} \subsetneq \mathcalboondox{X}$, one can accommodate the affine constraint
$\mathcalboondox{A}$ via the use of the indicator function $\iota_{\mathcalboondox{A}}$
[$\iota_{\mathcalboondox{A}}(x):= 0$, if $x\in \mathcalboondox{A}$, and
$\iota_{\mathcalboondox{A}}(x):= +\infty$, if $x\notin \mathcalboondox{A}$] and the additional loss
$\mathcalboondox{g}_3 := \iota_{\mathcalboondox{A}}$. According to the previous discussion, such an
accommodation entails the use of $\prox_{\iota_{\mathcalboondox{A}}} = P_{\mathcalboondox{A}}$,
where $P_{\mathcalboondox{A}}$ denotes the metric projection mapping onto
$\mathcalboondox{A}$. Mapping $P_{\mathcalboondox{A}}$ may become computationally demanding, \eg, in
the case where $\mathcalboondox{X}$ is a Euclidean space and the affine constraints are described by
a matrix of large dimensions (\cf~Fact~\ref{fact:Rockafellar}), since computing
$P_{\mathcalboondox{A}}$ necessitates the costly singular value decomposition of the matrix under
query (\cf~\cref{ex:aff.constraint.LS}). Task \eqref{the.problem} in the case where $\mathcal{X}$ is
a Euclidean space and
$\mathcal{A} := \Set{\vect{x}\in \mathcal{X} \given \vect{a}^{\top} \vect{x}=0}$, for some
$\vect{a}\in \mathcal{X}\setminus\Set{\vect{0}}$, was treated, within a stochastic setting, in
\cite{Necoara.random.14}.

The celebrated alternating direction method of multipliers (ADMM) \cite{glowinski.marrocco.75,
  gabay.mercier.76, Boyd.admm, Bredies.Sun.DR.15, Sun.ADMM.16} deals with the task
\begin{subequations}\label{admm.problem}
  \begin{align}
    \min\nolimits_{(x^{(1)}, x^{(2)})\in\mathcalboondox{X}_1
    \times \mathcalboondox{X}_2}\
    & \mathcalboondox{g}_1(x^{(1)}) +
      \mathcalboondox{g}_2(x^{(2)})\\ 
    \text{s.to}\
    & H_1x^{(1)} + H_2x^{(2)} = r\,, 
  \end{align}
\end{subequations}
where $H_j \in \mathfrak{B}(\mathcalboondox{X}_j, \mathcalboondox{X}_0)$ and
$r\in\mathcalboondox{X}_0$. Again, \eqref{admm.problem} can be recast as \eqref{the.problem} under
the following setting:
$\mathcal{X} := \mathcalboondox{X}_1 \times \mathcalboondox{X}_2 = \Set{x:= (x^{(1)}, x^{(2)})
  \given x^{(1)} \in \mathcalboondox{X}_1, x^{(2)} \in \mathcalboondox{X}_2}$, $f(x) := 0$,
$g(x) := \mathcalboondox{g}_1(x^{(1)}) + \mathcalboondox{g}_2(x^{(2)})$, and
$\mathcal{A} := \Set{x\in \mathcal{X} \given H_1x^{(1)} + H_2x^{(2)} = r}$. Provided that the
inverse mappings $(\lambda H_1^*H_1 + \partial \mathcalboondox{g}_1)^{-1}$ and
$(\lambda H_2^*H_2 + \partial \mathcalboondox{g}_2)^{-1}$ exist, the recursive application of
$(\lambda H_1^*H_1 + \partial \mathcalboondox{g}_1)^{-1}$ and
$(\lambda H_2^*H_2 + \partial \mathcalboondox{g}_2)^{-1}$ generates a sequence which converges
weakly to a solution of \eqref{admm.problem}~\cite{Bredies.Sun.DR.15, Sun.ADMM.16}. ADMM enjoys
extremely wide popularity for minimization problems in Euclidean spaces~\cite{Boyd.admm}, at the
expense of the computation of $(\lambda H_1^*H_1 + \partial \mathcalboondox{g}_1)^{-1}$ and
$(\lambda H_2^*H_2 + \partial \mathcalboondox{g}_2)^{-1}$: there may be cases where computing the
previous inverse mappings entails the costly task of solving a convex minimization sub-problem.

The motivation for the present paper is the algorithmic solution given in the distributed
minimization context of \cite{extra.14, extra.15}: for a Euclidean $\mathcalboondox{X}$, and a
collection of loss functions
$\Set{\mathcalboondox{f}_j, \mathcalboondox{g}_j\in\Gamma_0(\mathcalboondox{X})}_{j=1}^J$, where
$\mathcalboondox{f}_j$ is everywhere differentiable with an $L_j$-Lipschitz continuous
$\nabla \mathcalboondox{f}_j$, $\forall j\in\Set{1, \ldots, J}$, nodes $\mathpzc{N}$
($\lvert \mathpzc{N}\rvert = J$), connected by edges $\mathpzc{E}$ within a network/graph
$\mathpzc{G} := (\mathpzc{N}, \mathpzc{E})$, operate in parallel and cooperate to solve
\begin{subequations}\label{extra.problem}
  \begin{align} 
    \min\nolimits_{(\vect{x}^{(1)},
    \ldots, \vect{x}^{(J)})\in \mathcalboondox{X}^J} 
    & \sum\nolimits_{j=1}^J
      \mathcalboondox{f}_j(\vect{x}^{(j)}) +  
      \sum\nolimits_{j=1}^J
      \mathcalboondox{g}_j(\vect{x}^{(j)}) \\
    \text{s.to}\quad 
    & \vect{x}^{(1)} = \ldots = \vect{x}^{(J)}
      \,, \label{extra.problem.consensus}
  \end{align}
\end{subequations} 
Each node $j\in\mathpzc{N}$ operates only on the pair $(\mathcalboondox{f}_j, \mathcalboondox{g}_j)$
and communicates the information regarding its updates to its neighboring nodes to cooperatively
solve \eqref{extra.problem}, under the consensus constraint of \eqref{extra.problem.consensus}. Once
again, \eqref{extra.problem} can be seen as a special case of \eqref{the.problem} under the
following considerations: $\mathcal{X} := \mathcalboondox{X}^J$,
$f(\vect{x}^{(1)}, \ldots, \vect{x}^{(J)}) := \sum_{j=1}^J \mathcalboondox{f}(\vect{x}^{(j)})$,
$g(\vect{x}^{(1)}, \ldots, \vect{x}^{(J)}) := \sum_{j=1}^J \mathcalboondox{g}(\vect{x}^{(j)})$, and
$\mathcal{A}:= \Set{(\vect{x}^{(1)}, \ldots, \vect{x}^{(J)})\in \mathcal{X} \given \vect{x}^{(1)} =
  \ldots = \vect{x}^{(J)}}$. Upon defining the $J\times J$ mixing matrices $\vect{W} = [w_{ij}]$,
$\tilde{\vect{W}} = [\tilde{w}_{ij}]$, \cite{extra.15} introduced the following recursions to solve
\eqref{extra.problem}: for an arbitrarily fixed starting-point $J\times \dim\mathcalboondox{X}$
matrix $\vect{X}_0$, as well as $\vect{X}_{1/2} := \vect{WX}_0 - \lambda\nabla f(\vect{X}_0)$ and
$\vect{X}_1 := \prox_{\lambda g}(\vect{X}_{1/2})$, repeat for all $n\in\IntegerP$,
\begin{enumerate*}[label=\textbf{(\roman*)}]
\item $\vect{X}_{n+3/2} := \vect{X}_{n+1/2} +
  \vect{W}\vect{X}_{n+1} - \tilde{\vect{W}}\vect{X}_n - \lambda
  [\nabla f(\vect{X}_{n+1}) - \nabla f(\vect{X}_{n})]$;
\item $\vect{X}_{n+2} := \prox_{\lambda g} (\vect{X}_{n+3/2})$.
\end{enumerate*}
If
\begin{enumerate*}[label=\textbf{(\roman*)}]
\item $(i,j)\notin\mathpzc{E}\Rightarrow w_{ij} = \tilde{w}_{ij}
= 0$,
\item $\vect{W}^{\top} = \vect{W}$, $\tilde{\vect{W}}^{\top} =
\tilde{\vect{W}}$,
\item $\ker(\vect{W} - \tilde{\vect{W}}) = \linspan\bm{1} \subset
\ker(\vect{I}- \tilde{\vect{W}})$,
\item $\tilde{\vect{W}} \succ \vect{0}$,
\item $(1/2) (\vect{I} +
\tilde{\vect{W}})\succeq\tilde{\vect{W}}\succeq\vect{W}$, and
\item $\lambda\in (0,{2
\lambda_{\min}(\tilde{\vect{W}})}/{\max_iL_i})$,
\end{enumerate*} 
then the sequence $(\vect{X}_n)_{n\in\mathbb{N}}$ converges to a matrix whose rows provide a
solution to \eqref{extra.problem}.

\subsubsection{Contributions}\label{sec:Contributions}

Driven by the similarity between the algorithmic solution of \cite{extra.14, extra.15} and HSDM, and
aiming at solving \eqref{the.problem}, this study introduces a new member to the HSDM family of
algorithms: the \textit{Fej\'{e}r-monotone}\/ (FM-)HSDM. Building around the simple recursion of
\eqref{original.HSDM} and the concept of a nonexpansive mapping, FM-HSDM's recursions offer
sequences which converge weakly and, under certain hypotheses (uniform convexity of loss functions),
strongly to a solution of \eqref{the.problem}; \cf~Theorems~\ref{thm:basic} and
\ref{thm:plain.vanilla.HSDM}. Fixed-point theory, variational inequalities and affine-nonexpansive
mappings are utilized to accommodate the affine constraint $\mathcal{A}$ in a more flexible way (see,
\eg, \cref{prop:affine.maps} and \cref{ex:aff.constraint.LS}) than the usage of the indicator
function and its associated metric-projection mapping that methods \cite{Condat.JOTA.13,
  PLC.Pesquet.composite.12, Davis.Yin.15, Cevher.Vu.16} promote. Such flexibility is combined with
the first-order information of $f$ and the proximal mapping of $g$ to build recursions of tunable
complexity that can score low-computational-complexity footprints, well-suited for large-scale
minimization tasks. FM-HSDM enjoys Fej\'{e}r monotonicity, and in contrast to \eqref{original.HSDM}
as well as its conjugate-gradient-based variants~\cite{Iiduka.Yamada.09, Iiduka.AMC.11,
  Iiduka.Math.Prog.15}, only Lipschitzian continuity, and not strong monotonicity, of the derivative
of the smooth-part loss is needed to establish convergence of the sequence of estimates. Further, a
constant step-size parameter is utilized to effect convergence speed-ups. Finally, as opposed to
\cite{Iiduka.Yamada.09, Iiduka.AMC.11, Iiduka.Math.Prog.15}, the advocated scheme needs no
boundedness assumptions on estimates or gradients to establish weak (or even strong) convergence of
the sequence of estimates to a solution of \eqref{the.problem}. Results on the rate of convergence
to an optimal point are also presented. Numerical tests on synthetic data are used to validate the
theoretical findings.

\section{Affine nonexpansive mappings and variational inequalities}\label{sec:preliminaries}

\subsection{Nonexpansive mappings and fixed-point sets}

\begin{definition}\label{def:positive.op}
  A self-adjoint mapping $Q\in \mathfrak{B}(\mathcal{X})$ is called \textit{positive}\/ if
  $\innerp{Qx}{x}\geq 0$, $\forall x\in\mathcal{X}$~\cite[\S~9.3]{Kreyszig}. Moreover, the
  self adjoint $\Pi\in \mathfrak{B}(\mathcal{X})$ is called \textit{strongly positive}\/ if there
  exists $\delta\in \RealPP$ s.t.\ $\innerp{\Pi x}{x}\geq \delta\norm{x}^2$,
  $\forall x\in\mathcal{X}$. In the context of matrices, $\vect{Q}$ is positive iff $\vect{Q}$ is
  positive semidefinite, \ie, $\vect{Q}\succeq \vect{0}$. Moreover, $\bm{\Pi}$ is strongly positive
  iff $\bm{\Pi}$ is positive definite, \ie, $\bm{\Pi}\succ \vect{0}$, and $\delta$ in the previous
  definition can be taken to be $\lambda_{\min}(\bm{\Pi})$.
\end{definition}

For a strongly positive $\Pi$, $\innerp{\cdot}{\cdot}_{\Pi}$ stands for the inner product
$\innerp{x}{x'}_{\Pi} := \innerp{x}{\Pi x'}$, $\forall (x, x')\in \mathcal{X}^2$. For a function
$\varphi: \mathcal{X}\to \Real$, $\nabla\varphi$ and $\nabla\varphi(x)$ stand for the
(G\^{a}teaux/Fr\'{e}chet) derivative and gradient at $x\in\mathcal{X}$, respectively~\cite[\S~2.6,
p.~37]{Bauschke.Combettes.book}. Given $Q\in\mathfrak{B}(\mathcal{X})$, $\ker Q$ stands for the
linear subspace $\ker Q := \Set{x \in \mathcal{X} \given Qx = 0}$. Moreover, $\range Q$ denotes the
linear subspace $\range Q := Q\mathcal{X} := \Set{Qx \given x\in\mathcal{X}}$. For the case of a
matrix $\vect{Q}$, $\range \vect{Q}$ is the linear subspace spanned by the columns of
$\vect{Q}$. Finally, the orthogonal complement of a linear subspace is denoted by the superscript
$\perp$.

\begin{definition} The \textit{fixed-point set}\/ of a mapping $T:\mathcal{X} \to\mathcal{X}$ is
  defined as the set $\Fix T := \{x\in \mathcal{X} \given Tx = x\}$.
\end{definition}

\begin{definition} Mapping $T:\mathcal{X}\to\mathcal{X}$ is called
  \begin{deflist}
  \item \textit{Nonexpansive,} if $\norm{Tx - Tx'} \leq \norm{x - x'}$,
    $\forall (x, x')\in \mathcal{X}^2$.

  \item\label{def:fne} \textit{Firmly nonexpansive,} if
    $\norm{Tx - Tx'}^2 \leq \innerp{x - x'}{Tx - Tx'}$, $\forall (x, x')\in \mathcal{X}^2$. Any
    firmly nonexpansive mapping is nonexpansive~\cite[\S4.1]{Bauschke.Combettes.book}.

  \item \textit{$\alpha$-averaged (nonexpansive),} if there exist an $\alpha\in (0,1)$ and a
    nonexpansive mapping $R:\mathcal{X}\to \mathcal{X}$ s.t.\ $T = \alpha R + (1-\alpha)\Id$. It can
    be easily verified that $T$ is nonexpansive with $\Fix R= \Fix T$.

  \end{deflist}
\end{definition}

\begin{fact}[\protect{\cite[Cor.~4.15, p.~63]{Bauschke.Combettes.book}}] 
  The fixed-point set $\Fix T$ of a nonexpansive mapping $T$ is closed and convex.
\end{fact}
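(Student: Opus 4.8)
The plan is to establish the two properties separately: closedness is a soft consequence of continuity, while convexity is where the inner-product structure of $\mathcal{X}$ is essential and does the real work.

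For closedness, I would use that a nonexpansive $T$ is $1$-Lipschitz, hence continuous. Then $\Fix T = (\Id - T)^{-1}(\{0\})$ is the preimage of the closed singleton $\{0\}$ under the continuous map $\Id - T$, and is therefore closed. Equivalently, arguing sequentially, if $(x_n)_{n\in\IntegerP} \subset \Fix T$ with $x_n \to x$, then $\norm{Tx - x} \leq \norm{Tx - Tx_n} + \norm{Tx_n - x} = \norm{Tx - Tx_n} + \norm{x_n - x} \leq 2\norm{x_n - x} \to 0$, forcing $Tx = x$, i.e.\ $x \in \Fix T$.

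For convexity, I would fix $x, y \in \Fix T$ and $\lambda \in (0,1)$ (the endpoints $\lambda\in\{0,1\}$ being trivial) and set $z := (1-\lambda)x + \lambda y$. Nonexpansiveness together with $Tx = x$ and $Ty = y$ gives the two bounds $\norm{Tz - x} = \norm{Tz - Tx} \leq \norm{z - x} = \lambda\norm{x-y}$ and, symmetrically, $\norm{Tz - y} \leq (1-\lambda)\norm{x-y}$. The crux is then the elementary identity valid in any inner-product space, obtained by writing $Tz - z = (1-\lambda)(Tz - x) + \lambda(Tz - y)$, noting $(Tz-x)-(Tz-y) = y-x$, and expanding the square:
\[
  \norm{Tz - z}^2 = (1-\lambda)\norm{Tz - x}^2 + \lambda\norm{Tz - y}^2 - \lambda(1-\lambda)\norm{x-y}^2 .
\]
Substituting the two bounds into the right-hand side makes it collapse to $\lambda(1-\lambda)\norm{x-y}^2\bigl[\lambda + (1-\lambda)\bigr] - \lambda(1-\lambda)\norm{x-y}^2 = 0$, whence $\norm{Tz - z}^2 \leq 0$ and thus $Tz = z$, i.e.\ $z \in \Fix T$.

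The main obstacle — indeed the only nontrivial point — is convexity, and it rests entirely on the displayed quadratic identity, which is the quantitative expression of the strict convexity of the Hilbert norm. I expect the bookkeeping to be routine once that identity is in place; the conceptual content is simply that in an inner-product space a point whose distances to $x$ and to $y$ are each no larger than its distances along the segment is pinned to the segment itself. (One could alternatively phrase this through equality in the triangle inequality $\norm{x-y} \leq \norm{Tz - x} + \norm{Tz - y}$ and the ensuing colinearity, but the purely algebraic identity above avoids the separate degenerate-case analysis.) It is worth flagging that this step is exactly where the argument would break down in a general, non-strictly-convex Banach space, so the Hilbert-space hypothesis is not merely cosmetic here.
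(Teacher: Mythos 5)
Your proof is correct: closedness follows from continuity exactly as you argue, and your convexity argument via the identity $\norm{Tz - z}^2 = (1-\lambda)\norm{Tz - x}^2 + \lambda\norm{Tz - y}^2 - \lambda(1-\lambda)\norm{x-y}^2$ combined with the two nonexpansiveness bounds is complete and airtight. Note that the paper itself offers no proof of this statement --- it is quoted as a Fact with a citation to Corollary~4.15 of Bauschke--Combettes --- and your argument (closedness from $1$-Lipschitz continuity, convexity from the quantitative strict convexity of the Hilbert norm, which indeed is exactly where a general Banach space would fail) is essentially the standard proof given in that reference, so there is nothing further to compare.
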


\begin{definition}\label{def:prox} Given 
  $f\in\Gamma_0(\mathcal{X})$ and $\gamma\in\RealPP$, the \textit{proximal}\/ mapping
  $\prox_{\gamma f}$ is defined as
  $\prox_{\gamma f} : \mathcal{X}\to \mathcal{X}: x\mapsto \Argmin\nolimits_{z\in \mathcal{X}}
  (\gamma f(z) + \tfrac{1}{2} \norm{x - z}^2)$.
\end{definition}

\begin{example}\label{ex:nonexp.maps}\mbox{}

  \begin{exlist}

  \item\label{ex:projection} \cite[Prop.~4.8, p.~61]{Bauschke.Combettes.book} Given a non-empty
    closed convex set $\mathcal{C}\subset\mathcal{X}$, the \textit{metric projection mapping onto
      $\mathcal{C}$,} defined as
    $P_{\mathcal{C}}: \mathcal{X}\to \mathcal{C}: x\mapsto P_{\mathcal{C}}x$, with
    $P_{\mathcal{C}}x$ being the unique minimizer of $\min_{z\in\mathcal{C}} \norm{x-z}$, is firmly
    nonexpansive with $\Fix P_{\mathcal{C}} = \mathcal{C}$.

  \item\label{ex:prox} \cite[Prop.~12.27, p.~176]{Bauschke.Combettes.book} Given
    $f\in\Gamma_0(\mathcal{X})$ and $\gamma\in\RealPP$, the proximal mapping $\prox_{\gamma f}$ is
    firmly nonexpansive with $\Fix \prox_{\gamma f} = \Argmin f$.

  \item\label{ex:firmly.nonexp} \cite[Prop.~4.2, p.~60]{Bauschke.Combettes.book} $T$ is firmly
    nonexpansive iff $\Id-T$ is firmly nonexpansive iff $T$ is $(1/2)$-averaged iff $2T-\Id$ is
    nonexpansive.

  \item\label{ex:convex.comb.maps} \cite[Prop.~2.2]{PLC.Isao.JMAA.15},
    \cite[Thm.~3(b)]{OguraYamadaNonStrictly}. Let $\Set{T_j}_{j=1}^J$ be a finite family
    ($J\in\IntegerPP$) of nonexpansive mappings from $\mathcal{X}$ to $\mathcal{X}$, and
    $\Set{\omega_j}_{j=1}^J$ be real numbers in $(0,1]$ s.t.\ $\sum_{j=1}^J \omega_j = 1$. Then,
    $T:= \sum_{j=1}^J \omega_j T_j$ is nonexpansive. If $\cap_{j=1}^J \Fix T_j\neq \emptyset$, then
    $\Fix T = \cap_{j=1}^J \Fix T_j$. Further, consider real numbers
    $\Set{\alpha_j}_{j=1}^J\subset (0,1)$ s.t.\ $T_j$ is $\alpha_j$-averaged, $\forall j$. Define
    $\alpha := \sum_{j=1}^J \omega_j\alpha_j$. Then, $T$ is $\alpha$-averaged. Hence, if each $T_j$
    is firmly nonexpansive, \ie, $(1/2)$-averaged, then $T$ is also firmly nonexpansive.

  \item\label{ex:composition.maps} \cite[Prop.~2.5]{PLC.Isao.JMAA.15},
    \cite[Thm.~3(b)]{OguraYamadaNonStrictly} Let $\Set{T_j}_{j=1}^J$ be a finite family
    ($J\in\IntegerPP$) of nonexpansive mappings from $\mathcal{X}$ to $\mathcal{X}$. Then, mapping
    $T:= T_1T_2\cdots T_J$ is nonexpansive. If $\cap_{j=1}^J \Fix T_j\neq \emptyset$, then
    $\Fix T = \cap_{j=1}^J \Fix T_j$. Further, consider real numbers
    $\Set{\alpha_j}_{j=1}^J\subset (0,1)$ s.t.\ $T_j$ is $\alpha_j$-averaged, $\forall j$. Define
    \begin{align*} \alpha := \frac{1}{1+ \frac{1}{\sum_{j=1}^J
          \frac{\alpha_j}{1-\alpha_j}}} \,.
    \end{align*}
    Then, $T$ is
    $\alpha$-averaged.

  \end{exlist}

\end{example}

In what follows, function $f\in\Gamma_0(\mathcal{X})$ is considered to have an $L$-Lipschitz
continuous $\nabla f$ with $\domain \nabla f = \mathcal{X}$. By \cite[Prop.~16.3(i),
p.~224]{Bauschke.Combettes.book}, the previous condition leads to $\domain f = \mathcal{X}$, which
further implies by \cite[Cor.~16.38(iii), p.~234]{Bauschke.Combettes.book} that
$\partial(f+g) = \nabla f + \partial g$.

\subsection{Affine nonexpansive mappings}\label{sec:AffineMaps}

\begin{definition}[\protect{\cite[p.~3]{Bauschke.Combettes.book}}] \label{def:affine.map} A mapping
  $T:\mathcal{X}\to\mathcal{X}$ is called \textit{affine}\/ if there exist a linear mapping
  $Q:\mathcal{X}\to\mathcal{X}$ and a $\pi\in\mathcal{X}$ s.t.\ $Tx = Qx + \pi$,
  $\forall x \in \mathcal{X}$.
\end{definition}

\begin{fact}[\protect{\cite[Ex.~4.4,
    p.~72]{Bauschke.Combettes.book}}]\label{fact:affine.nonexp.T}
  Consider the affine mapping $Tx = Qx +\pi$, $\forall x\in\mathcal{X}$, with $Q$ being linear and
  $\pi\in\mathcal{X}$. Then, $T$ is nonexpansive iff $\norm{Q}\leq 1$.
\end{fact}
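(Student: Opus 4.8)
The plan is to reduce the nonexpansiveness condition, which is a statement about the pair $(x,x')$, to a statement purely about the linear part $Q$, and then to recognize that condition as the definition of the operator norm bound. The whole argument rests on the fact that an affine map differs from its linear part only by a translation, and translations cancel in differences.

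First I would compute, for arbitrary $x, x' \in \mathcal{X}$,
\begin{align*}
  Tx - Tx' = (Qx + \pi) - (Qx' + \pi) = Q(x - x')\,,
\end{align*}
where the constant $\pi$ cancels and the last equality uses linearity of $Q$. Consequently $\norm{Tx - Tx'} = \norm{Q(x - x')}$, so the defining inequality $\norm{Tx - Tx'} \leq \norm{x - x'}$ becomes $\norm{Q(x - x')} \leq \norm{x - x'}$.

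Next I would make the substitution $u := x - x'$ and observe that, as $(x,x')$ ranges over $\mathcal{X}^2$, the difference $u$ ranges over all of $\mathcal{X}$ (for any $u\in\mathcal{X}$ one may take $x = u$, $x' = 0$). Hence nonexpansiveness of $T$ is \emph{equivalent} to $\norm{Qu} \leq \norm{u}$ for every $u \in \mathcal{X}$; no instance of $u$ is lost, which is the only point that needs to be stated carefully to secure both directions of the ``iff''. Finally, recalling the definition of the operator norm $\norm{Q} = \sup_{u\neq 0} \norm{Qu}/\norm{u}$, the condition $\norm{Qu} \leq \norm{u}$ for all $u$ (which holds trivially at $u = 0$ as well) is exactly $\norm{Q} \leq 1$, completing the equivalence.

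I do not expect any genuine obstacle here: the result is essentially immediate once the translation is cancelled. The only step that merits an explicit remark, rather than a calculation, is the surjectivity of $(x,x') \mapsto x - x'$ onto $\mathcal{X}$, which is what guarantees that the per-pair inequality is faithfully captured by a supremum over all directions $u$ and thus by the single scalar $\norm{Q}$.
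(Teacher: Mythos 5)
Your proof is correct and complete. The paper offers no proof of this statement at all---it is quoted as a known Fact from \cite[Ex.~4.4, p.~72]{Bauschke.Combettes.book}---and your argument (cancel the translation in the difference $Tx-Tx'$, substitute $u:=x-x'$, note that the difference map is onto $\mathcal{X}$ so both directions of the equivalence survive, then invoke the definition of the operator norm) is exactly the standard one underlying that reference.
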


Define now the following special class of affine-nonexpansive mappings:
\begin{align}\label{T.family}
  \mathfrak{T} := \Set*{T:\mathcal{X}\to
  \mathcal{X} 
  \given \begin{aligned}
    & Tx = Qx + \pi, \forall x\in\mathcal{X};\\
    & Q\in \mathfrak{B}(\mathcal{X}); \pi\in\mathcal{X};\\
    & \norm{Q}\leq 1, Q\ \text{is positive} \\
  \end{aligned}}\,.
\end{align}

As the following proposition highlights, $T$ is nothing but the class of affine firmly nonexpansive
mappings.

\begin{prop}\label{prop:ClassT.fne}
  $T\in \mathfrak{T}$ iff $T = Q + \pi$, where $Q\in\mathfrak{B}(\mathcal{X})$ is self-adjoint,
  $\pi\in\mathcal{X}$, and $T$ is firmly nonexpansive.
\end{prop}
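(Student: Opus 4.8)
The plan is to reduce the biconditional to a single statement about the linear part $Q$. Observe first that both descriptions of $T$ already stipulate that $T$ is affine with self-adjoint linear part $Q$: on the $\mathfrak{T}$ side, positivity of $Q$ includes self-adjointness by Definition~\ref{def:positive.op}, and on the other side self-adjointness is imposed explicitly. Hence, fixing $T = Q + \pi$ with $Q = Q^*$, the proposition collapses to the equivalence
\[
  \bigl(Q \text{ positive and } \norm{Q}\leq 1\bigr)
  \iff T \text{ is firmly nonexpansive}.
\]
Since $Tx - Tx' = Q(x-x')$ and $x-x'$ ranges over all of $\mathcal{X}$, Definition~\ref{def:fne} is in turn equivalent to the operator inequality
\[
  \norm{Qy}^2 \leq \innerp{y}{Qy}, \qquad \forall y\in\mathcal{X},
\]
which is what I would work with throughout.

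For the direction $(\Leftarrow)$, assume $T$ is firmly nonexpansive, i.e.\ the displayed inequality holds. Positivity is then immediate, because $\innerp{Qy}{y} = \innerp{y}{Qy} \geq \norm{Qy}^2 \geq 0$ (using self-adjointness to swap the arguments to match Definition~\ref{def:positive.op}). The bound $\norm{Q}\leq 1$ follows by controlling the right-hand side with Cauchy--Schwarz: $\norm{Qy}^2 \leq \innerp{y}{Qy} \leq \norm{y}\,\norm{Qy}$, so $\norm{Qy}\leq\norm{y}$ for every $y$, whence $\norm{Q}\leq 1$.

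For the direction $(\Rightarrow)$, assume $Q$ is positive with $\norm{Q}\leq 1$. The cleanest route is to use the positive square root $Q^{1/2}$, which exists because $Q$ is self-adjoint and positive: setting $z := Q^{1/2}y$, self-adjointness gives $\innerp{y}{Qy} = \innerp{Q^{1/2}y}{Q^{1/2}y} = \norm{z}^2$, while $\norm{Qy}^2 = \norm{Q^{1/2}z}^2 \leq \norm{Q^{1/2}}^2\norm{z}^2 = \norm{Q}\,\norm{z}^2 \leq \norm{z}^2$, which is exactly the firm-nonexpansiveness inequality. An equivalent and perhaps more transparent argument handles both directions simultaneously: by Example~\ref{ex:firmly.nonexp}, $T$ is firmly nonexpansive iff $2T-\Id$ is nonexpansive; since $2T-\Id$ is affine with self-adjoint linear part $2Q-\Id$, Fact~\ref{fact:affine.nonexp.T} makes this equivalent to $\norm{2Q-\Id}\leq 1$, and for a self-adjoint operator this spectral condition is precisely $0 \preceq Q \preceq \Id$, i.e.\ $Q$ positive with $\norm{Q}\leq 1$.

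The computations are elementary, so there is no genuine obstacle; the only point requiring care is the legitimacy of the spectral and square-root manipulations in a general (possibly infinite-dimensional) Hilbert space rather than for matrices. I would therefore justify $\norm{Q^{1/2}}^2 = \norm{Q}$ and the equivalence ``$\norm{2Q-\Id}\leq 1 \iff 0\preceq Q\preceq\Id$'' through the spectral theory of bounded self-adjoint operators (via the identity $\norm{Q}=\sup\{\abs{\lambda}:\lambda\in\spectrum Q\}$), rather than appealing to eigenvalue arguments valid only in finite dimensions.
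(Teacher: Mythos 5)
Your proof is correct and, in its main line, it is essentially the paper's own argument. For the direction $T\in\mathfrak{T}\Rightarrow$ firm nonexpansiveness, both you and the paper factor through the positive square root: your chain $\norm{Qy}^2 = \norm{Q^{1/2}(Q^{1/2}y)}^2 \leq \norm{Q}\,\norm{Q^{1/2}y}^2 \leq \norm{Q^{1/2}y}^2 = \innerp{y}{Qy}$ is exactly the paper's displayed computation, the only cosmetic difference being that the paper justifies the key inequality by the variational characterization of $\norm{Q}$ for self-adjoint operators (Kreyszig, Thm.~9.2-2), whereas you invoke the identity $\norm{Q^{1/2}}^2 = \norm{Q}$; both are legitimate in infinite dimensions, as you note. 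For the converse, positivity of $Q$ is extracted from the firm-nonexpansiveness inequality exactly as in the paper, and your Cauchy--Schwarz step for $\norm{Q}\leq 1$ simply inlines what the paper obtains by citing ``firmly nonexpansive $\Rightarrow$ nonexpansive'' together with Fact~\ref{fact:affine.nonexp.T}. Your appended alternative is the one genuinely different ingredient: identifying firm nonexpansiveness of $T$ with nonexpansiveness of $2T-\Id$ via \cref{ex:firmly.nonexp}, hence with $\norm{2Q-\Id}\leq 1$ via \cref{fact:affine.nonexp.T}, hence with the spectral condition $0\preceq Q\preceq\Id$. That route dispatches both directions at once and needs no square root at all, but it rests on the spectral characterization of the norm of a bounded self-adjoint operator, a tool the paper's more elementary inner-product computation deliberately avoids; either justification is acceptable.
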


\begin{proof}
  First, consider $T\in \mathfrak{T}$. Since $Q$ is positive, let $Q^{1/2}$ be the \textit{positive
    square root}\/ of $Q$, \ie, the (unique) positive operator which satisfies $Q^{1/2} Q^{1/2} =
  Q$~\cite[Thm.~9.4-2, p.~476]{Kreyszig}. The positivity of $Q$ yields
  $\norm{Q} = \sup_{x\in\mathcal{X} \setminus\Set{0}} |\innerp{Qx}{x}|/\innerp{x}{x} =
  \sup_{x\in\mathcal{X} \setminus\Set{0}} \innerp{Qx}{x}/\innerp{x}{x}$, according to \cite[Thm.~9.2-2,
  p.~466]{Kreyszig}. Then, $\forall (x,x')\in \mathcal{X}^2$,
  \begin{align*}
    \norm{Tx-Tx'}^2
    & = \norm{Qx-Qx'}^2 = \norm{Q(x-x')}^2 = \innerp{Q(x-x')}{Q(x-x')} \\
    & = \innerp{Q^{1/2}(x-x')}{QQ^{1/2}(x-x')} \leq \norm{Q} \innerp{Q^{1/2}(x-x')}{Q^{1/2}(x-x')}
    \\
    & \leq \innerp{Q^{1/2}(x-x')}{Q^{1/2}(x-x')} = \innerp{x-x'}{Q(x-x')} \\
    & = \innerp{x-x'}{Tx-Tx')}\,,
  \end{align*}
  which suggests that $T$ is firmly nonexpansive. 

  Now, let $T = Q + \pi$, for a self-adjoint $Q\in\mathfrak{B}(\mathcal{X})$, $\pi\in\mathcal{X}$.
  Let also $T$ be firmly nonexpansive. Then, $\forall x\in\mathcal{X}$,
  $\innerp{x}{Qx} = \innerp{x-0}{Q(x-0)} = \innerp{x-0}{Tx-T0)} \geq \norm{Tx-T0}^2 \geq 0$; thus
  $Q$ is positive. By the fact that a firmly nonexpansive mapping is nonexpansive [\cref{def:fne}]
  and \cref{fact:affine.nonexp.T}, $\norm{Q}\leq 1$. In summary, $T\in\mathfrak{T}$.
\end{proof}

\begin{prop}\label{prop:affine.maps} Let $J\in\IntegerPP$.

  \begin{proplist}

  \item\label{prop:convex.comb.affine} Consider a family $\{T_j\}_{j=1}^J$ of members of
    $\mathfrak{T}$. For any set of weights $\Set{\omega_j}_{j=1}^J$ s.t.\ $\omega_j\in (0,1]$ and
    $\sum_{j=1}^J \omega_j = 1$, mapping $\sum_{j=1}^J \omega_jT_jx \in\mathfrak{T}$.

  \item\label{prop:compose.affine} Consider $T_0:= Q_0 + \pi_0 \in\mathfrak{T}$. Moreover, let the
    self adjoint $Q_j\in \mathfrak{B}(\mathcal{X})$, with $\norm{Q_j}\leq 1$, and
    $\pi_j\in\mathcal{X}$, $\forall j\in\{1, \ldots, J\}$. Let now the family
    $\{T_j:= Q_j + \pi_j\}_{j=1}^J$ of affine nonexpansive mappings, where each $T_j$ does not
    necessarily belong to $\mathfrak{T}$, \ie, $\{Q_j\}_{j=1}^J$ might not be positive according to
    \cref{prop:ClassT.fne}. Then, the composition
    \begin{alignat*}{2}
      T_JT_{J-1}\cdots T_{1}T_0T_{1}\cdots T_{J-1}T_Jx
      && \,\mathbin{=}\, & Q_J Q_{J-1} \cdots Q_{1} Q_0 Q_{1} \cdots Q_{J-1} Q_J x \\
      &&& + \sum\nolimits_{j=1}^J Q_J Q_{J-1}\cdots Q_{1} Q_0 Q_{1}\cdots Q_{j-1} \pi_j\\
      &&& + \sum\nolimits_{j=1}^J Q_J Q_{J-1}\cdots Q_j \pi_{j-1} + \pi_J\,,
      \quad\forall x \in\mathcal{X}\,,
    \end{alignat*}
    satisfies
    $T_JT_{J-1}\cdots T_{1}T_0T_{1}\cdots T_{J-1}T_J \in \mathfrak{T}$.

  \end{proplist}
\end{prop}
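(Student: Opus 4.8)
The plan is to exploit the fact that membership in $\mathfrak{T}$ is a condition on the linear part alone: by definition \eqref{T.family}, an affine map $x\mapsto Qx+\pi$ lies in $\mathfrak{T}$ precisely when $Q\in\mathfrak{B}(\mathcal{X})$ is positive (hence self-adjoint, per \cref{def:positive.op}) with $\norm{Q}\leq 1$, while the translation $\pi\in\mathcal{X}$ is unconstrained. So in both parts I would simply read off the linear part of the resulting affine map and verify three properties: boundedness, positivity, and operator norm at most one.

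For \cref{prop:convex.comb.affine}, write $T_jx = Q_jx + \pi_j$ with each $Q_j$ positive and $\norm{Q_j}\leq 1$. The linear part of $\sum_{j=1}^J \omega_j T_j$ is $Q := \sum_{j=1}^J \omega_j Q_j$. Self-adjointness is inherited, since a real-linear combination of self-adjoint operators is self-adjoint; positivity follows from $\innerp{Qx}{x} = \sum_{j=1}^J \omega_j \innerp{Q_jx}{x} \geq 0$, because each $\omega_j>0$ and each $Q_j$ is positive; and the triangle inequality gives $\norm{Q}\leq \sum_{j=1}^J \omega_j\norm{Q_j}\leq \sum_{j=1}^J\omega_j = 1$. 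Hence $\sum_{j=1}^J\omega_j T_j\in\mathfrak{T}$. (Alternatively one may invoke \cref{ex:convex.comb.maps}, since each $T_j\in\mathfrak{T}$ is firmly nonexpansive by \cref{prop:ClassT.fne}.)

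The substance is in \cref{prop:compose.affine}, and the key is to recognize the palindromic structure of the composition. Set $P := Q_1 Q_2\cdots Q_{J-1}Q_J$. Since each $Q_j$ is self-adjoint, $P^* = Q_J Q_{J-1}\cdots Q_1$, which is exactly the linear part of the left half $T_J\cdots T_1$. Therefore the linear part $\tilde{Q}$ of the whole composition $T_J\cdots T_1 T_0 T_1\cdots T_J$ factors as $\tilde{Q} = P^* Q_0 P$. With this factorization the three properties are immediate: $\tilde{Q}^* = P^* Q_0^* P = P^* Q_0 P = \tilde{Q}$ because $Q_0$ is self-adjoint; $\innerp{\tilde{Q}x}{x} = \innerp{Q_0(Px)}{Px}\geq 0$ by positivity of $Q_0$, so $\tilde{Q}$ is positive; and $\norm{\tilde{Q}}\leq \norm{P}^2\norm{Q_0}\leq 1$, using submultiplicativity $\norm{P}\leq\prod_{j=1}^J\norm{Q_j}\leq 1$ together with $\norm{Q_0}\leq 1$. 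Since the composition is affine with $\tilde{Q}$ as its linear part, it lies in $\mathfrak{T}$.

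The main obstacle is purely bookkeeping: correctly matching the reversed index order of the left half to $P^*$ via self-adjointness of the individual $Q_j$. Once the factorization $\tilde{Q} = P^* Q_0 P$ is in hand, positivity---the one property that could plausibly fail for a general composition, and precisely the reason each intermediate $T_j$ need not itself belong to $\mathfrak{T}$---drops out at once from $\innerp{Q_0(Px)}{Px}\geq 0$. The explicit formula for the translation part stated in the proposition is not needed for the membership claim, although it can be confirmed by expanding the affine composition directly.
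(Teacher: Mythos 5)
Your proposal is correct and follows essentially the same route as the paper: for \cref{prop:compose.affine}, your factorization $\tilde{Q}=P^*Q_0P$ with $P:=Q_1\cdots Q_J$ is precisely the paper's argument (positivity via $\innerp{Q_0(Q_1\cdots Q_Jx)}{Q_1\cdots Q_Jx}\geq 0$ and the norm bound $\norm{Q_0}\prod_{j=1}^J\norm{Q_j}^2\leq 1$), and your direct verification in \cref{prop:convex.comb.affine} is an equivalent substitute for the paper's appeal to \cref{ex:convex.comb.maps} and \cref{prop:ClassT.fne}, which you also mention. Both you and the paper treat the explicit displayed formula as routine bookkeeping (the paper by induction on $J$, you by direct expansion), so no gap there.
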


\begin{proof} The proof of \cref{prop:convex.comb.affine} follows easily from
  \cref{ex:convex.comb.maps} and \cref{prop:ClassT.fne}. The formula appearing in
  \cref{prop:compose.affine} can be deduced by mathematical induction on $J$. Further,
  $Q_J Q_{J-1} \cdots Q_{1} Q_0 Q_{1} \cdots Q_{J-1} Q_J$ is self adjoint, and its positivity
  follows from the fundamental observation that $\forall x\in\mathcal{X}$,
  $\innerp{Q_J Q_{J-1} \cdots Q_{1} Q_0 Q_{1} \cdots Q_{J-1} Q_Jx}{x} = \innerp{Q_0 (Q_{1} \cdots
    Q_{J-1} Q_Jx)}{Q_{1} \cdots Q_{J-1} Q_Jx}\geq 0$, due to the positivity of $Q_0$. Finally, the
  claim of \cref{prop:compose.affine} is established by
  $\norm{Q_J\cdots Q_{1} Q_0 Q_{1}\cdots Q_J} \leq \norm{Q_0}\prod_{j=1}^J \norm{Q_j}^2 \leq 1$.
\end{proof}

\begin{prop}\label{prop:T(A).family}
  Given the closed affine set $\mathcal{A}\subset \mathcal{X}$,
  define the following family of mappings:
  \begin{align}\label{T(A).family}
    \mathfrak{T}_{\mathcal{A}} := \Set*{T\in\mathfrak{T}
    \given \Fix T = \mathcal{A}}\,.
  \end{align}
  Then, $\mathfrak{T}_{\mathcal{A}}$ is non-empty.
\end{prop}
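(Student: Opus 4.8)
The plan is to exhibit an explicit member of $\mathfrak{T}_{\mathcal{A}}$, the obvious candidate being the metric projection $P_{\mathcal{A}}$ onto $\mathcal{A}$. By \cref{ex:projection}, $P_{\mathcal{A}}$ is firmly nonexpansive with $\Fix P_{\mathcal{A}} = \mathcal{A}$, so the fixed-point requirement in \eqref{T(A).family} is handed to us for free. The only thing left to show is that $P_{\mathcal{A}}$ actually lies in the class $\mathfrak{T}$, i.e., that it is affine with a positive, self-adjoint linear part of norm at most $1$.

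First I would use the structure of a closed affine set: write $\mathcal{A} = a + V$, where $a\in\mathcal{A}$ is arbitrary but fixed and $V := \mathcal{A} - a$ is a closed linear subspace of $\mathcal{X}$. The key computation is the well-known decomposition of the projection onto an affine set in terms of the projection onto its direction subspace, namely $P_{\mathcal{A}}x = a + P_V(x - a) = P_V x + (\Id - P_V)a$ for all $x\in\mathcal{X}$. This displays $P_{\mathcal{A}}$ in the affine form $P_{\mathcal{A}} = Q + \pi$ of \cref{def:affine.map}, with linear part $Q := P_V$ the orthogonal projection onto $V$ and translation $\pi := (\Id - P_V)a \in \mathcal{X}$.

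It then remains to check that $Q = P_V$ meets the conditions defining $\mathfrak{T}$. The orthogonal projection onto a closed subspace is linear, bounded, idempotent, and self-adjoint; its positivity follows from $\innerp{P_V x}{x} = \innerp{P_V x}{P_V x} = \norm{P_V x}^2 \geq 0$ for every $x\in\mathcal{X}$ (using self-adjointness and $P_V^2 = P_V$), and clearly $\norm{P_V}\leq 1$. Hence $P_{\mathcal{A}} = P_V + \pi \in \mathfrak{T}$, and combined with $\Fix P_{\mathcal{A}} = \mathcal{A}$ this gives $P_{\mathcal{A}} \in \mathfrak{T}_{\mathcal{A}}$, proving the set is non-empty. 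Equivalently, once $P_{\mathcal{A}}$ is known to be affine with self-adjoint linear part and firmly nonexpansive, \cref{prop:ClassT.fne} yields $P_{\mathcal{A}}\in\mathfrak{T}$ directly, bypassing the explicit verification of positivity and norm.

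The main obstacle is not conceptual but lies in justifying the affine decomposition $P_{\mathcal{A}}x = a + P_V(x-a)$ and, in particular, that the linear part is precisely the orthogonal projection $P_V$ (hence self-adjoint). Everything else is routine, and once the self-adjointness of the linear part is in hand, the appeal to \cref{prop:ClassT.fne} or to the elementary estimates above closes the argument.
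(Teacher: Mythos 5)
Your proposal is correct and takes essentially the same route as the paper's own proof: both exhibit the metric projection $P_{\mathcal{A}}$ as the witness, combining its firm nonexpansiveness and $\Fix P_{\mathcal{A}} = \mathcal{A}$ (\cref{ex:projection}) with its affineness to conclude $P_{\mathcal{A}}\in\mathfrak{T}$ via \cref{prop:ClassT.fne}. The only difference is one of detail: you derive the decomposition $P_{\mathcal{A}} = P_V + (\Id - P_V)a$ and verify the self-adjointness and positivity of $P_V$ by hand, whereas the paper outsources the affineness to a citation and leaves those checks implicit.
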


\begin{proof}
  The metric projection mapping $P_{\mathcal{A}}$ onto $\mathcal{A}$ is not only firmly nonexpansive
  with $\Fix P_{\mathcal{A}} = \mathcal{A}$ [\cf~\cref{ex:projection}], but also affine, according
  also to \cite[Cor.~3.20(ii), p.~48]{Bauschke.Combettes.book}. Hence, by virtue of
  \cref{prop:ClassT.fne}, $P_{\mathcal{A}}\in \mathfrak{T}_{\mathcal{A}}\neq \emptyset$.
\end{proof}

It can be verified that the fixed-point set $\Fix T$ of an affine mapping $T$ is affine. However,
more can be said about the members of $\mathfrak{T}_{\mathcal{A}}$.

\begin{prop}\label{prop:FixT} For any $T\in
  \mathfrak{T}_{\mathcal{A}}$,
  \begin{align*} 
    \mathcal{A} = \Fix T = \ker(\Id-Q) + w_* = \ker U + w_* \,,
  \end{align*}
  where $w_*$ is any vector of $\mathcal{A}$, and $U$ is the positive square root of $\Id-Q$, \ie,
  the (unique) positive operator which satisfies $U^2 = \Id - Q$~\cite[Thm.~9.4-2,
  p.~476]{Kreyszig}.
\end{prop}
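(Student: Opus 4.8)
The plan is to split the asserted chain of equalities into three links and verify each one. The first equality $\mathcal{A} = \Fix T$ holds by definition, since $T \in \mathfrak{T}_{\mathcal{A}}$ means precisely $\Fix T = \mathcal{A}$. The remaining two equalities, $\Fix T = \ker(\Id - Q) + w_*$ and $\ker(\Id - Q) = \ker U$, are of different natures: the former is a purely affine-algebraic fact about the solution set of a linear equation, whereas the latter exploits the self-adjointness and positivity underlying the square root $U$.

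First I would establish $\Fix T = \ker(\Id - Q) + w_*$. Writing $Tx = Qx + \pi$, a point $x$ lies in $\Fix T$ exactly when $(\Id - Q)x = \pi$. Fixing any $w_* \in \mathcal{A} = \Fix T$, one has $(\Id - Q)w_* = \pi$, so by subtraction $x \in \Fix T$ iff $(\Id - Q)(x - w_*) = 0$, i.e. iff $x - w_* \in \ker(\Id - Q)$. This yields $\Fix T = \ker(\Id - Q) + w_*$ for every choice of $w_* \in \mathcal{A}$.

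Next I would confirm that $\Id - Q$ is positive, so that its positive square root $U$ is well defined via the cited theorem \cite[Thm.~9.4-2, p.~476]{Kreyszig}. Since $Q$ is positive with $\norm{Q} \le 1$, and for a positive self-adjoint operator the characterization $\norm{Q} = \sup_{x \neq 0} \innerp{Qx}{x}/\innerp{x}{x}$ already used above gives $\innerp{Qx}{x} \le \norm{Q}\norm{x}^2$, one obtains $\innerp{(\Id - Q)x}{x} = \norm{x}^2 - \innerp{Qx}{x} \ge (1 - \norm{Q})\norm{x}^2 \ge 0$. With $U$ in hand, the final equality $\ker(\Id - Q) = \ker U$ reduces to $\ker U^2 = \ker U$: the inclusion $\ker U \subseteq \ker U^2$ is immediate, and conversely, if $U^2 x = 0$ then, using that $U$ is self-adjoint, $\norm{Ux}^2 = \innerp{Ux}{Ux} = \innerp{U^2 x}{x} = 0$, so $Ux = 0$. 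Substituting $U^2 = \Id - Q$ closes the chain.

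Every step here is elementary, so I do not expect a genuine obstacle; the only point requiring attention is verifying the positivity of $\Id - Q$, which both guarantees that the positive square root $U$ exists (making the statement meaningful) and supplies the self-adjointness needed for the key identity $\ker U^2 = \ker U$.
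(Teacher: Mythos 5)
Your proposal is correct and follows essentially the same route as the paper's own proof: the identical affine-algebraic reduction $\Fix T = \ker(\Id-Q) + w_*$ via $(\Id-Q)w_* = \pi$, the same positivity bound $\innerp{(\Id-Q)x}{x} \geq (1-\norm{Q})\norm{x}^2 \geq 0$ from \cite[Thm.~9.2-2]{Kreyszig}, and the same self-adjointness argument $\norm{Ux}^2 = \innerp{U^2x}{x} = 0$ to get $\ker U^2 = \ker U$. The only differences are cosmetic ordering of the steps.
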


\begin{proof}
  Since
  $\norm{Q} = \sup_{x\in\mathcal{X} \setminus\Set{0}} |\innerp{Qx}{x}|/\norm{x}^2$~\cite[Thm.~9.2-2,
  p.~466]{Kreyszig} and $\norm{Q}\leq 1$, it can be easily verified that $\forall x\in \mathcal{X}$,
  $\innerp{(\Id-Q)x}{x} = \norm{x}^2 - \innerp{Qx}{x} \geq \norm{x}^2 - \norm{Q} \cdot
  \norm{x}^2\geq \norm{x}^2 - \norm{x}^2=0$, \ie, $\Id-Q$ is positive. Interestingly, the
  positivity of $Q$ suggests that $\forall x\in \mathcal{X}$,
  $\innerp{(\Id-Q)x}{x} = \norm{x}^2 - \innerp{Qx}{x}\leq \norm{x}^2$, which implies, via
  \cite[Thm.~9.2-2, p.~466]{Kreyszig}, that $\norm{\Id-Q} \leq 1$. Moreover, by the definition of
  $T$, it follows that for any arbitrarily fixed $w_*\in \Fix T$,
  \begin{align*}
    \Fix T
    & = \Set*{x \given Tx = x} = \Set*{x \given (\Id-T)x = 0} \\
    & = \Set*{x \given (\Id - Q) x =
      \pi} = \Set*{x \given (\Id - Q)x = (\Id - Q)w_*} \\
    & = \Set*{x \given
      (\Id - Q)(x-w_*)= 0} = \Set*{x' + w_* \given (\Id - Q) x' = 
      0} \\
    & = \ker(\Id-Q) + w_* \,.
  \end{align*}
  Finally, the characterization $\Fix T = \ker U + w_*$ follows from the previous arguments and
  $x'\in\ker(\Id-Q) \Leftrightarrow (\Id - Q)x' = 0 \Rightarrow U^2x' = 0 \Rightarrow U^*Ux' = 0
  \Rightarrow \innerp{x'}{U^*Ux'} = \innerp{Ux'}{Ux'} = \norm{Ux'}^2 = 0 \Rightarrow Ux' = 0
  \Leftrightarrow x'\in\ker U \Rightarrow U^2x' = 0 \Rightarrow (\Id - Q)x' = 0 \Leftrightarrow
  x'\in\ker(\Id-Q)$, which establishes $\ker(\Id-Q) = \ker U$.
\end{proof}

Several examples of $\mathfrak{T}_{\mathcal{A}}$ members playing important roles in convex
minimization tasks can be found in \cref{sec:appendix}.

\subsection{Variational-inequality problems}\label{sec:VIP}

\begin{definition}[Variational-inequality problem]\label{def:VIP}
  For a nonexpansive mapping $T:\mathcal{X}\to\mathcal{X}$, point $x_*\in\Fix T$ is said to solve
  the variational-inequality problem $\text{VIP}(\nabla f + \partial g, \Fix T)$ if there exists
  $\xi_*\in \partial g(x_*)$ s.t.\ $\forall y\in \Fix T$,
  $\innerp{y- x_*}{\nabla f(x_*) + \xi_*} \geq 0$.
\end{definition}

\begin{fact}[\protect{\cite[Prop.~26.5(vi),
    p.~383]{Bauschke.Combettes.book}}]
  Consider a mapping $T\in\mathfrak{T}_{\mathcal{A}}$ (recall $\Fix T = \mathcal{A}$), and assume
  that one of the following holds:
  \begin{enumerate}
  \item $0\in \sri (\mathcal{A}-\domain (f+g))$ (\cf\cite[Prop.~6.19, p.~95]{Bauschke.Combettes.book}
    for special cases);
  \item $\mathcal{X}$ is Euclidean and $\mathcal{A} \cap \ri\,
    [\domain(f+g)] \neq \emptyset$.
  \end{enumerate}
  Then, point $x_*$ solves $\text{VIP}(\nabla f + \partial g, \Fix T)$ iff
  $x_* \in \Argmin_{x\in\Fix T} [f(x) + g(x)]$.
\end{fact}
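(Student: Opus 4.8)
The plan is to read the variational inequality as the first-order optimality condition for the constrained problem $\min_{x\in\mathcal{A}}(f+g)(x)$, rewritten as the unconstrained problem $\min_{x\in\mathcal{X}}[(f+g)(x)+\iota_{\mathcal{A}}(x)]$ with $\iota_{\mathcal{A}}$ the indicator of $\mathcal{A}=\Fix T$. By Fermat's rule, $x_*\in\Argmin_{x\in\Fix T}[f(x)+g(x)]$ iff $0\in\partial[(f+g)+\iota_{\mathcal{A}}](x_*)$. Since $\domain\nabla f=\mathcal{X}$, the excerpt already records $\partial(f+g)=\nabla f+\partial g$; the role of the qualification hypotheses (the $\sri$ condition, or the $\ri$ condition in the Euclidean case) is to license the further splitting $\partial[(f+g)+\iota_{\mathcal{A}}]=\nabla f+\partial g+\partial\iota_{\mathcal{A}}$, so that $x_*$ is a minimizer iff there is $\xi_*\in\partial g(x_*)$ with $-(\nabla f(x_*)+\xi_*)\in\partial\iota_{\mathcal{A}}(x_*)$, i.e.\ in the normal cone $N_{\mathcal{A}}(x_*)$.

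The implication VIP $\Rightarrow$ minimizer needs no qualification and I would dispatch it first. If $x_*\in\Fix T$ and $\xi_*\in\partial g(x_*)$ satisfy $\innerp{y-x_*}{\nabla f(x_*)+\xi_*}\geq 0$ for all $y\in\Fix T=\mathcal{A}$, then combining the gradient inequality $f(y)\geq f(x_*)+\innerp{y-x_*}{\nabla f(x_*)}$ with the subgradient inequality $g(y)\geq g(x_*)+\innerp{y-x_*}{\xi_*}$ gives $(f+g)(y)\geq (f+g)(x_*)$ for every $y\in\mathcal{A}$, hence $x_*$ minimizes $f+g$ over $\mathcal{A}$.

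For the converse I would exploit the affine structure of $\mathcal{A}$. By \cref{prop:FixT}, $\mathcal{A}=V+w_*$ with $V:=\ker(\Id-Q)$ a closed subspace, so for $x_*\in\mathcal{A}$ the normal cone is $N_{\mathcal{A}}(x_*)=V^{\perp}$. Thus Fermat plus the sum rule yields $\xi_*\in\partial g(x_*)$ with $\nabla f(x_*)+\xi_*\in V^{\perp}$ (using that $V^{\perp}$ is a subspace, hence closed under negation). To recover the VIP, observe that as $y$ ranges over $\mathcal{A}=V+w_*$ the difference $y-x_*$ ranges over all of $V$; since $\nabla f(x_*)+\xi_*\perp V$, the inequality $\innerp{y-x_*}{\nabla f(x_*)+\xi_*}\geq 0$ holds (in fact with equality) for every $y\in\Fix T$, so $x_*$ solves $\text{VIP}(\nabla f+\partial g,\Fix T)$. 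The same observation run backwards shows that the VIP inequality over the subspace $V$ forces $\nabla f(x_*)+\xi_*\in V^{\perp}$, confirming that the two conditions are literally the same once the affine geometry is exposed.

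The main obstacle is the subdifferential sum rule, which is exactly where the qualification hypotheses enter: without them one only has the inclusion $\nabla f+\partial g+\partial\iota_{\mathcal{A}}\subseteq\partial[(f+g)+\iota_{\mathcal{A}}]$, which suffices for the easy direction but not for producing the dual certificate $\xi_*$ in the converse. Establishing that the $\sri$ (resp.\ $\ri$) condition validates the equality of subdifferentials---essentially a Fenchel--Rockafellar/separation argument---is the substantive content, and it is precisely this that \cite[Prop.~26.5(vi)]{Bauschke.Combettes.book} supplies.
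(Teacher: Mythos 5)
The paper contains no proof of this statement for you to be compared against: it is stated as a \emph{Fact} and imported verbatim from \cite[Prop.~26.5(vi), p.~383]{Bauschke.Combettes.book}. That said, your reconstruction is correct and is essentially the standard argument underlying the cited result. The implication ``VIP $\Rightarrow$ minimizer'' is exactly as you write it: adding the gradient inequality for $f$ to the subgradient inequality for $g$ needs no qualification hypothesis. For the converse, recasting the constrained problem as minimization of $(f+g)+\iota_{\mathcal{A}}$, applying Fermat's rule, and splitting $\partial[(f+g)+\iota_{\mathcal{A}}]=\nabla f+\partial g+N_{\mathcal{A}}$ is indeed precisely where the $\sri$ (resp.\ $\ri$) hypothesis enters; and your geometric finish is sound, since by \cref{prop:FixT} one has $\mathcal{A}=V+w_*$ with $V=\ker(\Id-Q)$ a closed subspace, whence $N_{\mathcal{A}}(x_*)=V^{\perp}$, and an inequality $\innerp{v}{\nabla f(x_*)+\xi_*}\geq 0$ holding for every $v$ in a subspace forces equality --- the same device the paper itself uses in the proof of \cref{prop:O*}, \cf~\eqref{lin.vector.space.ineq}. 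The one caveat is that the genuinely hard ingredient, the validity of the subdifferential sum rule under the stated qualification conditions, is deferred by you to the literature rather than proven (it is an Attouch--Br\'{e}zis/Fenchel--Rockafellar-type separation argument, \cf~\cite[Cor.~16.38]{Bauschke.Combettes.book} in the Hilbertian case and Rockafellar's relative-interior condition in the Euclidean case). Since the paper defers the entire Fact to the same source, this is a fair division of labor, but your write-up should make explicit that the proof is a \emph{reduction} to that sum rule, which is the load-bearing step, rather than a self-contained derivation.
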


\begin{prop}\label{prop:O*}
  \begin{subequations}
    Given the closed affine set $\mathcal{A}\subset \mathcal{X}$, consider any
    $T\in \mathfrak{T}_{\mathcal{A}}$ (\cf~\cref{prop:FixT}). If $U$ stands for the square root of
    the linear operator $\Id-Q$ in the description of $T$ (\cf~\cref{def:affine.map}), let
    $\mathop{\overline{\range}} U$ denote the closure (in the strong topology) of the range of
    $U$. Then,
    \begin{align}
      & x_*\ \text{solves}\ \text{VIP}(\nabla f + \partial g, \Fix
      T) \notag\\
      & \qquad \Leftrightarrow 
        x_*\in \mathcal{A}_* := \Set*{x\in\Fix T \given \left[\nabla
      f(x) + \partial g(x) \right] \cap \mathop{\overline{\range}} U \neq
        \emptyset}\,. \label{solve.VIP.inf.dim}
    \end{align}
    Moreover, for an arbitrarily fixed $\lambda\in\Real\setminus\Set{0}$, define the subset
    \begin{align}
      \Upsilon_*^{(\lambda)} := 
      \Set*{(x,v) \in\Fix T\times \mathcal{X}\given -
      \tfrac{1}{\lambda} Uv \in \nabla f(x) + \partial
      g(x)}\,. \label{O.lambda}
    \end{align}
    Then,
    \begin{align}
      (x_*, v_*)\in \Upsilon_*^{(\lambda)} \Rightarrow x_*\
      \text{solves}\ \text{VIP}(\nabla f + \partial g, \Fix T)
      \,. \label{O.lambda.means.Gamma*}
    \end{align}
    Further, in the case where $\mathcal{X}$ is finite dimensional,
    \begin{align}
      \vect{x}_*\ \text{solves}\
      \text{VIP}(\nabla f + \partial g, \Fix T)
      \Leftrightarrow 
      \exists\vect{v}_*\in\mathcal{X}\ \text{s.t.}\
      (\vect{x}_*, \vect{v}_*)\in \Upsilon_*^{(\lambda)} \,. \label{O.lambda.finite.dim}
      \end{align}
  \end{subequations}
\end{prop}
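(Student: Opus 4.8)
The plan is to reduce the variational inequality over the affine set $\Fix T$ to a single orthogonality condition, exploiting the description of $\Fix T$ furnished by \cref{prop:FixT}. First I would fix $x_* \in \Fix T = \mathcal{A}$ and, since \cref{prop:FixT} permits $w_*$ to be \emph{any} point of $\mathcal{A}$, take $w_* = x_*$ to obtain $\Fix T = \ker U + x_*$, whence $\Fix T - x_* = \ker U$. Consequently the defining inequality of \cref{def:VIP}, namely $\innerp{y - x_*}{\nabla f(x_*) + \xi_*} \geq 0$ for all $y \in \Fix T$, is equivalent to $\innerp{z}{\nabla f(x_*) + \xi_*} \geq 0$ for all $z \in \ker U$. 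Because $\ker U$ is a linear subspace, both $z$ and $-z$ lie in it, so the one-sided inequality forces $\innerp{z}{\nabla f(x_*) + \xi_*} = 0$ for every $z \in \ker U$; that is, $\nabla f(x_*) + \xi_* \in (\ker U)^{\perp}$.

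Next I would invoke the standard functional-analytic identity $(\ker U)^{\perp} = \mathop{\overline{\range}} U^* = \mathop{\overline{\range}} U$, valid for any $U \in \mathfrak{B}(\mathcal{X})$ and here simplified by the self-adjointness of $U$ (being the positive square root of $\Id - Q$, it is positive, hence self-adjoint). Combining this with the previous paragraph, $x_*$ solves $\text{VIP}(\nabla f + \partial g, \Fix T)$ iff there exists $\xi_* \in \partial g(x_*)$ with $\nabla f(x_*) + \xi_* \in \mathop{\overline{\range}} U$, which is precisely the condition $[\nabla f(x_*) + \partial g(x_*)] \cap \mathop{\overline{\range}} U \neq \emptyset$ defining $\mathcal{A}_*$. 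This establishes \eqref{solve.VIP.inf.dim}.

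For \eqref{O.lambda.means.Gamma*}, suppose $(x_*, v_*) \in \Upsilon_*^{(\lambda)}$, so that $-\tfrac{1}{\lambda} U v_* \in \nabla f(x_*) + \partial g(x_*)$. Since $-\tfrac{1}{\lambda} U v_* = U(-\tfrac{1}{\lambda} v_*) \in \range U \subseteq \mathop{\overline{\range}} U$, the intersection $[\nabla f(x_*) + \partial g(x_*)] \cap \mathop{\overline{\range}} U$ is non-empty, so $x_* \in \mathcal{A}_*$ and \eqref{solve.VIP.inf.dim} gives the claim. For the finite-dimensional equivalence \eqref{O.lambda.finite.dim}, the key point is that $\mathop{\overline{\range}} U = \range U$ when $\mathcal{X}$ is finite dimensional, every subspace then being closed. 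Thus if $\vect{x}_*$ solves the VIP, \eqref{solve.VIP.inf.dim} yields $\xi_* \in \partial g(\vect{x}_*)$ and $u \in \mathcal{X}$ with $\nabla f(\vect{x}_*) + \xi_* = Uu$; setting $\vect{v}_* := -\lambda u$ gives $-\tfrac{1}{\lambda} U \vect{v}_* = Uu \in \nabla f(\vect{x}_*) + \partial g(\vect{x}_*)$, i.e.\ $(\vect{x}_*, \vect{v}_*) \in \Upsilon_*^{(\lambda)}$; the reverse implication is exactly \eqref{O.lambda.means.Gamma*}.

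The main obstacle is conceptual rather than computational: correctly handling the distinction between $\range U$ and its closure $\mathop{\overline{\range}} U$. This distinction is precisely what restricts the lifting to the dual variable $v$ to the one-way implication \eqref{O.lambda.means.Gamma*} in the general Hilbert-space setting, while the finite-dimensional case closes the gap in \eqref{O.lambda.finite.dim} because ranges of operators are automatically closed there. The only external input beyond \cref{prop:FixT} is the identity $(\ker U)^{\perp} = \mathop{\overline{\range}} U$, which I would cite from standard references (e.g.\ \cite{Kreyszig}).
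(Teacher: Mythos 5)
Your proposal is correct and follows essentially the same route as the paper's proof: translating the variational inequality over $\Fix T$ into an orthogonality condition on $\ker U$ via \cref{prop:FixT}, invoking $(\ker U)^{\perp} = \mathop{\overline{\range}} U$, observing that $Uv_* \in \range U \subseteq \mathop{\overline{\range}} U$ yields only the one-way implication \eqref{O.lambda.means.Gamma*}, and closing the gap in finite dimensions where $\range U$ is automatically closed. The only cosmetic differences are that you fix $w_* = x_*$ explicitly and construct $\vect{v}_*$ concretely in the finite-dimensional case, neither of which changes the substance of the argument.
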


\begin{proof}
  First, recall that
  $(\ker U)^{\perp} = \mathop{\overline{\range}} U^* = \mathop{\overline{\range}}
  U$~\cite[Fact~2.18(iii), p.~32]{Bauschke.Combettes.book}. According to \cref{def:VIP},
  \begin{subequations}
    \begin{align}
      & x_*\ \text{solves}\ \text{VIP}(\nabla f
        + \partial g, \Fix T) \notag\\
      & \Leftrightarrow\ x_*\in\Fix T\
        \text{and}\ \exists \xi_*\in \partial g(x_*)\
        \text{s.t.}\ \forall y\in\Fix T,\ \innerp{y -
        x_*}{\nabla f(x_*) + \xi_*} \geq 0 \notag\\
      & \Leftrightarrow\ x_*\in\Fix T\ \text{and}\ \exists 
        \xi_*\in \partial g(x_*)\ \text{s.t.}\
        \forall z\in \ker U,\ \innerp{z}{\nabla
        f(x_*) + \xi_*} \geq 0 \label{use.FixT.description}\\
      & \Leftrightarrow\ x_*\in\Fix T\ \text{and}\ \exists 
        \xi_*\in \partial g(x_*)\ \text{s.t.}\
        \forall z\in \ker U,\ \innerp{z}{\nabla
        f(x_*) + \xi_*} \leq 0 \label{lin.vector.space.ineq}\\
      & \Leftrightarrow\
        x_*\in\Fix T\ \text{and}\ \exists \xi_*\in \partial
        g(x_*)\ \text{s.t.}\ \forall z\in \ker U,\
        \innerp{z}{\nabla f(x_*) + \xi_*} = 0 \notag\\
      & \Leftrightarrow\
        x_*\in\Fix T\ \text{and}\ \exists \xi_*\in \partial
        g(x_*)\ \text{s.t.}\ \nabla f(x_*) +
        \xi_*\in (\ker U)^{\perp} = \mathop{\overline{\range}} U
        \notag \\
      & \Leftrightarrow\
        x_*\in\Fix T\ \text{and}\ \left[\nabla f(x_*) + \partial
        g(x_*) \right] \cap \mathop{\overline{\range}} U \neq
        \emptyset \notag\\
      & \Leftrightarrow\ x_*\in\mathcal{A}_*\,,
    \end{align}
  \end{subequations}
  which establishes \eqref{solve.VIP.inf.dim}. Notice that \cref{prop:FixT} is used in
  \eqref{use.FixT.description} and $z\in \ker U \Leftrightarrow -z\in \ker U$ in
  \eqref{lin.vector.space.ineq}.

  Moreover,%
  \begin{subequations}%
    \begin{align}
      & (x_*, v_*)\in \Upsilon_*^{(\lambda)} \notag\\
      & \Leftrightarrow\ x_*\in\Fix T\ \text{and}\
        U \left(-\tfrac{v_*}{\lambda}\right) \in \nabla f(x_*) +
        \partial g(x_*) \notag\\
      & \Leftrightarrow\ x_*\in\Fix T\ \text{and}\ \exists
        v_*'\in\mathcal{X}\ \text{s.t.}\ Uv_*' \in \left[
        \nabla f(x_*) + \partial g(x_*) \right] \cap \range U
        \quad \left( v_*' = -\tfrac{v_*}{\lambda}\right)
        \notag\\
      & \Leftrightarrow\ x_*\in\Fix T\ \text{and}\
        \left[\nabla f(x_*) + \partial g(x_*) \right] \cap \range
        U \neq \emptyset \notag\\
      & \Rightarrow\ x_*\in\Fix T\ \text{and}\
        \left[\nabla f(x_*) + \partial g(x_*) \right] \cap
        \mathop{\overline{\range}} U \neq \emptyset \label{->}\\
      & \Leftrightarrow\ x_*\in \mathcal{A}_*\,, \notag
    \end{align}
  \end{subequations}%
  which establishes \eqref{O.lambda.means.Gamma*} via \eqref{solve.VIP.inf.dim}.

  In the case where $\mathcal{X}$ is Euclidean, \eqref{O.lambda.finite.dim} is established by the
  well-known fact $\mathop{\overline{\range}} U = \range U$~\cite[Thm.~2.4-3, p.~74]{Kreyszig},
  which turns ``$\Rightarrow$'' into ``$\Leftrightarrow$'' in \eqref{->}.
\end{proof}

\section{Algorithm and convergence analysis}\label{sec:algo} 

For any $T\in\mathfrak{T}_{\mathcal{A}}$ and any $\alpha\in (0,1)$, define the $\alpha$-averaged
mapping
\begin{align} 
  T_{\alpha}x := [\alpha T + (1-\alpha)
  \Id] x = Q_{\alpha}x +
  \alpha \pi\,, \label{def.Talpha}
\end{align} where $Q_{\alpha}:= \alpha Q +
(1-\alpha)\Id$.

\begin{theorem}\label{thm:basic}
  Consider $f,g\in \Gamma_0(\mathcal{X})$, with $L$ being the Lipschitz-continuity constant of
  $\nabla f$. Moreover, given the closed affine set $\mathcal{A}$, consider any
  $T\in\mathfrak{T}_{\mathcal{A}}$. For $\lambda\in\RealPP$, an arbitrarily fixed
  $x_0\in\mathcal{X}$, and for all $n\in\IntegerP$, the \textit{Fej\'{e}r-monotone hybrid steepest
    descent method}\/ (FM-HSDM) is stated as follows:
  \begin{subequations}\label{FM-HSDM}
    \begin{align}
      x_{1/2}
      & := T_{\alpha}x_0 - \lambda \nabla
        f(x_0)\,, \label{FM-HSDM.half}\\ 
      x_{1}
      & := \prox_{\lambda g}
        (x_{1/2})\,, \label{FM-HSDM.one}\\ 
      x_{n+3/2}
      & := x_{n+1/2} - \left[ T_{\alpha}x_n - \lambda \nabla
        f(x_n) \right] + \left[ Tx_{n+1} - \lambda \nabla
        f(x_{n+1})\right] \,, \label{FM-HSDM.n+one+half}\\
      x_{n+2}
      & := \prox_{\lambda g} (x_{n+3/2})\,. \label{FM-HSDM.n+two}
    \end{align}
  \end{subequations}
  Consider also $\alpha\in [0.5,1)$ and $\lambda\in (0,2(1-\alpha)/L)$. Then, the following hold
  true.
  
  \begin{thmlist}

  \item There exist a sequence $(v_n)_{n\in\IntegerP} \subset \mathcal{X}$ and a strongly positive
    operator $\Theta: \mathcal{X}^2 \to \mathcal{X}^2$ s.t.\ sequence
    $(y_n:= (x_n,v_n))_{n\in\IntegerPP\setminus \Set{1}}$ is Fej\'{e}r monotone~\cite[Def.~5.1,
    p.~75]{Bauschke.Combettes.book} w.r.t.\ $\Upsilon_*^{(\lambda)}$ of \cref{prop:O*} in the Hilbert
    space $(\mathcal{X}^2, \innerp{\cdot}{\cdot}_{\Theta})$, \ie,
    \begin{align*}
      \norm{(x_{n+1}, v_{n+1}) - (x_*, v_*)}_{\Theta} \leq
      \norm{(x_{n}, v_{n}) - (x_*, v_*)}_{\Theta}\,, \quad
      \forall (x_*,v_*)\in \Upsilon_*^{(\lambda)}\,.
    \end{align*}

  \item Sequence $(x_n)_{n\in\IntegerP}$ of \eqref{FM-HSDM} converges weakly to a point that solves
    $\text{VIP}(\nabla f + \partial g, \Fix T)$.

  \end{thmlist}
\end{theorem}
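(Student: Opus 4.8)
The plan is to recast the two half-step recursions \eqref{FM-HSDM} as a single first-order fixed-point iteration on the product space $\mathcal{X}^2$, and then read off both claims from the standard theory of averaged nonexpansive operators. First I would eliminate the half-integer iterates using the defining inclusion of the proximal step: $x_{n+2}=\prox_{\lambda g}(x_{n+3/2})$ is equivalent to $\tfrac{1}{\lambda}(x_{n+3/2}-x_{n+2})\in\partial g(x_{n+2})$. Next I would rewrite $T$ and $T_\alpha$ in terms of $U$: fixing any $w_*\in\mathcal{A}=\Fix T$ we have $\pi=(\Id-Q)w_*=U^2w_*$, so $Tx=x-U^2(x-w_*)$ and $T_\alpha x=x-\alpha U^2(x-w_*)$, whence $T-T_\alpha=-(1-\alpha)U^2(\,\cdot\,-w_*)$. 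Substituting these into \eqref{FM-HSDM.n+one+half} exhibits the recursion as second-order in $x_n$, with the extra $U^2$-weighted term accumulating; introducing an auxiliary (dual) sequence $v_n$ that aggregates these $U$-images (schematically $v_{n+1}=v_n+cUx_{n+1}$ for a suitable constant $c$) converts it into a first-order update $(x_{n+1},v_{n+1})=\mathcal{M}(x_n,v_n)$. I would then verify that the set of stationary points $\Fix\mathcal{M}$ coincides exactly with $\Upsilon_*^{(\lambda)}$ of \cref{prop:O*}: the $v$-equilibrium forces $Ux_*=Uw_*$, i.e.\ $x_*\in\Fix T$, and the $x$-equilibrium together with the prox inclusion gives $-\tfrac{1}{\lambda}Uv_*\in\nabla f(x_*)+\partial g(x_*)$.

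The core of the argument is to equip $\mathcal{X}^2$ with a self-adjoint block operator $\Theta$ that couples the primal and dual coordinates through $U$, and to show two things: that $\Theta$ is strongly positive, and that $\mathcal{M}$ is nonexpansive (indeed averaged) in $(\mathcal{X}^2,\innerp{\cdot}{\cdot}_{\Theta})$. Strong positivity of $\Theta$ is where the parameter range enters: using $\norm{U}\le 1$ (which follows from $\norm{\Id-Q}\le 1$, established in the proof of \cref{prop:FixT}, since $U^2=\Id-Q$ and $U$ is positive) together with $\alpha\in[0.5,1)$ and $\lambda\in(0,2(1-\alpha)/L)$, the relevant block Schur complement stays positive definite. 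For nonexpansiveness I would combine three ingredients: firm nonexpansiveness of $\prox_{\lambda g}$ (\cref{ex:prox}); the $(1/L)$-cocoercivity of $\nabla f$ supplied by the Baillon--Haddad theorem from $L$-Lipschitzness; and the positivity and norm bound of $Q$ (equivalently of $U$) coming from $T\in\mathfrak{T}$ (\cref{prop:ClassT.fne}). Here $\alpha\ge 1/2$ provides the averaging margin of $T_\alpha$ needed to absorb the forward gradient step, and the bound $\lambda<2(1-\alpha)/L$ is precisely the threshold making the leftover quadratic form nonnegative, exactly as in a forward--backward descent estimate.

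Claim (i) is then immediate: for any $(x_*,v_*)\in\Upsilon_*^{(\lambda)}=\Fix\mathcal{M}$, nonexpansiveness of $\mathcal{M}$ in the $\Theta$-metric gives $\norm{y_{n+1}-(x_*,v_*)}_{\Theta}\le\norm{y_{n}-(x_*,v_*)}_{\Theta}$, which is the asserted Fej\'er monotonicity of $(y_n)=(x_n,v_n)$ in $(\mathcal{X}^2,\innerp{\cdot}{\cdot}_{\Theta})$. For claim (ii) I would invoke the standard Krasnoselskii--Mann/Fej\'er-monotone machinery (\eg~\cite[\S5.2]{Bauschke.Combettes.book}): Fej\'er monotonicity bounds $(y_n)$, so weak cluster points exist; averagedness of $\mathcal{M}$ yields $y_{n+1}-y_n\to 0$; the demiclosedness principle for the nonexpansive $\mathcal{M}$ forces every weak cluster point into $\Fix\mathcal{M}=\Upsilon_*^{(\lambda)}$; and the general fact that a Fej\'er-monotone sequence all of whose weak cluster points lie in the target set converges weakly then gives $y_n\rightharpoonup(x_*,v_*)\in\Upsilon_*^{(\lambda)}$. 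Projecting onto the primal coordinate yields $x_n\rightharpoonup x_*$, and by \eqref{O.lambda.means.Gamma*} the limit $x_*$ solves $\text{VIP}(\nabla f+\partial g,\Fix T)$.

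The main obstacle is the second step: identifying the correct auxiliary sequence $v_n$ and the correct block metric $\Theta$ so that $\Theta$ is strongly positive on exactly the stated range and, simultaneously, $\mathcal{M}$ is provably averaged in the $\Theta$-inner product. This collapses to one delicate quadratic-form inequality in which the firm-nonexpansiveness surplus of $\prox_{\lambda g}$, the cocoercivity surplus of $\nabla f$, and the $U$- and $\alpha$-weighted cross terms must combine with the right signs; verifying nonnegativity, and in particular checking the boundary behaviour at $\alpha=1/2$ and as $\lambda\uparrow 2(1-\alpha)/L$, is where the genuine work lies. Once that single estimate is in hand, both the Fej\'er monotonicity and the weak convergence follow as routine consequences of the nonexpansive-operator theory.
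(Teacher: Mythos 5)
Your proposal is correct, and it is built on the same two constructions as the paper --- the accumulated dual variable and the strongly positive product metric --- but it packages the convergence argument in a genuinely different way, so a comparison is worthwhile. The paper defines exactly your aggregating sequence, $v_{n+1}:=(1-\alpha)\sum_{\nu=1}^{n+1}U(x_\nu-w_*)$, equivalently $v_{n+1}=v_n+(1-\alpha)U(x_{n+1}-w_*)$, under which \eqref{FM-HSDM} collapses to the first-order update $x_{n+1}=\prox_{\lambda g}\bigl(T_{\alpha}x_n-Uv_n-\lambda\nabla f(x_n)\bigr)$, i.e.\ your map $\mathcal{M}$, whose fixed-point set is indeed $\Upsilon_*^{(\lambda)}$; one caution, though: the working metric is simply block diagonal, $\Theta(x,v)=(Q_{\alpha}x,\,v/(1-\alpha))$ --- no off-diagonal $U$-coupling and no Schur-complement analysis are needed, contrary to what your sketch anticipates. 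The substantive divergence is this: the paper never forms $\mathcal{M}$ and never proves any two-point estimate; it establishes only the quasi-Fej\'{e}r inequality against points of $\Upsilon_*^{(\lambda)}$ (via the stationarity equations of $(x_*,v_*)$, monotonicity of $\partial g$, Baillon--Haddad cocoercivity, and Young's inequality with $\eta=2/L$), and then proves part (ii) by a bespoke subsequence analysis --- $(\Id-T)x_n\to 0$ plus demiclosedness of $T$ (not of $\mathcal{M}$) puts weak cluster points in $\Fix T$; a second Baillon--Haddad argument combined with weak lower semicontinuity of $g$ upgrades $\nabla f(x_{n_k})$ to \emph{strong} convergence; the subgradients $\xi_{n_k}$ then converge weakly, and the graph of the maximally monotone $\partial g$ is closed under these mixed limits by \cite[Cor.~20.49(ii), p.~306]{Bauschke.Combettes.book}, placing every cluster point in $\Upsilon_*^{(\lambda)}$. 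Your route instead asserts the stronger fact that $\mathcal{M}$ is averaged nonexpansive in $(\mathcal{X}^2,\innerp{\cdot}{\cdot}_{\Theta})$, and this is true: the paper's own inequality chain runs verbatim with the stationary point $(x_*,v_*,\xi_*)$ replaced by a second arbitrary point and its image (stationarity is never actually used there), yielding $\norm{\mathcal{M}y-\mathcal{M}y'}_{\Theta}^2\le\norm{y-y'}_{\Theta}^2-c\,\norm{(\Id-\mathcal{M})y-(\Id-\mathcal{M})y'}_{\Theta}^2$ for all $y,y'$, with $c>0$ exactly because $\alpha\geq 1/2$ and $\lambda L/[2(1-\alpha)]<1$. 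What your approach buys is that part (ii) then follows from off-the-shelf Krasnosel'skii--Mann theory and demiclosedness applied to $\mathcal{M}$ itself, replacing the paper's lengthy asymptotic analysis; what it costs is essentially nothing, since the key quadratic-form inequality is the same computation. One shared caveat: both arguments need $\Upsilon_*^{(\lambda)}\neq\emptyset$ (equivalently $\Fix\mathcal{M}\neq\emptyset$) to obtain boundedness of $(y_n)$, and by \cref{prop:O*} solvability of the VIP guarantees this only when $\mathcal{X}$ is finite dimensional, so your appeal to Krasnosel'skii--Mann theory inherits precisely the same implicit hypothesis as the paper's proof.
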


\begin{proof}
  \textbf{(i)} By \eqref{FM-HSDM.n+one+half},
  \begin{align}
    x_{n+3/2} -
    x_{n+1/2} = Tx_{n+1} - T_{\alpha}x_n
    - \lambda \left[\nabla f(x_{n+1}) - \nabla f(x_{n})
    \right] \,.  \label{diff.halves}
  \end{align}
  Since
  $z = \prox_{\lambda g}(y) \Leftrightarrow (\exists \xi\in\partial g(z)\ \text{s.t.}\ z + \lambda
  \xi = y)$, then
  \begin{align}
    \exists\xi_{n+2}\in \partial g(x_{n+2}) \label{existence.xi}
  \end{align}    
  s.t.\ $x_{n+3/2} = x_{n+2} + \lambda \xi_{n+2}$ and thus $\exists\xi_{n+1}\in \partial g(x_{n+1})$
  s.t.\ $x_{n+1/2} = x_{n+1} + \lambda \xi_{n+1}$. Incorporating the previous equations in
  \eqref{diff.halves} yields that $\forall n\in\IntegerP$,
  \begin{align}
    x_1
    & = T_{\alpha}x_0 - \lambda
      \left[\nabla f(x_0) + \xi_1\right]\,, \notag\\
    x_{n+2} - x_{n+1}
    & = Tx_{n+1} - T_{\alpha}x_n - \lambda \left[\nabla
      f(x_{n+1}) + \xi_{n+2} \right] + \lambda \left[\nabla
      f(x_{n}) + \xi_{n+1} \right] \,. \label{diff.consec.x}
  \end{align}
  Moreover, adding consecutive equations of \eqref{diff.consec.x} results into the following fact:
  \begin{alignat*}{2}
    x_{n+1} 
    & = &&\ Tx_n - \sum\nolimits_{\nu=1}^{n-1}
    (T_{\alpha}-T)x_{\nu} -
    \lambda \left[\nabla f(x_n) + \xi_{n+1}\right] \\
    & = &&\ Tx_n - \sum\nolimits_{\nu=1}^{n+1}
    (T_{\alpha}-T)x_{\nu} + (T_{\alpha}-T)x_{n} +
    (T_{\alpha}-T)x_{n+1} - \lambda \left[\nabla f(x_n)
      + \xi_{n+1}\right] \\
    & = &&\ 2T_{\alpha} x_{n+1} - Tx_{n+1} +
    (T_{\alpha}x_n - T_{\alpha}x_{n+1}) -
    \sum\nolimits_{\nu=1}^{n+1} (T_{\alpha}-T)x_{\nu} \\
    &&& - \lambda \left[\nabla f(x_n) + \xi_{n+1}\right] \,,
  \end{alignat*}
  where the last equality holds true $\forall n\in\IntegerP$. Consequently,
  \begin{align}
    (\Id+T-2T_{\alpha})x_{n+1}
    & + (T_{\alpha}x_{n+1} - T_{\alpha}x_n) \notag\\
    & = (1-2\alpha)(T-\Id) x_{n+1} +
      Q_{\alpha} (x_{n+1} - x_n) \notag\\
    & = - \sum\nolimits_{\nu=1}^{n+1} (T_{\alpha}-T)x_{\nu} -
      \lambda \left[\nabla f(x_n) +
      \xi_{n+1}\right]\,, \label{recursion.n.n+1}
  \end{align}
  where the first equation is due to \eqref{def.Talpha}.
  
  Choose arbitrarily a $w_*\in\Fix T$, \ie, $(\Id-T)w_* = 0$. Then,
  \begin{align*}
    (T_{\alpha} - T)x_\nu
    & = (1-\alpha) (\Id-T)x_{\nu}\\
    & = (1-\alpha)\left[(\Id-T)x_{\nu}
      - (\Id-T)w_* \right]\\
    & = (1-\alpha) (\Id-Q) (x_{\nu} - w_*)\,.
  \end{align*}
  Define also
  \begin{align*}
    v_{n+1} := (1-\alpha)
    \sum\nolimits_{\nu=1}^{n+1} U(x_{\nu} - w_*)\,.
  \end{align*}
  Point $v_{n+1}$ does not depend on the choice of the fixed point $w_*$. Indeed, by
  \cref{prop:FixT}, it can be verified that for any $w_{\#} \in\Fix T$, $w_{\#} - w_* \in \ker U$,
  and that
  \begin{align}
    v_{n+1}
    & = (1-\alpha)
      \sum\nolimits_{\nu=1}^{n+1} U(x_{\nu} - w_{\#}
      + w_{\#} - w_*) \notag\\
    & = (1-\alpha) \sum\nolimits_{\nu=1}^{n+1}
      \left[U(x_{\nu} - w_{\#}) +
      U(w_{\#} - w_*)\right] \notag\\
    & = (1-\alpha) \sum\nolimits_{\nu=1}^{n+1}
      U(x_{\nu} - w_{\#})\,. \label{define.v}
  \end{align}
  Moreover,
  \begin{align}
    v_{n+1} - v_{n}
    & = (1-\alpha)
      \sum\nolimits_{\nu=1}^{n+1}U(x_{\nu} -
      w_*) - (1-\alpha)
      \sum\nolimits_{\nu=1}^{n} U(x_{\nu} - w_*) \notag\\
    & = (1-\alpha) U(x_{n+1} - w_*)\,, \quad\forall w_*\in\Fix
      T\,, \label{consecutive.v}
  \end{align} and
  \begin{align}
    - \sum\nolimits_{\nu=1}^{n+1}
    (T_{\alpha}-T)x_{\nu}
    & = -(1-\alpha)
      \sum\nolimits_{\nu=1}^{n+1} (\Id-Q) (x_{\nu} - 
      w_*) \notag\\
    & = - U (1-\alpha)
      \sum\nolimits_{\nu=1}^{n+1} U (x_{\nu} -
      w_*)\notag\\
    & = -Uv_{n+1} \,. \label{sum.T.and.v}
  \end{align}
  Under the previous considerations, \eqref{recursion.n.n+1} becomes
  \begin{align}
    (1-2\alpha)(T-\Id)x_{n+1} +
    Q_{\alpha} (x_{n+1} - x_n)
    + Uv_{n+1} = - \lambda \left[\nabla f(x_n) +
    \xi_{n+1}\right] \,. \label{basic.recursion.n.n+1}
  \end{align}

  Recall now \cref{prop:O*}, and consider \textit{any}\/ $(x_*, v_*)\in \Upsilon_*^{(\lambda)}$. By
  the definition of $\Upsilon_*^{(\lambda)}$, $(\Id- T)x_* = 0$ and there exists
  $\xi_*\in \partial g(x_*)$ s.t.\ $Uv_* + \lambda [\nabla f(x_*) + \xi_*] = 0$. These arguments,
  \eqref{basic.recursion.n.n+1} and $(T-\Id)x_{n+1} - (T-\Id)x_* = (Q-\Id)(x_{n+1}-x_*)$ yield 
  \begin{alignat}{2}
    & \lambda [\nabla f(x_n) - \nabla f(x_*)] + \lambda (\xi_{n+1}
    - \xi_*) && \notag\\
    &&& \hspace{-150pt}= -(1-2\alpha)(Q-\Id)(x_{n+1} - x_*)
    -Q_{\alpha} (x_{n+1} - x_n) -U(v_{n+1} - v_*)
    \,. \label{describe.grads}
  \end{alignat}
  The Baillon-Haddad theorem~\cite{Baillon.Haddad}, \cite[Cor.~18.16,
  p.~270]{Bauschke.Combettes.book} states that the $L$-Lipschitz continuous $\nabla f$ is
  $(1/L)$-inverse strongly monotone, \ie, $\forall(x, x') \in \mathcal{X}^2$,
  $\innerp{x - x'}{\nabla f(x) - \nabla f(x')} \geq (1/L) \norm{\nabla f(x) - \nabla f(x')}^2$. This
  property, the fact that $\partial g$ is monotone~\cite[Example~20.3,
  p.~294]{Bauschke.Combettes.book}, \ie, $\forall x, x', \xi, \xi'$ s.t.\ $\xi\in\partial g(x)$ and
  $\xi'\in\partial g(x')$, $\innerp{x-x'}{\xi - \xi'} \geq 0$, and the fact that $U$ is self adjoint
  imply
  \begin{subequations}\label{Baillon.Haddad}
    \begin{align}
      \tfrac{2\lambda}{L}
      & \norm{\nabla
        f(x_n) - \nabla f(x_*)}^2 \notag\\
      & \mathbin{\leq} 2\lambda
        \innerp{x_n - x_*}{\nabla f(x_n) - \nabla
        f(x_*)} \notag\\
      & \mathbin{\leq} 2\lambda
        \innerp{x_{n+1} - x_*}{\nabla f(x_n) -
        \nabla f(x_*)} + 2\lambda \innerp{x_{n} -
        x_{n+1}}{\nabla f(x_n) - \nabla
        f(x_*)}\notag\\
      & \mathbin{\hphantom{\leq}} + 2\lambda
        \innerp{x_{n+1} - 
        x_*}{\xi_{n+1} - \xi_*} \notag\\
      & \mathbin{=}
        2\innerp{x_{n+1} - x_*}{\lambda [\nabla
        f(x_n) - \nabla f(x_*)] + \lambda (\xi_{n+1} -
        \xi_*)} \notag\\
      & \mathbin{\hphantom{\leq}} + 2\lambda \innerp{x_{n} -
        x_{n+1}}{\nabla f(x_n) - \nabla f(x_*)}
        \notag\\
      & \mathbin{=} -2(1-2\alpha) \innerp{x_{n+1} -
        x_*}{(Q-\Id)(x_{n+1} - x_*)} -
        2\innerp{x_{n+1} - x_*}{Q_{\alpha}
        (x_{n+1} - x_n)} \notag\\
      & \mathbin{\hphantom{\leq}} -
        2\innerp{x_{n+1} - x_*}{U(v_{n+1} -
        v_*)} + 2\lambda \innerp{x_{n} -
        x_{n+1}}{\nabla f(x_n) - \nabla
        f(x_*)} \label{replace.grads}\\
      & \mathbin{=} -2(1-2\alpha)
        \innerp{x_{n+1} - x_*}{(Q-\Id)(x_{n+1} - x_*)} -
        2\innerp{x_{n+1} - x_*}{Q_{\alpha}
        (x_{n+1} - x_n)}\notag\\
      & \mathbin{\hphantom{\leq}} -
        2\innerp{U(x_{n+1} - x_*)}{v_{n+1} -
        v_*} + 2\lambda \innerp{x_{n} -
        x_{n+1}}{\nabla f(x_n) - \nabla
        f(x_*)}\notag\\
      & \mathbin{\leq} -2(1-2\alpha)
        \innerp{x_{n+1} - x_*}{(Q-\Id)(x_{n+1} - x_*)} -
        2\innerp{x_{n+1} - x_*}{Q_{\alpha}
        (x_{n+1} - x_n)} \notag\\
      & \mathbin{\hphantom{\leq}} - \tfrac{2}{1-\alpha}
        \innerp{v_{n+1} - v_n}{v_{n+1} - v_*}
        + \tfrac{\lambda L}{2}\norm{x_n - x_{n+1}}^2
        \notag\\
      & \mathbin{\hphantom{\leq}} + \tfrac{2\lambda}{L}
        \norm{\nabla f(x_n) - \nabla f(x_*)}^2
        \,,  \label{apply.young.ineq} 
    \end{align}
  \end{subequations}
  where \eqref{describe.grads} is used in \eqref{replace.grads}, and \eqref{consecutive.v} as well
  as
  \begin{align}
    2\innerp*{\tfrac{a}{\sqrt{\eta}}}{\sqrt{\eta}\,
    b}_{\Pi} \leq \tfrac{1}{\eta}
    \norm{a}^2_{\Pi} + \eta
    \norm{b}^2_{\Pi}\,, \quad 
    \left\{
    \begin{aligned}
      &\forall (a, b)\in \mathcal{X}^2\,,
      \forall\eta\in\RealPP\,, \\
      & \forall\ \text{strongly positive}\
      \Pi\in\mathfrak{B}(\mathcal{X})\,,  
      \end{aligned}
        \right. \label{Young.ineq}
  \end{align}
  with $\eta := 2/L$, $a:= x_n - x_{n+1}$, $b:= \nabla f(x_n) - \nabla f(x_*)$, $\Pi := \Id$, were
  used in \eqref{apply.young.ineq}.

  Recall \eqref{def.Talpha} to verify that the positivity of $Q$ implies that for any
  $x\in\mathcal{X}$,
  \begin{align}
    \innerp{Q_{\alpha}x}{x} = \alpha \innerp{Qx}{x} +
    (1-\alpha)\norm{x}^2 \geq (1-\alpha)\norm{x}^2\,,
    \label{Qa.is.PD}
  \end{align}
  \ie, $Q_{\alpha}$ is strongly positive.  Hence, upon defining the linear mapping
  $\Theta: \mathcal{X}^2 \to \mathcal{X}^2: (x,v) \mapsto (Q_{\alpha}x, v/(1-\alpha))$, it can be
  easily seen that $\Theta$ is strongly positive, under the standard inner product
  $\innerp{(x,v)}{(x',v')} := \innerp{x}{x'} + \innerp{v}{v'}$,
  $\forall (x,v), (x',v')\in \mathcal{X}^2$, due to the fact that both $Q_{\alpha}$ and
  $\Id/(1-\alpha)$ are strongly positive. Consequently,
  $(\mathcal{X}^2, \innerp{\cdot}{\cdot}_{\Theta})$ can be considered to be a Hilbert space equipped
  with the inner product $\innerp{\cdot}{\cdot}_{\Theta}$.

  Notation $y:=(x,v)$, $\alpha\geq 1/2$ as well as the positivity of $\Id-Q$ in
  \eqref{Baillon.Haddad} yield
  \begin{alignat*}{2}
    0
    & \,\mathbin{\leq}\, &&
    2 \innerp{(x_{n+1} - x_n, v_{n+1} - v_n)}{\Theta(x_* -
      x_{n+1}, v_* - v_{n+1})} \\
    &&& - 2(2\alpha-1) \innerp{x_{n+1} - x_*}{(\Id
      - Q)(x_{n+1} - x_*)} \\
    &&& + \tfrac{\lambda L}{2}\norm{x_n - x_{n+1}}^2 \\
    & \,\mathbin{=}\, && 2 \innerp{y_{n+1} - y_{n}}{\Theta (y_*-
      y_{n+1})} - 2(2\alpha-1) \innerp{x_{n+1} - x_*}{(\Id -
      Q)(x_{n+1} - x_*)} \\
    &&& + \tfrac{\lambda
      L}{2}\norm{x_n - x_{n+1}}^2 \\
    & \,\mathbin{\leq}\, && 2 \innerp{y_{n+1} - y_{n}}{y_*-
      y_{n+1}}_{\Theta} + \tfrac{\lambda
      L}{2}\norm{x_n - x_{n+1}}^2\\
    & \,\mathbin{=}\, && \norm{y_n - y_*}_{\Theta}^2 - \norm{y_{n+1} -
      y_*}_{\Theta}^2 - \norm{y_{n+1} - y_n}_{\Theta}^2 + \tfrac{\lambda
      L}{2}\norm{x_n - x_{n+1}}^2\,.
  \end{alignat*}
  Hence,
  \begin{align}
    \norm{y_n - y_*}_{\Theta}^2 -
    \norm{y_{n+1} - y_*}_{\Theta}^2
    & \geq \norm{y_{n+1} - y_n}_{\Theta}^2 -
      \tfrac{\lambda L}{2}\norm{x_n -
      x_{n+1}}^2 \,.\label{pre.Fejer}
  \end{align}
  Since $\lambda < 2(1-\alpha)/L$, choose any $\zeta\in (\lambda L/[2(1-\alpha)],1)$. Then, by
  \eqref{Qa.is.PD}, $\forall y := (x, v)$,
  \begin{align*}
    \tfrac{\lambda L}{2} \norm{x}^2
     <
      \zeta(1-\alpha) \norm{x}^2 \leq
      \zeta \innerp{x}{Q_{\alpha}x}
     \leq
      \zeta \innerp{x}{Q_{\alpha}x} +
      \zeta\tfrac{1}{1-\alpha}\norm{v}^2 = \zeta
      \norm{y}^2_{\Theta} \,,
  \end{align*}
  and by \eqref{pre.Fejer},
  \begin{align}
    \norm{y_n - y_*}_{\Theta}^2 -
    \norm{y_{n+1} - y_*}_{\Theta}^2
    & \geq \norm{y_{n+1} - y_n}_{\Theta}^2 -
      \tfrac{\lambda L}{2}\norm{x_n - x_{n+1}}^2 \notag\\ 
    & \geq \norm{y_{n+1} - y_n}_{\Theta}^2 - \zeta
      \norm{y_{n+1} - y_n}_{\Theta}^2 \notag\\
    & = (1-\zeta) \norm{y_{n+1} - y_n}_{\Theta}^2 \,, \label{Fejer}
  \end{align}
  \ie, sequence $(y_n)_{n\in\IntegerP} \subset (\mathcal{X}^2, \innerp{\cdot}{\cdot}_{\Theta})$ is
  Fej\'{e}r monotone w.r.t.\ $\Upsilon_*^{(\lambda)}$ of \cref{prop:O*}.

  \textbf{(ii)} Due to Fej\'{e}r monotonicity, sequence $(y_n)_n$ is bounded [as well as $(x_n)_n$
  and $(v_n)_n$]~\cite[Prop.~5.4(i), p.~76]{Bauschke.Combettes.book} and possesses a non-empty set
  of weakly sequential cluster points $\mathfrak{W}[(y_n)_n]$~\cite[Lem.~2.37,
  p.~36]{Bauschke.Combettes.book}. Moreover, it can be verified by \eqref{Fejer}, that
  $\forall n\in\IntegerP$,
  \begin{align*}
    (1-\zeta) \sum\nolimits_{\nu=2}^n
    \norm{y_{\nu+1} - y_{\nu}}_{\Theta}^2 \leq
    \norm{y_{2} - y_*}_{\Theta}^2 -
    \norm{y_{n+1} - y_*}_{\Theta}^2 \leq
    \norm{y_{2} - y_*}_{\Theta}^2\,,
  \end{align*}
  and hence there exist $C', C\in \RealPP$ s.t.\ for any $n$,
  \begin{align}
    \sum\nolimits_{\nu=0}^n \norm{y_{\nu+1} -
    y_{\nu}}_{\Theta}^2 \leq \tfrac{C'}{1-\zeta} =:
    C\,, \label{y.as.regular}
  \end{align}
  which leads to $\lim_{n\to\infty} \norm{y_{n+1} - y_n}_{\Theta} = 0$, and which further implies
  that
  \begin{align}
    \lim\nolimits_{n\to\infty} (x_{n+1} -
    x_n) = 0\,, \quad \lim\nolimits_{n\to\infty}
    (v_{n+1} - v_n) = 0\,. \label{as.regularity.x.v}
  \end{align}

  Adding the following equations, which result from \eqref{basic.recursion.n.n+1},
  \begin{align}
    -\tfrac{1}{\lambda}(1-2\alpha)(T-\Id) x_{n+1}
    -\tfrac{1}{\lambda} Q_{\alpha} (x_{n+1} - x_n)
    - \tfrac{1}{\lambda} Uv_{n+1}
    - \nabla f(x_n)
    & = \xi_{n+1} \label{xi.n+1}\\
    \tfrac{1}{\lambda} (1-2\alpha)(T-\Id)x_{n}
    + \tfrac{1}{\lambda} Q_{\alpha} (x_{n}- x_{n-1})
    + \tfrac{1}{\lambda} Uv_n
    + \nabla
    f(x_{n-1})
    & = -\xi_{n} \notag
  \end{align}
  yields
  \begin{alignat}{2}
    \xi_{n+1} - \xi_n & = &&\
    \tfrac{1-2\alpha}{\lambda}(T-\Id)(x_{n} - x_{n+1}) +
    \tfrac{1}{\lambda} Q_{\alpha} (x_{n} - x_{n-1}) -
    \tfrac{1}{\lambda} Q_{\alpha}
    (x_{n+1} - x_{n}) \notag\\
    &&&\ + \tfrac{1}{\lambda} U(v_{n} - v_{n+1}) + [\nabla
    f(x_{n-1}) - \nabla f(x_{n})] \,. \label{diff.consecutive.xi}
  \end{alignat}
  By applying $\lim_{n\to\infty}$ to the previous equality, and by using the Lipschitz continuity of
  $\nabla f$, \ie, $\norm{\nabla f(x_n) - \nabla f(x_{n-1})} \leq L \norm{x_n- x_{n-1}}$,
  \eqref{as.regularity.x.v}, as well as the continuity of $\Id-T$, $Q_{\alpha}$ and $U$, it can be
  verified that
  \begin{align}
    \lim\nolimits_{n\to\infty} (\xi_{n+1} -
    \xi_n) = 0 \,. \label{as.regularity.xi}
  \end{align}

  Now, by \eqref{diff.consec.x},
  \begin{align*}
    x_{n+2} 
    & - x_{n+1} \\
    & = Tx_{n+1} -
      T_{\alpha} x_{n+1} + T_{\alpha} x_{n+1} -
      T_{\alpha}x_n - \lambda [\nabla f(x_{n+1}) - \nabla
      f(x_{n})] - \lambda [\xi_{n+2} - \xi_{n+1}] \\
    & = (T-T_{\alpha}) x_{n+1} + Q_{\alpha}
      (x_{n+1} - x_n) - \lambda [\nabla f(x_{n+1})
      - \nabla f(x_{n})] - \lambda [\xi_{n+2} - \xi_{n+1}] \,,
  \end{align*}
  which leads to
  \begin{align}
    (1-\alpha)(\Id-T)x_n
    \,\mathbin{=}\, 
    & (x_{n} - x_{n+1}) + Q_{\alpha} (x_{n} -
      x_{n-1}) \notag\\
    &\, - \lambda [\nabla f(x_{n}) - \nabla
      f(x_{n-1})] - \lambda [\xi_{n+1} - \xi_{n}]
      \,. \label{I-T.seq}
  \end{align}

  Choose any
  $\overline{y} := (\overline{x}, \overline{v})\in \mathfrak{W}[(y_n)_{n\in\IntegerP}]\neq \emptyset$,
  \ie, there exists a subsequence $(y_{n_k} := (x_{n_k}, v_{n_k}))_k$ s.t.\
  $x_{n_k} \rightharpoonup_{k\to\infty} \overline{x}$ and
  $v_{n_k}\rightharpoonup_{k\to\infty} \overline{v}$. Furthermore, by \eqref{as.regularity.x.v},
  \eqref{as.regularity.xi}, \eqref{I-T.seq}, and the Lipschitz continuity of $\nabla f$,
  \begin{alignat}{2}
    \lim\sup\nolimits_{n\to\infty} \norm*{(\Id-T) x_{n}}
    & \mathbin{\leq} && \tfrac{1}{1-\alpha}
    \lim\nolimits_{k\to\infty} 
    \norm*{x_{n} - x_{n+1}} +
    \lim\nolimits_{k\to\infty} \tfrac{1}{1-\alpha}
    \norm{Q_{\alpha}(x_{n} - x_{n-1})} \notag \\
    &&& + \tfrac{\lambda}{1-\alpha} \lim\nolimits_{k\to\infty} 
    \norm*{\nabla f(x_{n}) - \nabla f(x_{n-1})} \notag\\
    &&& +
    \tfrac{\lambda}{1-\alpha} \lim\nolimits_{k\to\infty}
    \norm*{\xi_{n+1} - \xi_{n}} \notag\\
    & \mathbin{\leq} && \tfrac{1}{1-\alpha}
    \lim\nolimits_{k\to\infty} 
    \norm*{x_{n} - x_{n+1}} +
    \lim\nolimits_{k\to\infty}
    \tfrac{\norm{Q_{\alpha}}}{1-\alpha} 
    \norm{x_{n} - x_{n-1}} \notag\\
    &&& + \tfrac{\lambda L}{1-\alpha} \lim\nolimits_{k\to\infty}
    \norm*{x_{n} - x_{n-1}} +
    \tfrac{\lambda}{1-\alpha} \lim\nolimits_{k\to\infty}
    \norm*{\xi_{n+1} - \xi_{n}} \notag\\
    & \mathbin{=} &&\, 0\,. \label{(I-T)xn.conv.0}
  \end{alignat}
  Hence, due to $x_{n_k} \rightharpoonup_{k\to\infty} \overline{x}$,
  $\lim_{k\to\infty} (\Id-T) x_{n_k}=0$, and the demiclosedness property of the nonexpansive mapping
  $T$~\cite[Thm.~4.17, p.~63]{Bauschke.Combettes.book}, it follows that
  \begin{align}
    \overline{x}\in \Fix T \,. \label{O*.1st.constraint}
  \end{align}

  Fix arbitrarily an $x_{\#}\in\mathcal{X}$. Since $(x_n)_n$ is bounded, there exist
  $C'', C_{\nabla f} \in \RealPP$ s.t.\ for any $n$,
  \begin{align}
    \norm{\nabla f(x_n)} 
    & \leq \norm{\nabla f(x_n) - \nabla
    f(x_{\#})} + \norm{\nabla f(x_{\#})} \notag\\
    & \leq L\norm{x_n-x_{\#}} + \norm{\nabla f(x_{\#})} \notag\\
    & \leq L(\norm{x_n} + \norm{x_{\#}}) + \norm{\nabla
      f(x_{\#})} \notag\\
    & \leq L(C'' + \norm{x_{\#}}) + \norm{\nabla
      f(x_{\#})} \leq C_{\nabla f} \,. \label{bounded.grad.f}
  \end{align}

  Now, according to the Baillon-Haddad theorem~\cite{Baillon.Haddad},
  \cite[Cor.~18.16, p.~270]{Bauschke.Combettes.book},
  \begin{subequations}\label{BH.for.strong.conv.gradient}
    \begin{align}
      \tfrac{2\lambda}{L}
      & \norm*{\nabla f(x_{n_k}) - \nabla
        f(\overline{x})}^2 \notag\\
      & \mathbin{\leq} 2\lambda \innerp*{x_{n_k} -
        \overline{x}}{\nabla f(x_{n_k}) - \nabla
        f(\overline{x})} \notag\\
      & \mathbin{=} 2\lambda \innerp*{x_{n_k+1} -
        \overline{x}}{\nabla f(x_{n_k})} \notag\\
      & \mathbin{\hphantom{\leq}} - 2\lambda \innerp*{x_{n_k+1} -
        \overline{x}}{\nabla f(\overline{x})}
        + 2\lambda \innerp*{x_{n_k} -
        x_{n_k+1}}{\nabla f(x_{n_k}) - \nabla f(\overline{x})}
        \notag\\
      & \mathbin{=} - 2\lambda \innerp*{x_{n_k+1} -
        \overline{x}}{\xi_{n_k+1}} - 2\innerp*{x_{n_k+1} -
        \overline{x}}{Uv_{n_k+1}} \notag\\
      & \mathbin{\hphantom{\leq}} - 2 \innerp*{x_{n_k+1} -
        \overline{x}}{Q_{\alpha}(x_{n_k+1} - x_{n_k})}
        -(1-2\alpha) \innerp*{x_{n_k+1} -
        \overline{x}}{(T-\Id)x_{n_k+1}} \notag\\
      & \mathbin{\hphantom{\leq}} - 2\lambda \innerp*{x_{n_k+1} -
        \overline{x}}{\nabla f(\overline{x})}
        + 2\lambda \innerp*{x_{n_k} -
        x_{n_k+1}}{\nabla f(x_{n_k}) - \nabla f(\overline{x})}
        \label{use.grad.f} \\
      & \mathbin{\leq} 2\lambda \left[g(\overline{x}) -
        g(x_{n_k+1}) \right] - 2\innerp*{U(x_{n_k+1} -
        \overline{x})}{v_{n_k+1}} \notag\\
      & \mathbin{\hphantom{\leq}} - 2 \innerp*{x_{n_k+1} -
        \overline{x}}{Q_{\alpha}(x_{n_k+1} - x_{n_k})}
        -(1-2\alpha) \innerp*{x_{n_k+1} -
        \overline{x}}{(T-\Id)x_{n_k+1}} \notag\\
      & \mathbin{\hphantom{\leq}} - 2\lambda \innerp*{x_{n_k+1} -
        \overline{x}}{\nabla f(\overline{x})}
        + 2\lambda \innerp*{x_{n_k} -
        x_{n_k+1}}{\nabla f(x_{n_k}) - \nabla f(\overline{x})}
      \label{lsc.U.is.adjoint}\\ 
      & \mathbin{\leq} 2\lambda \left[g(\overline{x}) -
        g(x_{n_k+1}) \right] - \tfrac{2}{1-\alpha} \innerp*{v_{n_k+1} -
        v_{n_k}}{v_{n_k+1}} \notag\\
      & \mathbin{\hphantom{\leq}} - 2 \innerp*{x_{n_k+1} -
        \overline{x}}{Q_{\alpha}(x_{n_k+1} - x_{n_k})}
        -(1-2\alpha) \innerp*{x_{n_k+1} -
        \overline{x}}{(T-\Id)x_{n_k+1}} \notag\\
      & \mathbin{\hphantom{\leq}} - 2\lambda \innerp*{x_{n_k+1} -
        \overline{x}}{\nabla f(\overline{x})}
        + 2\lambda \left(
        C_{\nabla f} + \norm{\nabla f(\overline{x})} \right)
        \norm{x_{n_k} - x_{n_k+1}}\,,
        \label{cons.v.bounded.grad.f}
    \end{align}
  \end{subequations}
  where \eqref{basic.recursion.n.n+1} is used in \eqref{use.grad.f}, the convexity of $g$,
  \eqref{existence.xi} and the self adjointness of $U$ in \eqref{lsc.U.is.adjoint}, and finally
  \eqref{consecutive.v} and \eqref{bounded.grad.f} in \eqref{cons.v.bounded.grad.f}. Since
  $\lim_{k\to\infty} (x_{n_k} - x_{n_k+1}) =0$ by \eqref{as.regularity.x.v}, the continuity of
  $Q_{\alpha}$ implies $\lim_{k\to\infty} Q_{\alpha}(x_{n_k+1} - x_{n_k})=0$, and
  \eqref{(I-T)xn.conv.0} yields $\lim_{k\to\infty}(T-\Id)x_{n_k+1}=0$. Notice again by
  \eqref{as.regularity.x.v} that $\lim_{k\to\infty} (v_{n_k+1} - v_{n_k}) = 0$. Further,
  \eqref{as.regularity.x.v}, together with
  $(x_{n_k} - \overline{x}) \rightharpoonup_{k\to\infty} 0$, yields
  $(x_{n_k+1} - \overline{x}) \rightharpoonup_{k\to\infty} 0$. Similarly,
  $(v_{n_k+1} -\overline{v}) \rightharpoonup_{k\to\infty} 0$ can be deduced from
  \eqref{as.regularity.x.v} and $(v_{n_k} - \overline{v}) \rightharpoonup_{k\to\infty} 0$. Due to
  \cite[Lem.~2.41(iii), p.~37]{Bauschke.Combettes.book}, all of the previous arguments result in
  $\lim_{k\to\infty} \innerp{v_{n_k+1} - v_{n_k}}{v_{n_k+1}}=0$,
  $\lim_{k\to\infty} \innerp{x_{n_k+1} - \overline{x}}{Q_{\alpha}(x_{n_k+1} - x_{n_k})} = 0$,
  $\lim_{k\to\infty} \innerp*{x_{n_k+1} - \overline{x}}{(T-\Id)x_{n_k+1}}=0$,
  $\lim_{k\to\infty} \innerp*{x_{n_k+1} - \overline{x}}{\nabla f(\overline{x})}=0$, and
  $\lim_{k\to\infty} \norm{x_{n_k} - x_{n_k+1}} =0$. Hence, the application of
  $\lim\sup_{k\to\infty}$ onto both sides of \eqref{cons.v.bounded.grad.f} yields
  \begin{align*}
    \lim\sup_{k\to\infty} \norm*{\nabla f(x_{n_k}) - \nabla
    f(\overline{x})}^2
    & \leq \lim\sup_{k\to\infty} L \left[g(\overline{x}) -
      g(x_{n_k+1}) \right] \\
    & = L \left[g(\overline{x}) - \lim\inf_{k\to\infty}
      g(x_{n_k+1})\right] \leq 0\,,
  \end{align*}
  where the last inequality is deduced from the fact that $g\in\Gamma_0(\mathcal{X})$ turns out to
  be also weakly sequentially lower semicontinuous~\cite[Thm.~9.1,
  p.~129]{Bauschke.Combettes.book}. In other words,
  \begin{align}
    \lim\nolimits_{k\to\infty} \nabla f(x_{n_k}) = \nabla
    f(\overline{x}) \,. \label{grad.f.conv.strongly}
  \end{align}

  Since $v_{n_k+1}\rightharpoonup_{k} \overline{v}$, \ie, $\forall z\in\mathcal{X}$,
  $\lim_{k\to\infty} \innerp{z}{v_{n_k+1}} = \innerp{z}{\overline{v}}$, it can be easily seen that
  $\forall z\in\mathcal{X}$,
  $\lim_{k\to\infty} \innerp{z}{Uv_{n_k+1}} = \lim_{k\to\infty} \innerp{Uz}{v_{n_k+1}} =
  \innerp{Uz}{\overline{v}} = \innerp{z}{U \overline{v}}$, \ie,
  $Uv_{n_k+1}\rightharpoonup_{k} U\overline{v}$. Hence, having this result and
  \eqref{grad.f.conv.strongly} plugged into \eqref{xi.n+1} yields that
  \begin{align}
    \xi_{n_k+1}\rightharpoonup_{k\to\infty}
    \overline{\xi}:= -\tfrac{1}{\lambda}U\overline{v} - \nabla
    f(\overline{x})\,. \label{def.xi.overline}
  \end{align} 
  Using \eqref{basic.recursion.n.n+1} once again,
  \begin{alignat}{2}
    \innerp{x_{n_k+1} - \overline{x}}{\xi_{n_k+1}}
    & = && -\innerp{x_{n_k+1} - \overline{x}}{\nabla f(x_{n_k})}
    - \tfrac{1}{\lambda} \innerp{x_{n_k+1} -
      \overline{x}}{Uv_{n_k+1}} \notag \\ 
    &&& -\tfrac{1}{\lambda} \innerp{x_{n_k+1} -
      \overline{x}}{Q_{\alpha}(x_{n_k+1} - x_{n_k})} \notag \\
    &&& - \tfrac{1}{\lambda} (1-2\alpha) \innerp{x_{n_k+1} -
      \overline{x}}{(T-\Id)x_{n_k+1}} \notag \\
    & = && -\innerp{x_{n_k+1} - \overline{x}}{\nabla f(x_{n_k})}
    - \tfrac{1}{\lambda} \innerp{U(x_{n_k+1} -
      \overline{x})}{v_{n_k+1}} \notag \\
    &&& - \tfrac{1}{\lambda}\innerp{x_{n_k+1} -
      \overline{x}}{Q_{\alpha}(x_{n_k+1} - x_{n_k})} \notag \\
    &&& - \tfrac{1}{\lambda} (1-2\alpha) \innerp{x_{n_k+1} -
      \overline{x}}{(T-\Id)x_{n_k+1}} \notag \\
    & = && -\innerp{x_{n_k+1} - \overline{x}}{\nabla f(x_{n_k})}
    - \tfrac{1}{\lambda(1-\alpha)} \innerp{v_{n_k+1} -
      v_{n_k}}{v_{n_k+1}} \notag\\
    &&& -\tfrac{1}{\lambda} \innerp{x_{n_k+1} -
      \overline{x}}{Q_{\alpha}(x_{n_k+1} - x_{n_k})} \notag\\ 
    &&& - \tfrac{1}{\lambda} (1-2\alpha) \innerp{x_{n_k+1} -
      \overline{x}}{(T-\Id)x_{n_k+1}}\,, \label{pre.Fitzpatrick}
  \end{alignat}
  where \eqref{consecutive.v} is used in \eqref{pre.Fitzpatrick}. Since
  $(x_{n_k+1} - \overline{x}) \rightharpoonup_{k} 0$ and
  $v_{n_k+1}\rightharpoonup_{k} \overline{v}$, and due to \eqref{as.regularity.x.v},
  \eqref{(I-T)xn.conv.0} and \eqref{grad.f.conv.strongly}, as well as the continuity of the linear
  mapping $Q_{\alpha}$, it turns out by \cite[Lem.~2.41(iii), p.~37]{Bauschke.Combettes.book} and
  \eqref{pre.Fitzpatrick} that
  $\lim_{k\to\infty} \innerp{x_{n_k+1} - \overline{x}}{\xi_{n_k+1}} = 0$. In other words,
  \begin{align}
    \lim\nolimits_{k\to\infty} \innerp*{x_{n_k+1}}{\xi_{n_k+1}} 
    & = \lim\nolimits_{k\to\infty} \left(\innerp*{x_{n_k+1} -
    \overline{x}}{\xi_{n_k+1}} +
      \innerp*{\overline{x}}{\xi_{n_k+1}} \right) \notag\\
    & = \lim\nolimits_{k\to\infty} \innerp*{x_{n_k+1} -
      \overline{x}}{\xi_{n_k+1}} + \lim\nolimits_{k\to\infty}
      \innerp*{\overline{x}}{\xi_{n_k+1}} \notag\\
    & = \lim\nolimits_{k\to\infty}
      \innerp*{\overline{x}}{\xi_{n_k+1}} =
      \innerp*{\overline{x}}{\overline{\xi}}\,.
      \label{Fitzpatrick}
  \end{align}
  Now, by $(x_{n_k+1}, \xi_{n_k+1})\in \graph\partial g$, the maximal monotonicity of
  $\partial g$~\cite[Thm.~20.40, p.~304]{Bauschke.Combettes.book} and the property manifested in
  \eqref{Fitzpatrick}, \cite[Cor.~20.49(ii), p.~306]{Bauschke.Combettes.book} suggests that
  $(\overline{x}, \overline{\xi})\in \graph \partial g \Leftrightarrow \overline{\xi}\in \partial
  g(\overline{x})$. Hence, according also to \eqref{def.xi.overline},
  $-U (\overline{v}/\lambda)\in \nabla f(\overline{x}) + \partial g(\overline{x})$, which together
  with \eqref{O*.1st.constraint} imply $(\overline{x}, \overline{v}) \in
  \Upsilon_*^{(\lambda)}$. Since $(\overline{x}, \overline{v})$ was arbitrarily chosen within
  $\mathfrak{W}[(y_n)_n]$, it follows that $\mathfrak{W}[(y_n)_n] \subset
  \Upsilon_*^{(\lambda)}$. Adding also to that the Fej\'{e}r monotonicity property \eqref{Fejer} of
  $(y_n)_{n\in\IntegerP}$ w.r.t.\ $\Upsilon_*^{(\lambda)}$ yields that $(y_n)_n$ converges weakly to a
  point in $\Upsilon_*^{(\lambda)}$~\cite[Thm.~5.5, p.~76]{Bauschke.Combettes.book}. According to
  \eqref{O.lambda.means.Gamma*}, the weak limit of $(x_n)_{n}$ solves
  $\text{VIP}(\nabla f + \partial g, \Fix T)$.
\end{proof}

\begin{definition}[\protect{\cite[(10.2),
    p.~144]{Bauschke.Combettes.book}}]\label{def:uniform.convex} A proper convex function $h:
  \mathcal{X} \to (-\infty, +\infty]$ is called \textit{uniformly convex}\/ on a non-empty subset
  $\mathcal{S}$ of $\domain h$, if there exists an increasing function
  $\varphi_{\mathcal{S}} : [0,+\infty]\to [0,+\infty]$, which vanishes only at $0$, s.t.\
  $\forall x,x'\in \mathcal{S}$ and $\forall \mu\in (0,1)$,
  \begin{align*}
    h(\mu x + (1-\mu)x') + \mu(1-\mu)
    \varphi_{\mathcal{S}}(\norm{x-x'}) \leq \mu h(x) + (1-\mu)h(x')\,.
  \end{align*}
  In the case where $\mathcal{S}:= \domain h$ and
  $\varphi_{\mathcal{S}} := (\beta_{\mathcal{S}}/2)(\cdot)^2$, for some
  $\beta_{\mathcal{S}}\in \RealPP$, then $h$ is called \textit{strongly convex}\/ with constant
  $\beta_{\mathcal{S}}$. Moreover, ``strong convexity'' $\Rightarrow$ ``uniform convexity''
  $\Rightarrow$ ``strict convexity.''
\end{definition}

\begin{assumption}\mbox{}
  \begin{asslist}

  \item\label{as:unif.convx.f} Function $f$ is uniformly convex on every non-empty bounded subset of
    $\mathcal{X}$.

  \item\label{as:unif.convx.g} Function $g$ is uniformly convex on every non-empty bounded subset of
    $\domain\partial g$.

  \end{asslist}
\end{assumption}

\begin{lemma}\label{lem:unif.convex}
  In addition to the setting of \cref{thm:basic}, if either \cref{as:unif.convx.f} or
  \cref{as:unif.convx.g} holds true, then sequence $(x_n)_{n\in\IntegerP}$ of \eqref{FM-HSDM}
  converges strongly to a point that solves $\text{VIP}(\nabla f + \partial g, \Fix T)$.
\end{lemma}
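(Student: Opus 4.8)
The plan is to upgrade the weak convergence of \cref{thm:basic}(ii) to strong convergence by exploiting that either hypothesis forces a quantitative strengthening of monotonicity. Throughout, let $\bar{x}$ denote the weak limit of $(x_n)_{n\in\IntegerP}$ furnished by \cref{thm:basic}(ii), so that the weakly convergent sequence $(y_n:=(x_n,v_n))_n$ satisfies $(\bar{x},\bar{v})\in\Upsilon_*^{(\lambda)}$; recall also that $(x_n)_n$ and $(v_n)_n$ are bounded. The first observation is that \cref{def:uniform.convex}, restricted to a fixed bounded set containing all iterates, yields \textit{uniform monotonicity}\/ of the pertinent operator: under \cref{as:unif.convx.f} there is an increasing $\varphi_f$, vanishing only at $0$, with $\innerp{x-x'}{\nabla f(x)-\nabla f(x')}\geq \varphi_f(\norm{x-x'})$; under \cref{as:unif.convx.g} there is an increasing $\varphi_g$, vanishing only at $0$, with $\innerp{x-x'}{\xi-\xi'}\geq \varphi_g(\norm{x-x'})$ whenever $\xi\in\partial g(x)$ and $\xi'\in\partial g(x')$.

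Under \cref{as:unif.convx.g} I would re-derive the Fej\'er estimate \eqref{Fejer} while retaining this surplus. In the chain \eqref{Baillon.Haddad} the subdifferential $\partial g$ enters at exactly one place, namely where the nonnegative quantity $2\lambda\innerp{x_{n+1}-x_*}{\xi_{n+1}-\xi_*}$ is appended to the upper bound; monotonicity was used there merely as ``$\geq 0$.'' Replacing that by $2\lambda\innerp{x_{n+1}-x_*}{\xi_{n+1}-\xi_*}\geq 2\lambda\varphi_g(\norm{x_{n+1}-x_*})$ carries the extra term through the cancellation of the $\tfrac{2\lambda}{L}\norm{\nabla f(x_n)-\nabla f(x_*)}^2$ contributions, and \eqref{Fejer} sharpens to
\begin{align*}
  \norm{y_n-y_*}_{\Theta}^2-\norm{y_{n+1}-y_*}_{\Theta}^2 \geq (1-\zeta)\norm{y_{n+1}-y_n}_{\Theta}^2 + 2\lambda\varphi_g(\norm{x_{n+1}-x_*})\,,\quad \forall (x_*,v_*)\in\Upsilon_*^{(\lambda)}\,.
\end{align*}
Telescoping in $n$ gives $\sum_n \varphi_g(\norm{x_{n+1}-x_*})<\infty$, hence $\varphi_g(\norm{x_{n+1}-x_*})\to 0$; since $\varphi_g$ is increasing, vanishes only at $0$, and $\norm{x_{n+1}-x_*}$ is bounded, this forces $\norm{x_{n+1}-x_*}\to 0$. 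Choosing $x_*=\bar{x}$ yields $x_n\to\bar{x}$ strongly.

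Under \cref{as:unif.convx.f} the same device is unavailable, and this is the step I expect to be the main obstacle: the coercive term $\tfrac{2\lambda}{L}\norm{\nabla f(x_n)-\nabla f(x_*)}^2$ coming from Baillon--Haddad must cancel \emph{exactly}\/ against the term produced by Young's inequality \eqref{apply.young.ineq}, so there is no slack into which $\varphi_f$ can be inserted (a combined co-coercivity/uniform-monotonicity bound retaining the full constant $1/L$ does not hold in general). I would therefore route around it through the gradients. The computation \eqref{BH.for.strong.conv.gradient}, which produced \eqref{grad.f.conv.strongly} along a subsequence, in fact only uses the weak convergence $x_{n+1}\rightharpoonup\bar{x}$, the vanishing increments \eqref{as.regularity.x.v}, the consequence $(\Id-T)x_{n+1}\to 0$ of \eqref{(I-T)xn.conv.0}, and the weak lower semicontinuity of $g$ at $\bar{x}$. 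Since \cref{thm:basic}(ii) already supplies $x_n\rightharpoonup\bar{x}$ for the \emph{entire}\/ sequence, rerunning \eqref{BH.for.strong.conv.gradient} verbatim with $n$ in place of $n_k$ gives $\limsup_n\norm{\nabla f(x_n)-\nabla f(\bar{x})}^2\leq L[g(\bar{x})-\liminf_n g(x_{n+1})]\leq 0$, so $\nabla f(x_n)\to\nabla f(\bar{x})$ strongly. Finally, uniform monotonicity of $\nabla f$ together with the pairing of the weakly null $(x_n-\bar{x})$ and the strongly null $(\nabla f(x_n)-\nabla f(\bar{x}))$ \cite[Lem.~2.41(iii), p.~37]{Bauschke.Combettes.book} gives $0\leq \varphi_f(\norm{x_n-\bar{x}})\leq \innerp{x_n-\bar{x}}{\nabla f(x_n)-\nabla f(\bar{x})}\to 0$, whence $\norm{x_n-\bar{x}}\to 0$ exactly as before. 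In both cases the solution property of the strong limit $\bar{x}$ is inherited from \cref{thm:basic} via \cref{prop:O*}.
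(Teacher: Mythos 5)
Your proposal is correct, but only one of its two branches follows the paper's own route. For \cref{as:unif.convx.f} your argument coincides with the paper's proof: the paper likewise re-uses \eqref{grad.f.conv.strongly} for the entire (now weakly convergent) sequence, pairs the weakly null $x_n-\overline{x}$ against the strongly convergent gradients via \cite[Lem.~2.41(iii), p.~37]{Bauschke.Combettes.book}, and concludes from uniform monotonicity of $\nabla f$ on the bounded set $(x_n)_n\cup\Set{\overline{x}}$. Your parenthetical remark that no surplus can be inserted in the $f$-branch is also accurate --- cocoercivity is used at full strength $1/L$ and cancels exactly against \eqref{apply.young.ineq}, and $f=\tfrac{L}{2}\norm{\cdot}^2$ saturates the Baillon--Haddad bound --- which is precisely why both you and the paper must route that case through strong convergence of the gradients.

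For \cref{as:unif.convx.g} you genuinely depart from the paper. The paper's argument is ``soft'': it first shows $(\xi_n)_n\rightharpoonup\overline{\xi}$, establishes the product limit $\lim_n\innerp{x_n}{\xi_n}=\innerp{\overline{x}}{\overline{\xi}}$ exactly as in \eqref{Fitzpatrick}, deduces $\lim_n\innerp{x_n-\overline{x}}{\xi_n-\overline{\xi}}=0$, and only then invokes uniform monotonicity of $\partial g$ at the weak limit. You instead re-open the chain \eqref{Baillon.Haddad}, observe (correctly) that monotonicity of $\partial g$ enters there exactly once and only as nonnegativity, carry the surplus $2\lambda\varphi_g(\norm{x_{n+1}-x_*})$ through to a sharpened \eqref{Fejer}, and telescope. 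This is sound: $x_{n+1}$ and $x_*$ lie in $\domain\partial g$, the anchor set $(x_n)_n\cup\Set{x_*}$ is bounded by the Fej\'{e}r monotonicity already secured in \cref{thm:basic}, and the surplus propagates because it enters the chain additively. Your variant buys something the paper's does not: summability $\sum_n\varphi_g(\norm{x_{n+1}-x_*})<+\infty$ rather than mere convergence to zero (so under strong convexity of $g$ one reads off $\sum_n\norm{x_{n+1}-x_*}^2<+\infty$, a rate-type statement), and it bypasses the weak convergence of the subgradients and the Fitzpatrick-type argument altogether; what it costs is the bookkeeping of re-deriving \eqref{Baillon.Haddad}--\eqref{Fejer} with the extra term, which the paper's limit-based argument avoids.
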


\begin{proof}
  As part \textbf{(ii)} of the proof of \cref{thm:basic} has demonstrated, sequences $(x_n)_n$ and
  $(Uv_n)_n$ converge weakly to $\overline{x}$ and $U\overline{v}$, respectively. Consequently,
  \eqref{as.regularity.x.v}, the continuity of $Q_{\alpha}$, \eqref{xi.n+1}, \eqref{(I-T)xn.conv.0},
  \eqref{grad.f.conv.strongly} and \eqref{def.xi.overline} suggest that $(\xi_n)_n$ converges weakly
  to $\overline{\xi}$.

  Let \cref{as:unif.convx.f} hold true. Then, according to \cite[Ex.~22.3(iii),
  p.~324]{Bauschke.Combettes.book}, given a bounded set $\mathcal{B}\subset \mathcal{X}$, there
  exists an increasing function $\varphi_{\mathcal{B}}: [0,+\infty)\to [0,+\infty]$, which vanishes
  only at $0$, s.t.\ $\forall x,x'\in\mathcal{B}$,
  \begin{align}
    \innerp*{x-x'}{\nabla f(x) - \nabla f(x')} \geq
    2\varphi_{\mathcal{B}}
    \left(\norm{x-x'}\right)\,. \label{uniformly.monotone.f} 
  \end{align}
  Define $\mathcal{B} := (x_n)_n \cup \Set{\overline{x}}$ (recall that $(x_n)_n$ is bounded). Set
  $x:= x_n$ and $x' := \overline{x}$ in \eqref{uniformly.monotone.f} to obtain
  \begin{align}
    \innerp*{x_n-\overline{x}}{\nabla f(x_n) - \nabla
    f(\overline{x})} \geq 2\varphi_{\mathcal{B}}
    \left(\norm{x_n-\overline{x}}\right)\,, \quad\forall
    n\,. \label{uniformly.monotone.f.xn}
  \end{align}
  Since $x_n\rightharpoonup_{n\to\infty} \overline{x}$ and
  $\lim_{n\to\infty}\nabla f(x_n) = \nabla f(\overline{x})$ by \eqref{grad.f.conv.strongly}, the
  application of $\lim_{n\to\infty}$ to \eqref{uniformly.monotone.f.xn} and \cite[Lem.~2.41(iii),
  p.~37]{Bauschke.Combettes.book} suggest that
  $\lim_{n\to\infty} \varphi_{\mathcal{B}} (\norm{x_n-\overline{x}})=0$, and thus
  $\lim_{n\to\infty}\norm{x_n-\overline{x}} = 0$, due to the properties of $\varphi_{\mathcal{B}}$.

  Let now \cref{as:unif.convx.g} hold true. Then, according to \cite[Ex.~22.3(iii),
  p.~324]{Bauschke.Combettes.book}, given a bounded set $\mathcal{B}\subset \domain\partial g$,
  there exists an increasing function $\varphi_{\mathcal{B}}: [0,+\infty)\to [0,+\infty]$, which
  vanishes only at $0$, s.t.\ $\forall x,x'\in\mathcal{B}$, and $\forall \xi\in\partial g(x)$,
  $\forall\xi'\in \partial g(x')$,
  \begin{align}
    \innerp*{x-x'}{\xi - \xi'} \geq
    2\varphi_{\mathcal{B}}
    \left(\norm{x-x'}\right)\,. \label{uniformly.monotone.g}
  \end{align}
  According to \eqref{existence.xi}, $x_n\in\domain \partial g$, $\forall n$. Moreover, as the
  discussion after \eqref{Fitzpatrick} demonstrated, $\overline{x}\in \domain \partial g$. Define
  thus the bounded set $\mathcal{B} := (x_n)_n \cup \Set{\overline{x}}\subset \domain\partial g$,
  and set $x:= x_n$, $x' := \overline{x}$, $\xi := \xi_n$ and $\xi' := \overline{\xi}$ in
  \eqref{uniformly.monotone.g} to obtain
  \begin{align}
    \innerp*{x_n-\overline{x}}{\xi_n - \overline{\xi}} \geq
    2\varphi_{\mathcal{B}} 
    \left(\norm{x_n-\overline{x}}\right)\,, \quad\forall
    n\,. \label{uniformly.monotone.g.xn}
  \end{align}
  Similarly to \eqref{Fitzpatrick}, it can be verified that
  $\lim_{n\to\infty}\innerp{x_n}{\xi_n} = \innerp{\overline{x}}{\overline{\xi}}$. Thus,
  \begin{align*}
    \lim_{n\to\infty} \innerp*{x_n-\overline{x}}{\xi_n -
    \overline{\xi}} 
    & =  \lim_{n\to\infty} \innerp*{x_n}{\xi_n} -
    \lim_{n\to\infty} \innerp*{x_n}{\overline{\xi}} 
    - \lim_{n\to\infty} \innerp*{\overline{x}}{\xi_n}
    + \innerp{\overline{x}}{\overline{\xi}} \\
    & = \innerp{\overline{x}}{\overline{\xi}} -
      \innerp{\overline{x}}{\overline{\xi}} -
      \innerp{\overline{x}}{\overline{\xi}} +
      \innerp{\overline{x}}{\overline{\xi}} = 0\,.
  \end{align*}
  Hence, the application of $\lim_{n\to\infty}$ to \eqref{uniformly.monotone.g.xn} yields
  $\lim_{n\to\infty} \varphi_{\mathcal{B}} (\norm{x_n-\overline{x}})=0$, and thus
  $\lim_{n\to\infty}\norm{x_n-\overline{x}} = 0$.
\end{proof}

\begin{corollary}\label{cor:f=0.g=0}
  Consider again the setting of \cref{thm:basic}. In the case where the non-smooth part of the
  composite loss becomes zero, \ie, $g:=0$, then \eqref{FM-HSDM} takes the special form
  \begin{subequations}\label{FM-HSDM.g=0}
    \begin{align}
      x_{1/2}
      & := T_{\alpha}x_0 -
        \lambda \nabla f(x_0)\,, \\
      x_{1}
      & := x_{1/2}\,,\\
      x_{n+3/2}
      & := x_{n+1/2} - \left[ T_{\alpha}x_n - \lambda \nabla
        f(x_n) \right] + \left[ Tx_{n+1} - \lambda \nabla
        f(x_{n+1})\right]\,,\\
      x_{n+2}
      & := x_{n+3/2}\,.
    \end{align}
  \end{subequations}
  Consider $\alpha\in [0.5,1)$ and $\lambda\in (0,2(1-\alpha)/L)$. Then the following hold true.
  \begin{corlist}
  \item For sequence $(x_n)_{n\in\IntegerP}$ of \eqref{FM-HSDM.g=0}, there exist a sequence
    $(v_n)_{n\in\IntegerP} \subset \mathcal{X}$ and a strongly positive operator
    $\Theta: \mathcal{X}^2 \to \mathcal{X}^2$ s.t.\ sequence
    $(y_n:= (x_n,v_n))_{n\in\IntegerPP\setminus \Set{1}}$ is Fej\'{e}r monotone~\cite[Def.~5.1,
    p.~75]{Bauschke.Combettes.book} w.r.t.\ $\Upsilon_*^{(\lambda)}$ of \cref{prop:O*} (under
    $g=0$) in the Hilbert space $(\mathcal{X}^2, \innerp{\cdot}{\cdot}_{\Theta})$.
  \item Sequence $(x_n)_{n\in\IntegerP}$ of \eqref{FM-HSDM.g=0} converges weakly to a point that
    solves $\text{VIP}(\nabla f, \Fix T)$.
  \end{corlist}

  In the case where $f:=0$, the FM-HSDM recursions take the form
  \begin{subequations}\label{FM-HSDM.f=0}
    \begin{align}
      x_{1/2}
      & := T_{\alpha} x_0\,, \\
      x_{1}
      & := \prox_{\lambda g} (x_{1/2})\,, \\
      x_{n+3/2}
      & := x_{n+1/2} - T_{\alpha}x_n + Tx_{n+1} \,,\\
      x_{n+2}
      & := \prox_{\lambda g} (x_{n+3/2})\,.
    \end{align}
  \end{subequations}
  Consider $\alpha\in [0.5,1)$ and $\lambda\in\RealPP$. Then the following hold true.  
  \begin{corlist}\setcounter{corlisti}{2}
  \item\label{test} For sequence $(x_n)_{n\in\IntegerP}$ of \eqref{FM-HSDM.f=0}, there exist a
    sequence $(v_n)_{n\in\IntegerP} \subset \mathcal{X}$ and a strongly positive operator
    $\Theta: \mathcal{X}^2 \to \mathcal{X}^2$ s.t.\ sequence
    $(y_n:= (x_n,v_n))_{n\in\IntegerPP\setminus \Set{1}}$ is Fej\'{e}r monotone~\cite[Def.~5.1,
    p.~75]{Bauschke.Combettes.book} w.r.t.\ $\Upsilon_*^{(\lambda)}$ of \cref{prop:O*} (under
    $f=0$) in the Hilbert space $(\mathcal{X}^2, \innerp{\cdot}{\cdot}_{\Theta})$.
  \item Sequence $(x_n)_{n\in\IntegerP}$ of \eqref{FM-HSDM.f=0} converges weakly to a point that
    solves $\text{VIP}(\partial g, \Fix T)$.
  \end{corlist}
  
\end{corollary}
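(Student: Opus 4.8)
The plan is to obtain all four assertions as immediate specializations of \cref{thm:basic}, since \eqref{FM-HSDM.g=0} and \eqref{FM-HSDM.f=0} are literally the recursions \eqref{FM-HSDM} evaluated at $g:=0$ and $f:=0$, respectively. Thus the sequence $(v_n)_n$ and the strongly positive operator $\Theta$ in each claim are precisely those built in the proof of \cref{thm:basic}, and the only real work is to check (a) that each specialization collapses \eqref{FM-HSDM} to the advertised simpler form, and (b) that the hypotheses of \cref{thm:basic}, in particular the admissible range of $\lambda$, remain valid after the substitution.

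First I would handle $g:=0$. Here $\prox_{\lambda\cdot 0}(x) = \Argmin_{z\in\mathcal{X}} \tfrac{1}{2}\norm{x-z}^2 = x$, so $\prox_{\lambda g} = \Id$; this turns \eqref{FM-HSDM.one} into $x_1 = x_{1/2}$ and \eqref{FM-HSDM.n+two} into $x_{n+2} = x_{n+3/2}$, reproducing \eqref{FM-HSDM.g=0} exactly. Because $f$, $\nabla f$, and hence the Lipschitz constant $L$ are untouched, every hypothesis of \cref{thm:basic} (including $\alpha\in[0.5,1)$ and $\lambda\in(0,2(1-\alpha)/L)$) holds verbatim, so claims (i)--(ii) are inherited directly. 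One only observes that the subdifferential of the zero function equals $\Set{0}$ everywhere, so $\xi_n=0$ in \eqref{existence.xi}; consequently $\text{VIP}(\nabla f+\partial g,\Fix T)=\text{VIP}(\nabla f,\Fix T)$, and $\Upsilon_*^{(\lambda)}$ is \eqref{O.lambda} read with $\partial g\equiv 0$.

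The case $f:=0$ is analogous, with one point deserving care—and this is the step I expect to be the crux. Setting $f:=0$ gives $\nabla f = 0$, which annihilates every $\lambda\nabla f(\cdot)$ term in \eqref{FM-HSDM.half} and \eqref{FM-HSDM.n+one+half} and leaves the proximal steps intact, yielding \eqref{FM-HSDM.f=0}. The subtlety is the step-size restriction: \cref{thm:basic} requires $\lambda\in(0,2(1-\alpha)/L)$ for a Lipschitz constant $L$ of $\nabla f$, yet the corollary permits \emph{any} $\lambda\in\RealPP$. The reconciliation is that $\nabla 0 = 0$ is $L$-Lipschitz for \emph{every} $L\in\RealPP$; hence, for any prescribed $\lambda\in\RealPP$ one may simply choose $L\in(0,2(1-\alpha)/\lambda)$, so that $\lambda < 2(1-\alpha)/L$ and \cref{thm:basic} applies with this $L$. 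The constraint therefore evaporates and every $\lambda\in\RealPP$ is admissible. With this choice, claims (iii)--(iv) follow from \cref{thm:basic}, using $\text{VIP}(\nabla f+\partial g,\Fix T)=\text{VIP}(\partial g,\Fix T)$ and the corresponding reading of $\Upsilon_*^{(\lambda)}$ in \eqref{O.lambda} under $\nabla f\equiv 0$.
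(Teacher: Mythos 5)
Your proposal is correct and takes essentially the same approach as the paper: the paper's proof also obtains all four claims by specializing \cref{thm:basic} to $g:=0$ or $f:=0$, and justifies the relaxation to arbitrary $\lambda\in\RealPP$ in the $f:=0$ case by exactly your observation that any positive constant can serve as the Lipschitz constant of $\nabla f = 0$. Your write-up merely makes explicit the details the paper leaves implicit ($\prox_{\lambda\cdot 0}=\Id$, $\partial 0 \equiv \Set{0}$, and the resulting identification of $\text{VIP}$ and $\Upsilon_*^{(\lambda)}$ under each specialization).
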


\begin{proof}
  The proof becomes a special case of the one of \cref{thm:basic}, after setting $f:=0$ or
  $g:=0$. With regards to the reason behind the relaxation of $\lambda$ offered by
  \eqref{FM-HSDM.f=0}, notice that any $\lambda\in\RealPP$ can serve as the Lipschitz constant of
  $\nabla f = 0$.
\end{proof}

The following theorem draws even stronger links with the original form of HSDM.

\begin{theorem}\label{thm:plain.vanilla.HSDM}
  Consider $f\in \Gamma_0(\mathcal{X})$, with $L$ being the Lipschitz-continuity constant of
  $\nabla f$. Moreover, given the closed affine set $\mathcal{A}$, consider any
  $T\in\mathfrak{T}_{\mathcal{A}}$, and for $\lambda\in\RealPP$, an arbitrarily fixed
  $x_0\in\mathcal{X}$, and for all $n\in\IntegerP$ form the iterations:
  \begin{subequations}\label{FM-HSDM.g=0.original}
    \begin{align}
      x_{1/2}
      & := T_{\alpha}x_0 -
        \lambda \nabla f(T_{\alpha}x_0)\,, \\
      x_{1}
      & := x_{1/2}\,,\\
      x_{n+3/2}
      & := x_{n+1/2} - \left[ T_{\alpha}x_n - \lambda \nabla
        f(T_{\alpha}x_n) \right] + \left[ Tx_{n+1} - \lambda \nabla
        f(T_{\alpha}x_{n+1})\right]\,,\\
      x_{n+2}
      & := x_{n+3/2}\,,
    \end{align}
  \end{subequations}
  where $T_{\alpha}$ is defined in \eqref{def.Talpha}. Consider also $\alpha\in [0.5,1)$ and
  $\lambda\in (0,2(1-\alpha)^2/L)$. Then, the following hold true.

  \begin{enumerate}[label=\textbf{(\roman*)}]

  \item There exist a sequence $(v_n)_{n\in\IntegerP} \subset \mathcal{X}$ and a strongly positive
    operator $\Upsilon: \mathcal{X}^2 \to \mathcal{X}^2$ s.t.\ sequence
    $(y_n:= (x_n,v_n))_{n\in\IntegerPP\setminus \Set{1}}$ is Fej\'{e}r monotone~\cite[Def.~5.1,
    p.~75]{Bauschke.Combettes.book} w.r.t.\ $\Upsilon_*^{(\lambda)}$ of \cref{prop:O*} (under $g=0$)
    in the Hilbert space $(\mathcal{X}^2, \innerp{\cdot}{\cdot}_{\Upsilon})$.

  \item Sequence $(x_n)_n$ of \eqref{FM-HSDM.g=0.original} converges weakly to a point that solves
    $\text{VIP}(\nabla f, \Fix T)$.

  \item\label{thm:plain.vanilla.HSDM.strong.conv} If \cref{as:unif.convx.f} also holds true, then
    $(x_n)_n$ of \eqref{FM-HSDM.g=0.original} converges strongly to a point that solves
    $\text{VIP}(\nabla f, \Fix T)$.

  \end{enumerate}

\end{theorem}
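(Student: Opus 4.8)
The plan is to run the argument of \cref{thm:basic} with $g:=0$ (so that $\prox_{\lambda g}=\Id$, every multiplier $\xi_n$ vanishes, and $x_1=x_{1/2}$, $x_{n+2}=x_{n+3/2}$), changing only the point at which $\nabla f$ is sampled. First I would telescope the recursion exactly as in the passage leading to \eqref{basic.recursion.n.n+1}: I introduce the same auxiliary sequence $v_{n+1}:=(1-\alpha)\sum_{\nu=1}^{n+1}U(x_\nu-w_*)$, which is independent of $w_*\in\Fix T$ by \cref{prop:FixT}, and use $(T_\alpha-T)x_\nu=(1-\alpha)(\Id-Q)(x_\nu-w_*)$ together with \eqref{consecutive.v} to arrive at
\begin{align*}
  (1-2\alpha)(T-\Id)x_{n+1}+Q_{\alpha}(x_{n+1}-x_n)+Uv_{n+1}=-\lambda\,\nabla f(T_{\alpha}x_n)\,,
\end{align*}
the sole change from \eqref{basic.recursion.n.n+1} (with $g=0$) being $\nabla f(T_{\alpha}x_n)$ in place of $\nabla f(x_n)$. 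The crucial observation is that any $(x_*,v_*)\in\Upsilon_*^{(\lambda)}$ has $x_*\in\Fix T$, whence $T_{\alpha}x_*=\alpha Tx_*+(1-\alpha)x_*=x_*$ and therefore $\nabla f(T_{\alpha}x_*)=\nabla f(x_*)$; subtracting the solution identity $Uv_*=-\lambda\nabla f(x_*)$ reproduces \eqref{describe.grads} with $d_n:=\nabla f(T_{\alpha}x_n)-\nabla f(x_*)$ on the left.

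The heart of the proof, and the step I expect to be the main obstacle, is the Baillon--Haddad estimate, which must now be anchored at $(T_{\alpha}x_n,x_*)$ rather than $(x_n,x_*)$. Inverse strong monotonicity gives $\tfrac{2\lambda}{L}\norm{d_n}^2\le 2\lambda\innerp{T_{\alpha}x_n-x_*}{d_n}$, and since $T_{\alpha}$ is affine with $T_{\alpha}x_*=x_*$ one has $T_{\alpha}x_n-x_*=Q_{\alpha}(x_n-x_*)$. Splitting $Q_{\alpha}(x_n-x_*)=Q_{\alpha}(x_{n+1}-x_*)-Q_{\alpha}(x_{n+1}-x_n)$, substituting the recursion above into the first piece, and using that $Q_{\alpha}$ and $U$ are self-adjoint and commute (both being functions of $Q$), I would move the operators across the inner products. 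Three pieces emerge: the one carrying $1-2\alpha$ becomes $2(1-2\alpha)\innerp{Q_{\alpha}U(x_{n+1}-x_*)}{U(x_{n+1}-x_*)}\le0$ (as $\alpha\ge\tfrac12$ and $Q_{\alpha}\succeq0$) and is dropped; the second is $-2\innerp{x_{n+1}-x_*}{Q_{\alpha}^2(x_{n+1}-x_n)}$; and the third, via $U(x_{n+1}-x_*)=\tfrac{1}{1-\alpha}(v_{n+1}-v_n)$ from \eqref{consecutive.v}, is $-\tfrac{2}{1-\alpha}\innerp{Q_{\alpha}(v_{n+1}-v_n)}{v_{n+1}-v_*}$. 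This dictates the metric $\Upsilon:(x,v)\mapsto(Q_{\alpha}^2x,\tfrac{1}{1-\alpha}Q_{\alpha}v)$, which is strongly positive because $Q_{\alpha}$ is. The leftover cross term $-2\lambda\innerp{Q_{\alpha}(x_{n+1}-x_n)}{d_n}$ I bound by \eqref{Young.ineq} with $\eta:=2/L$ applied to $Q_{\alpha}(x_n-x_{n+1})$ and $d_n$, together with $\norm{Q_{\alpha}}\le1$, producing $\tfrac{\lambda L}{2}\norm{x_n-x_{n+1}}^2+\tfrac{2\lambda}{L}\norm{d_n}^2$; the last summand cancels the left-hand side exactly as in \cref{thm:basic}.

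It remains to convert $0\le 2\innerp{y_{n+1}-y_n}{y_*-y_{n+1}}_{\Upsilon}+\tfrac{\lambda L}{2}\norm{x_n-x_{n+1}}^2$ into Fej\'er monotonicity. Polarization in the $\Upsilon$-inner product turns the first term into $\norm{y_n-y_*}_{\Upsilon}^2-\norm{y_{n+1}-y_*}_{\Upsilon}^2-\norm{y_{n+1}-y_n}_{\Upsilon}^2$, and the quantitative point is that the $x$-block of $\Upsilon$ is $Q_{\alpha}^2$: from $\innerp{Q_{\alpha}x}{x}\ge(1-\alpha)\norm{x}^2$ and Cauchy--Schwarz one gets $\norm{Q_{\alpha}x}\ge(1-\alpha)\norm{x}$, hence $\norm{y_{n+1}-y_n}_{\Upsilon}^2\ge\norm{Q_{\alpha}(x_{n+1}-x_n)}^2\ge(1-\alpha)^2\norm{x_{n+1}-x_n}^2$. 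Thus $\tfrac{\lambda L}{2}\norm{x_n-x_{n+1}}^2\le\zeta\norm{y_{n+1}-y_n}_{\Upsilon}^2$ for some $\zeta\in(0,1)$ precisely when $\lambda<2(1-\alpha)^2/L$, which is exactly the stated step-size window and accounts for the extra factor $1-\alpha$ relative to \cref{thm:basic}. This establishes \textbf{(i)}.

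For \textbf{(ii)} and \textbf{(iii)} I would transcribe the second half of the proof of \cref{thm:basic} and the proof of \cref{lem:unif.convex} almost verbatim. Summability of $\norm{y_{n+1}-y_n}_{\Upsilon}^2$ yields $x_{n+1}-x_n\to0$ and $v_{n+1}-v_n\to0$; the analogue of \eqref{I-T.seq}--\eqref{(I-T)xn.conv.0} gives $(\Id-T)x_n\to0$, now using $\norm{\nabla f(T_{\alpha}x_n)-\nabla f(T_{\alpha}x_{n-1})}\le L\norm{Q_{\alpha}(x_n-x_{n-1})}\le L\norm{x_n-x_{n-1}}$, and demiclosedness of $\Id-T$ places every weak cluster point $\overline{x}$ in $\Fix T$. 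Since $T_{\alpha}x_n-x_n=\alpha(T-\Id)x_n\to0$, the iterates and their images under $T_{\alpha}$ share weak limits, so sampling $\nabla f$ at $T_{\alpha}x_n$ causes no difficulty: the strong-gradient-convergence chain \eqref{BH.for.strong.conv.gradient}, specialized to $g=0$ and anchored at $(T_{\alpha}x_{n_k},\overline{x})$, gives $\nabla f(T_{\alpha}x_{n_k})\to\nabla f(\overline{x})$, and the Fitzpatrick-type identity then shows $\overline{x}$ solves $\text{VIP}(\nabla f,\Fix T)$, whence weak convergence by \cite[Thm.~5.5, p.~76]{Bauschke.Combettes.book}. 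Finally, under \cref{as:unif.convx.f}, uniform monotonicity of $\nabla f$ applied at $(T_{\alpha}x_n,x_*)$ forces $\varphi_{\mathcal{B}}(\norm{T_{\alpha}x_n-\overline{x}})\to0$, hence $T_{\alpha}x_n\to\overline{x}$ strongly, and $x_n=T_{\alpha}x_n-\alpha(T-\Id)x_n\to\overline{x}$ strongly, giving \textbf{(iii)}.
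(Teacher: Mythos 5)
Your proposal is correct and follows essentially the same route as the paper's own proof: the same auxiliary sequence $v_n$ and telescoped recursion, the same key use of $T_{\alpha}x_*=x_*$ to anchor the Baillon--Haddad estimate at $T_{\alpha}x_n$, the same metric $\Upsilon:(x,v)\mapsto(Q_{\alpha}^2x,\tfrac{1}{1-\alpha}Q_{\alpha}v)$ with the $(1-\alpha)^2$ lower bound accounting for the step-size window $\lambda\in(0,2(1-\alpha)^2/L)$, and the same transcriptions of the weak- and strong-convergence arguments for parts \textbf{(ii)} and \textbf{(iii)}. The only cosmetic differences (deriving $\innerp{Q_{\alpha}^2x}{x}\geq(1-\alpha)^2\norm{x}^2$ via Cauchy--Schwarz rather than via the square root of $Q_{\alpha}$, and dropping the $(1-2\alpha)$ term immediately) do not alter the argument.
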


\begin{proof}
  \textbf{(i)} \cref{prop:O*} takes the following special form in the present context: if
  $\exists v_*\in\mathcal{X}$ s.t.
  \begin{align}
    (x_*, v_*) \in \Upsilon_*^{(\lambda)} := 
    \Set*{(x, v) \in\Fix T\times \mathcal{X}\given -
    \tfrac{1}{\lambda}U v = \nabla f(x)}\,, \label{O*.orig.HSDM}
  \end{align}
  then $x_*$ solves $\text{VIP}(\nabla f, \Fix T)$. 

  By following the same steps which start from the beginning of the proof of \cref{thm:basic} till
  \eqref{sum.T.and.v}, it can be verified that
  \begin{align}
    -(1-2\alpha)(T-\Id) x_{n+1} 
    - Q_{\alpha} (x_{n+1} - x_n) 
    - Uv_{n+1} = \lambda \nabla f(T_{\alpha} x_n)
    \,, \label{basic.recursion.n.n+1.orig.HSDM}
  \end{align}
  and by considering any $(x_*, v_*)\in\Upsilon_*^{(\lambda)}$,
  \begin{align}
  \lambda & [\nabla
    f(T_{\alpha} x_n) - \nabla f(T_{\alpha} x_*)] \notag\\
          & = -(1-2\alpha)(Q-\Id)(x_{n+1} - x_*)
            - Q_{\alpha} (x_{n+1} - x_n)
            - U(v_{n+1} - v_*)
            \,. \label{describe.grads.orig.HSDM}
  \end{align}
  As in the proof of \cref{thm:basic}, the Baillon-Haddad theorem~\cite{Baillon.Haddad},
  \cite[Cor.~18.16, p.~270]{Bauschke.Combettes.book} suggests that
  \begin{align}
    \tfrac{2\lambda}{L}
    & \norm{\nabla
      f(T_{\alpha}x_n) - \nabla
      f(T_{\alpha}x_*)}^2 \notag\\ 
    & \mathbin{\leq} 2\lambda
      \innerp{T_{\alpha} x_n -
      T_{\alpha} x_*}{\nabla f(T_{\alpha}x_n) 
      - \nabla f(T_{\alpha} x_*)} \notag\\
    & \mathbin{=} 2\lambda
      \innerp{Q_{\alpha}(x_n - x_*)}{\nabla
      f(T_{\alpha} x_n) - \nabla f(T_{\alpha} x_*)}
      \notag\\ 
    & \mathbin{=} 2\lambda
      \innerp{x_n - x_*}{Q_{\alpha}[\nabla 
      f(T_{\alpha}x_n) - \nabla
      f(T_{\alpha}x_*)]} \notag\\
    & \mathbin{=} 2\lambda
      \innerp{x_{n+1} - x_*}{Q_{\alpha}
      [\nabla f(T_{\alpha}x_n) - \nabla
      f(T_{\alpha}x_*)]} \notag\\
    & \mathbin{\hphantom{\leq}} + 2\lambda
      \innerp{x_{n} - x_{n+1}}{Q_{\alpha}[\nabla
      f(T_{\alpha}x_n) - \nabla
      f(T_{\alpha}x_*)]}\notag\\
    & \mathbin{=} -2(1-2\alpha) \innerp{x_{n+1} -
      x_*}{Q_{\alpha}
      (Q-\Id)(x_{n+1} - x_*)} \notag\\
    & \mathbin{\hphantom{=}} - 2\innerp{x_{n+1} -
      x_*}{Q_{\alpha}^2 (x_{n+1} -
      x_n)} -
      2\innerp{x_{n+1} -
      x_*}{Q_{\alpha} U(v_{n+1} - v_*)} \notag\\
    & \mathbin{\hphantom{\leq}} + 2\lambda \innerp{x_{n}
      - x_{n+1}}{Q_{\alpha}[\nabla
      f(T_{\alpha}x_n) - \nabla 
      f(T_{\alpha}x_*)]} \notag\\
    & \mathbin{=} -2(1-2\alpha)
      \innerp{x_{n+1} - x_*}{Q_{\alpha}(Q-\Id)
      (x_{n+1} - x_*)} \notag\\
    & \hphantom{\mathbin{=}} - 2\innerp{x_{n+1} -
      x_*}{Q_{\alpha}^2
      (x_{n+1} - x_n)} -
      2\innerp{U(x_{n+1} - x_*)}{Q_{\alpha}(v_{n+1} - 
      v_*)} \notag\\
    & \hphantom{\mathbin{=}} + 2\lambda \innerp{x_{n} -
      x_{n+1}}{Q_{\alpha}[\nabla f(T_{\alpha}
      x_n) - \nabla f(T_{\alpha} x_*)]}\notag\\
    & \mathbin{\leq} -2(1-2\alpha)
      \innerp{x_{n+1} - x_*}{Q_{\alpha}(Q
      -\Id)(x_{n+1} - x_*)} \notag\\
    & \hphantom{\mathbin{\leq}} - 2\innerp{x_{n+1} - x_*}{Q_{\alpha}^2
      (x_{n+1} - x_n)} - \tfrac{2}{1-\alpha}
      \innerp{v_{n+1} - v_n}{Q_{\alpha}
      (v_{n+1} - v_*)} \notag\\
    & \hphantom{\mathbin{\leq}} + \tfrac{\lambda
      L}{2}\norm{x_n - x_{n+1}}^2 + \tfrac{2\lambda}{L}
      \norm{Q_{\alpha}[\nabla f(x_n) - \nabla
      f(x_*)]}^2 \notag\\
    & \mathbin{\leq} -2(2\alpha-1)
      \innerp{x_{n+1} -
      x_*}{Q_{\alpha}(\Id-Q)(x_{n+1}  
      - x_*)} \notag\\
    & \hphantom{\mathbin{\leq}} - 2\innerp{x_{n+1} - x_*}{Q_{\alpha}^2
      (x_{n+1} - x_n)} - \tfrac{2}{1-\alpha}
      \innerp{v_{n+1} - v_n}{Q_{\alpha}
      (v_{n+1} - v_*)} \notag\\
    & \hphantom{\mathbin{\leq}} + \tfrac{\lambda
      L}{2}\norm{x_n - x_{n+1}}^2 + \tfrac{2\lambda}{L}
      \norm{\nabla f(T_{\alpha}x_n) - \nabla
      f(T_{\alpha} x_*)}^2 \notag\\
    & \mathbin{\leq} 
      2\innerp{x_* - x_{n+1}}{Q_{\alpha}^2
      (x_{n+1} - x_n)} 
      + \tfrac{2}{1-\alpha}
      \innerp{v_{n+1} - v_n}{Q_{\alpha}
      (v_*- v_{n+1})} \notag\\
    & \mathbin{\hphantom{\leq}} + \tfrac{\lambda
      L}{2}\norm{x_n - x_{n+1}}^2
      + \tfrac{2\lambda}{L} \norm{\nabla
      f(T_{\alpha}x_n) - \nabla
      f(T_{\alpha}x_*)}^2\,. \label{BH.orig.HSDM}
  \end{align}

  Mapping $Q_{\alpha}^2$ is strongly positive: indeed, if $U_{\alpha}$ denotes the square root of
  the strongly positive $Q_{\alpha}$ [\cf~\eqref{Qa.is.PD}], then $\forall x\in\mathcal{X}$,
  $\innerp{Q_{\alpha}^2x}{x} = \innerp{U_{\alpha} Q_{\alpha} U_{\alpha}x}{x} = \innerp{Q_{\alpha}
    U_{\alpha}x}{U_{\alpha}x} \geq (1-\alpha)\innerp{U_{\alpha}x}{U_{\alpha}x} =
  (1-\alpha)\innerp{Q_{\alpha}x}{x} \geq (1-\alpha)^2\norm{x}^2$. Define now the mapping
  $\Upsilon : \mathcal{X}^2 \to \mathcal{X}^2: (x,v) \mapsto (Q_{\alpha}^2x, [1/(1-\alpha)]
  Q_{\alpha}v)$. Mapping $\Upsilon$ turns out to be strongly positive, w.r.t.\ the standard inner
  product of $\mathcal{X}^2$: $\innerp{(x,v)}{(x,v')} := \innerp{x}{x'} + \innerp{v}{v'}$,
  $\forall (x,v), (x',v')\in \mathcal{X}^2$, due to the strong positivity of $Q_{\alpha}^2$ and
  $[1/(1-\alpha)] Q_{\alpha}$. Consequently, one can consider
  $(\mathcal{X}^2, \innerp{\cdot}{\cdot}_{\Upsilon})$ as a Hilbert space equipped with the inner
  product
  $\innerp{(x,v)}{(x,v')}_{\Upsilon} := \innerp{x}{Q_{\alpha}^2x'} + [1/(1-\alpha)]
  \innerp{v}{Q_{\alpha} v'}$, $\forall (x,v), (x',v')\in \mathcal{X}^2$. As such,
  \eqref{BH.orig.HSDM} becomes
  \begin{alignat}{2}
    0 & \,\mathbin{\leq}\, && 2 \innerp{y_{n+1} -
      y_{n}}{\Upsilon(y_*- y_{n+1})} +
    \tfrac{\lambda
      L}{2}\norm{x_n - x_{n+1}}^2 \notag\\
    & \,\mathbin{=}\, && 2 \innerp{y_{n+1} -
      y_{n}}{y_*- y_{n+1}}_{\Upsilon} +
    \tfrac{\lambda L}{2}\norm{x_n
      - x_{n+1}}^2 \notag\\
    & \,\mathbin{=}\, && \norm{y_n -
      y_*}_{\Upsilon}^2 - \norm{y_{n+1} -
      y_*}_{\Upsilon}^2 - \norm{y_{n+1} -
      y_n}_{\Upsilon}^2 + \tfrac{\lambda
      L}{2}\norm{x_n -
      x_{n+1}}^2\,. \label{pre.Fejer.orig.HSDM}
  \end{alignat}
  Choose, now, any $\zeta'$ with $\lambda L/[2(1-\alpha)^2] < \zeta' < 1$. Then, for any
  $y = (x, v)\in \mathcal{X}^2$,
  \begin{align*}
    \tfrac{\lambda L}{2} \norm{x}^2
    & < \zeta' (1-\alpha)^2 \norm{x}^2 \leq \zeta'
      \innerp{x}{Q_{\alpha}^2 x}\\
    & \leq \zeta' \innerp{x}{Q_{\alpha}^2 x} +
      \zeta' \tfrac{1}{1-\alpha}\innerp{v}{Q_{\alpha}
      v} = \zeta' \norm{y}^2_{\Upsilon} \,.
  \end{align*}
  This argument together with \eqref{pre.Fejer.orig.HSDM} yield
  \begin{align}
    \norm{y_n - y_*}_{\Upsilon}^2 -
    \norm{y_{n+1} - y_*}_{\Upsilon}^2
    & \geq \norm{y_{n+1} - y_n}_{\Upsilon}^2 -
      \tfrac{\lambda L}{2}\norm{x_n - x_{n+1}}^2 \notag\\ 
    & \geq \norm{y_{n+1} - y_n}_{\Upsilon}^2 - \zeta'
      \norm{y_{n+1} - y_n}_{\Upsilon}^2 \notag\\
    & = (1-\zeta') \norm{y_{n+1} - y_n}_{\Upsilon}^2 
      \,, \label{Fejer.orig.HSDM}
  \end{align}  
  \ie, sequence $(y_n)_{n\in\IntegerP} \subset (\mathcal{X}^2, \innerp{\cdot}{\cdot}_{\Upsilon})$ is
  Fej\'{e}r monotone w.r.t.\ $\Upsilon_*^{(\lambda)}$ of \eqref{O*.orig.HSDM}.

  \textbf{(ii)} Due to Fej\'{e}r monotonicity, $(y_n)$ is bounded~\cite[Prop.~5.4(i),
  p.~76]{Bauschke.Combettes.book} and possesses a non-empty set of weakly sequential cluster points
  $\mathfrak{W}[(y_n)_n]$~\cite[Lem.~2.37, p.~36]{Bauschke.Combettes.book}. Moreover, it can be
  readily verified, as in \eqref{as.regularity.x.v}, that $\lim_{n\to\infty} (y_{n+1} - y_n) = 0$,
  $\lim_{n\to\infty} (x_{n+1} - x_n) = 0$ and $\lim_{n\to\infty} (v_{n+1} - v_n) = 0$. The rest of
  the proof follows steps similar to those after \eqref{as.regularity.x.v} in the proof of
  \cref{thm:basic}, but with the following twist: $\nabla f(x_{n_k})$ is replaced by
  $\nabla f(T_{\alpha}x_{n_k})$, where all the asymptotic results of the proof of \cref{thm:basic}
  continue to hold due to the Lipschitz continuity of $\nabla f$ and the nonexpansiveness of
  $T_{\alpha}$, \eg, $\forall x,x'\in \mathcal{X}$,
  \begin{align*}
    \norm{\nabla f(T_{\alpha}x) - \nabla f(T_{\alpha}x')} \leq
    L \norm{T_{\alpha}x - T_{\alpha}x'} \leq L
    \norm{x - x'}\,.
  \end{align*}

  \textbf{(iii)} Part~\textbf{(ii)} of this proof has demonstrated that sequences $(x_n)_n$ and
  $(Uv_n)_n$ converge weakly to $\overline{x}$ and $U\overline{v}$, respectively. Consequently, in a
  way similar to part~\textbf{(ii)} of the proof of \cref{thm:basic}, it can be shown also here that
  $(\xi_n)_n$ converges weakly to $\overline{\xi}$.

  Let \cref{as:unif.convx.f} hold true. Then, according to \cite[Ex.~22.3(iii),
  p.~324]{Bauschke.Combettes.book}, given a bounded set $\mathcal{B}\subset \mathcal{X}$, there
  exists an increasing function $\varphi_{\mathcal{B}}: [0,+\infty)\to [0,+\infty]$, which vanishes
  only at $0$, s.t.\ $x, x'\in\mathcal{B}$,
  \begin{align}
    \innerp*{x- x'}{\nabla f(x) -
    \nabla f(x')} \geq 2\varphi_{\mathcal{B}}
    \left(\norm{x - x'}\right)\,. \label{uniform.convex.Talpha} 
  \end{align}
  Due to the nonexpansiveness of $T_{\alpha}$ and the boundedness of $(x_n)_n$, by part~\textbf{(i)}
  of the proof, it turns out that $(T_{\alpha}x_n)_n$ is also bounded:
  $\norm{T_{\alpha}x_n}\leq \norm{T_{\alpha}x_n - T_{\alpha}\overline{x}} +
  \norm{T_{\alpha}\overline{x}} \leq \norm{x_n - \overline{x}} + \norm{\overline{x}} \leq \norm{x_n}
  + 2\norm{\overline{x}}\leq C''+2\norm{\overline{x}}$, for some $C''\in\RealPP$ (recall that
  $\overline{x}\in \Fix T_{\alpha} = \Fix T$). Define, thus, the bounded set
  $\mathcal{B} := (T_{\alpha}x_n)_n \cup \Set{\overline{x}}$. As such, \eqref{uniform.convex.Talpha}
  yields
   \begin{align}
     & \innerp*{(T_{\alpha}-\Id)x_n}{\nabla
     f(T_{\alpha}x_n) - \nabla 
     f(\overline{x})} + \innerp*{x_n - \overline{x}}{\nabla
     f(T_{\alpha}x_n) - \nabla 
     f(\overline{x})} \notag\\
     & = \innerp*{T_{\alpha}x_n- \overline{x}}{\nabla
     f(T_{\alpha}x_n) - \nabla 
     f(\overline{x})} \geq 2\varphi_{\mathcal{B}}
     \left(\norm{T_{\alpha}x_n-\overline{x}}\right)\,, \quad
       \forall n\,. \label{Txn.converges}
   \end{align}
   Part~\textbf{(i)} of this proof has already showed that $\lim_{n\to\infty} (T-\Id)x_n = 0$. As
   such, $\lim_{n\to\infty} (T_{\alpha}-\Id)x_n = \alpha \lim_{n\to\infty}(T-\Id)x_n = 0$. Moreover,
   note that $x_n\rightharpoonup_{n\to\infty} \overline{x}$, and
   $\lim_{n\to\infty} \nabla f(T_{\alpha}x_n) = \nabla f(\overline{x})$. Hence, due also to
   \cite[Lem.~2.41(iii), p.~37]{Bauschke.Combettes.book}, an application of $\lim_{n\to\infty}$ to
   both sides of \eqref{Txn.converges} results in
   $\lim_{n\to\infty} \varphi_{\mathcal{B}} (\norm{T_{\alpha}x_n-\overline{x}}) = 0$, and thus
   $\lim_{n\to\infty} T_{\alpha}x_n = \overline{x}$. Using
   $\lim_{n\to\infty} (T_{\alpha}-\Id)x_n = 0$, one can easily verify that
   $\lim_{n\to\infty} x_n = \lim_{n\to\infty} (\Id - T_{\alpha})x_n + \lim_{n\to\infty}
   T_{\alpha}x_n = \overline{x}$, which establishes part~\ref{thm:plain.vanilla.HSDM.strong.conv} of
   \cref{thm:plain.vanilla.HSDM}.
\end{proof}

The following theorems present convergence rates on the sequence of FM-HSDM estimates.

\begin{theorem}\label{thm:Oh}
  For sequence $(x_n)_{n\in\IntegerP}$ of \eqref{FM-HSDM}, there exists $\xi_n\in \partial g(x_n)$,
  $\forall n$, s.t.\ for any $x_*\in \Fix T$,
  \begin{subequations}
    \begin{align}
      & \tfrac{1}{n+1} \sum\nolimits_{\nu=0}^{n}
        \innerp{x_{\nu+1} - x_*}{(\Id-
        Q)(x_{\nu+1} - x_*)} =
        O(\tfrac{1}{n+1})\,, \label{rate:I-Q}\\ 
      & \tfrac{1}{n+1} \sum\nolimits_{\nu=0}^{n}
        \norm{Uv_{\nu+1} + \lambda [\nabla f(x_{\nu}) +
        \xi_{\nu+1}]}^2 =
        O(\tfrac{1}{n+1})\,, \label{rate:qualif.cond}\\ 
      & \tfrac{1}{n+1}\sum\nolimits_{\nu=0}^{n} \norm{(\Id -
        T)x_{\nu+1}}^2 = O(\tfrac{1}{n+1})\,, \label{rate:I-T} 
    \end{align}
  \end{subequations}
  where the big-oh notation $a_n = O(b_n)$, $b_n>0$, means
  $\lim\sup_{n\to\infty}|a_n|/b_n < +\infty$. Regarding sequence $(x_n)_{n\in\IntegerP}$ of
  \eqref{FM-HSDM.g=0}, \eqref{rate:I-Q}--\eqref{rate:I-T} still hold true, but $\xi_{\nu+1}$ is set
  equal to $0$ in \eqref{rate:qualif.cond}. Similarly, for sequence $(x_n)_{n\in\IntegerP}$ of
  \eqref{FM-HSDM.g=0.original}, \eqref{rate:I-Q}, \eqref{rate:I-T} as well as
  \begin{align*}
    \tfrac{1}{n+1} \sum\nolimits_{\nu=0}^{n}
    \norm{Uv_{\nu+1} + \lambda \nabla f(T_{\alpha} x_{\nu}) }^2 =
    O(\tfrac{1}{n+1})
  \end{align*}
  hold true.
\end{theorem}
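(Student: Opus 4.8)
The plan is to reuse the per-iteration estimate already derived inside the proof of \cref{thm:basic}, but to retain the nonnegative term that was discarded there en route to plain Fej\'er monotonicity. Concretely, combining the displayed chain that produces \eqref{pre.Fejer} with the bound $\tfrac{\lambda L}{2}\norm{x_n-x_{n+1}}^2 \leq \zeta\norm{y_{n+1}-y_n}_{\Theta}^2$ (used just above \eqref{Fejer}) gives, for every $(x_*,v_*)\in\Upsilon_*^{(\lambda)}$ and all $n$,
\begin{align*}
  2(2\alpha-1)\innerp{x_{n+1}-x_*}{(\Id-Q)(x_{n+1}-x_*)} + (1-\zeta)\norm{y_{n+1}-y_n}_{\Theta}^2 \leq \norm{y_n-y_*}_{\Theta}^2 - \norm{y_{n+1}-y_*}_{\Theta}^2\,.
\end{align*}
First I would sum this telescoping inequality; since $\alpha\geq 1/2$, $\Id-Q$ is positive, and both left-hand terms are nonnegative, each of their partial sums is bounded by $\norm{y_2-y_*}_{\Theta}^2$ (plus the finitely many, hence bounded, terms for the starting indices, as $(x_n)_n,(v_n)_n$ are bounded). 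Dividing by $n+1$ yields \eqref{rate:I-Q} after one remark: \cref{prop:FixT} gives $x_*-x_*'\in\ker(\Id-Q)=\ker U$ for any two fixed points, whence $\innerp{x_{\nu+1}-x_*}{(\Id-Q)(x_{\nu+1}-x_*)}=\norm{U(x_{\nu+1}-x_*)}^2$ is in fact independent of the choice of $x_*\in\Fix T$. Thus the estimate obtained for one $(x_*,v_*)\in\Upsilon_*^{(\lambda)}$ (nonempty, since \cref{thm:basic} exhibits a weak limit therein) transfers to every $x_*\in\Fix T$.

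Next I would deduce \eqref{rate:I-T} from \eqref{rate:I-Q}. Since $(\Id-T)x_*=0$, one has $(\Id-T)x_{\nu+1}=(\Id-Q)(x_{\nu+1}-x_*)=U^2(x_{\nu+1}-x_*)$, and because $\norm{Q}\leq 1$ with $Q$ positive forces $\norm{U}=\norm{\Id-Q}^{1/2}\leq 1$ (see the proof of \cref{prop:FixT}), it follows that $\norm{(\Id-T)x_{\nu+1}}^2 \leq \norm{U}^2\norm{U(x_{\nu+1}-x_*)}^2 \leq \innerp{x_{\nu+1}-x_*}{(\Id-Q)(x_{\nu+1}-x_*)}$. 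Summing and invoking \eqref{rate:I-Q} gives \eqref{rate:I-T}.

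For \eqref{rate:qualif.cond} I would read off the residual directly from the basic recursion \eqref{basic.recursion.n.n+1}, which rearranges to $Uv_{\nu+1}+\lambda[\nabla f(x_\nu)+\xi_{\nu+1}] = (1-2\alpha)(\Id-T)x_{\nu+1}-Q_{\alpha}(x_{\nu+1}-x_\nu)$, with $\xi_{\nu+1}$ the subgradients from \eqref{existence.xi}. Using $\norm{a+b}^2\leq 2\norm{a}^2+2\norm{b}^2$ together with $\norm{Q_{\alpha}}\leq 1$ bounds the squared norm by $2(2\alpha-1)^2\norm{(\Id-T)x_{\nu+1}}^2 + 2\norm{x_{\nu+1}-x_\nu}^2$. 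The Ces\`aro average of the first piece is $O(\tfrac{1}{n+1})$ by \eqref{rate:I-T}; for the second, \eqref{Qa.is.PD} gives $\norm{x_{\nu+1}-x_\nu}^2\leq (1-\alpha)^{-1}\norm{y_{\nu+1}-y_\nu}_{\Theta}^2$, and $\sum_\nu\norm{y_{\nu+1}-y_\nu}_{\Theta}^2$ is bounded by \eqref{y.as.regular}, so its Ces\`aro average is $O(\tfrac{1}{n+1})$ as well, establishing \eqref{rate:qualif.cond}.

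Finally, the two variants follow from the identical template: for \eqref{FM-HSDM.g=0} set $\xi_{\nu+1}=0$ throughout; for \eqref{FM-HSDM.g=0.original} replace $\Theta$ by $\Upsilon$, use the basic recursion \eqref{basic.recursion.n.n+1.orig.HSDM}, and note that the retained term now carries the extra factor $Q_{\alpha}$, handled by $\innerp{w}{Q_{\alpha}(\Id-Q)w}\geq(1-\alpha)\innerp{w}{(\Id-Q)w}$ (since $Q_{\alpha}$ and $U$ commute as functions of $Q$ and $Q_{\alpha}\succeq(1-\alpha)\Id$). I expect no genuine obstacle: every ingredient already lives inside the proof of \cref{thm:basic}. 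The one point requiring care is precisely the mismatch flagged above — Fej\'er monotonicity is stated relative to $\Upsilon_*^{(\lambda)}$, whereas \eqref{rate:I-Q} is asserted for arbitrary $x_*\in\Fix T$ — which the $x_*$-independence of $\norm{U(x_{\nu+1}-x_*)}^2$ resolves.
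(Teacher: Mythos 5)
Your argument has one genuine gap: the boundary value $\alpha=1/2$, which the theorem's hypotheses (inherited from \cref{thm:basic}, $\alpha\in[0.5,1)$) explicitly allow and which is the value actually used in all of the paper's numerical tests. Your route to \eqref{rate:I-Q} rests entirely on the retained term $2(2\alpha-1)\innerp{x_{n+1}-x_*}{(\Id-Q)(x_{n+1}-x_*)}$ in the telescoping inequality, and the final step implicitly divides the telescoped sum by $2(2\alpha-1)$. At $\alpha=1/2$ this coefficient is zero, your displayed inequality degenerates to the plain Fej\'er estimate \eqref{Fejer}, and you obtain no control whatsoever on $\sum_{\nu}\innerp{x_{\nu+1}-x_*}{(\Id-Q)(x_{\nu+1}-x_*)}$; moreover the constant you get blows up like $1/(2\alpha-1)$ as $\alpha\downarrow 1/2$. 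Since your proofs of \eqref{rate:I-T} and \eqref{rate:qualif.cond} are chained to \eqref{rate:I-Q}, the whole argument collapses for that value, and the same defect recurs in your treatment of \eqref{FM-HSDM.g=0.original}, where the retained term again carries the factor $2\alpha-1$.

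The missing observation — and the paper's actual device — is that the quantity you need sits inside the \emph{other} term you retained: by \eqref{consecutive.v},
\begin{align*}
  \norm{y_{n+1}-y_n}_{\Theta}^2 \;\geq\; \tfrac{1}{1-\alpha}\norm{v_{n+1}-v_n}^2
  \;=\; (1-\alpha)\norm{U(x_{n+1}-x_*)}^2
  \;=\; (1-\alpha)\innerp{x_{n+1}-x_*}{(\Id-Q)(x_{n+1}-x_*)}\,,
\end{align*}
so the summability \eqref{y.as.regular} alone already yields \eqref{rate:I-Q} for every $\alpha\in[0.5,1)$, with no division by $2\alpha-1$ (this is exactly the role of \eqref{consecutive.ys.1}--\eqref{consecutive.ys.2} in the paper, which extracts all three rates as lower bounds on $\norm{y_{n+1}-y_n}_{\Theta}^2$). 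With this one repair, the rest of what you wrote goes through and is in places leaner than the paper's proof: your derivation of \eqref{rate:qualif.cond} from the rearranged recursion \eqref{basic.recursion.n.n+1} together with $\norm{a+b}^2\leq 2\norm{a}^2+2\norm{b}^2$ avoids the paper's detour through $Q_{\alpha}^{-1}$ (\cref{prop:strongly.pos}, the reverse Young step, and the operator bound \eqref{apply.lemma}), and your remark that $\norm{U(x_{\nu+1}-x_*)}^2$ does not depend on the choice of $x_*\in\Fix T$ is precisely the point that converts estimates stated for $(x_*,v_*)\in\Upsilon_*^{(\lambda)}$ into rates valid for arbitrary $x_*\in\Fix T$, a point the paper leaves implicit.
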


\begin{proof}
  First, notice by \eqref{Qa.is.PD}, \cref{prop:strongly.pos} and $\norm{Q_{\alpha}}\leq 1$ that
  $Q_{\alpha}^{-1}$ exists and it is strongly positive with
  \begin{align}
    \norm{Q_{\alpha}^{-1}} \leq \tfrac{1}{1-\alpha}\,;\qquad 
    (1-\alpha) \norm{x}^2 \leq
    \tfrac{(1-\alpha)}{\norm{Q_{\alpha}}^2} \norm{x}^2 \leq
    \innerp{Q_{\alpha}^{-1}x}{x}, \quad \forall
    x\in\mathcal{X}\,. \label{Qa.inverse.strongly.pos}
  \end{align}
  Then, going back to the discussion following \eqref{Qa.is.PD},
  \begin{subequations}
    \begin{align}
      & \norm{y_{n+1} - y_n}_{\Theta}^2 \notag\\
      & = \norm{x_{n+1} - x_n}_{Q_{\alpha}}^2 + \tfrac{1}{1-\alpha}
        \norm{v_{n+1} - v_n}^2 \label{consecutive.ys.1}\\ 
      & = \norm{Q_{\alpha} (x_{n+1} -
        x_n)}_{Q_{\alpha}^{-1}}^2 + \tfrac{1}{1-\alpha}
        \norm{(1-\alpha) U(x_{n+1} -
        x_*)}^2 \label{consecutive.ys.2} \\ 
      & = \norm{Uv_{n+1} + \lambda [\nabla f(x_n) +
        \xi_{n+1}] -  (1-2\alpha)(\Id-T) x_{n+1}}^2_{Q_{\alpha}^{-1}}
        \notag \\
      & \hphantom{=\ } +
        \tfrac{1}{1-\alpha} \norm{(1-\alpha) U(x_{n+1} -
        x_*)}^2 \label{consecutive.ys.3} \\
      & = \norm{Uv_{n+1} + \lambda [\nabla f(x_n) +
        \xi_{n+1}]}_{Q_{\alpha}^{-1}}^2 + (1-2\alpha)^2
        \norm{(\Id-T) x_{n+1}}^2_{Q_{\alpha}^{-1}}
        \notag \\ 
      & \hphantom{=\ } -2\innerp{Uv_{n+1} + \lambda [\nabla
        f(x_n) + \xi_{n+1}]}{(1-2\alpha)(\Id-T)
        x_{n+1}}_{Q_{\alpha}^{-1}} \notag\\
      & \hphantom{=\ } + \tfrac{1}{1-\alpha} \norm{(1-\alpha)
        U(x_{n+1} - x_*)}^2 \notag \\ 
      & \geq \tfrac{1}{\rho}
        \norm{Uv_{n+1} + \lambda [\nabla f(x_n) +
        \xi_{n+1}]}_{Q_{\alpha}^{-1}}^2 -
        \tfrac{(1-2\alpha)^2}{\rho-1}
        \norm{(\Id-T) x_{n+1}}^2_{Q_{\alpha}^{-1}}
        \notag\\
      & \hphantom{=\ } + \tfrac{1}{1-\alpha} \norm{(1-\alpha)
        U(x_{n+1} - x_*)}^2 \label{consecutive.ys.4}\\
      & = \tfrac{1}{\rho}
        \norm{Uv_{n+1} + \lambda [\nabla f(x_n) +
        \xi_{n+1}]}_{Q_{\alpha}^{-1}}^2 -
        \tfrac{(1-2\alpha)^2}{\rho-1} \norm{(\Id-T)x_{n+1} -
        (\Id-T) x_*}^2_{Q_{\alpha}^{-1}} \notag\\
      & \hphantom{=\ } + (1-\alpha)\innerp{x_{n+1} -
        x_*}{(\Id - Q)(x_{n+1} - x_*)} \notag\\
      & = \tfrac{1}{\rho} \norm{Uv_{n+1} + \lambda
        [\nabla f(x_n) + \xi_{n+1}]}_{Q_{\alpha}^{-1}}^2 -
        \tfrac{(1-2\alpha)^2}{\rho-1}
        \norm{(\Id-Q)(x_{n+1} - x_*)}^2_{Q_{\alpha}^{-1}} \notag\\
      & \hphantom{=\ }
        + (1-\alpha)\innerp{x_{n+1} - x_*}{(\Id-
        Q)(x_{n+1} - x_*)} \notag\\
      & = \tfrac{1}{\rho} \norm{Uv_{n+1} + \lambda [\nabla
        f(x_n) + \xi_{n+1}]}_{Q_{\alpha}^{-1}}^2 \notag\\
      & \hphantom{=\ } -
        \tfrac{(1-2\alpha)^2}{\rho-1} \innerp{x_{n+1} -
        x_*}{(\Id-Q) Q_{\alpha}^{-1}
        (\Id-Q)(x_{n+1} - x_*)} \notag\\
      & \hphantom{=\ } + (1-\alpha)\innerp{x_{n+1} -
        x_*}{(\Id- Q)(x_{n+1} - x_*)} \notag\\
      & \geq \tfrac{1}{\rho} \norm{Uv_{n+1} + \lambda
        [\nabla f(x_n) + \xi_{n+1}]}_{Q_{\alpha}^{-1}}^2 \\
      & \hphantom{\geq\ }-
        \tfrac{(2\alpha-1)^2}{(\rho-1)(1-\alpha)}
        \innerp{x_{n+1} - x_*}{(\Id-Q) (x_{n+1} - x_*)}\notag\\
      & \hphantom{\geq\ } + (1-\alpha)\innerp{x_{n+1} - x_*}{(\Id-
        Q)(x_{n+1} - x_*)} \label{consecutive.ys.5} \\
      & = \tfrac{1}{\rho} \norm{Uv_{n+1} + \lambda [\nabla
        f(x_n) + \xi_{n+1}]}_{Q_{\alpha}^{-1}}^2 +
        \theta \innerp{x_{n+1} - x_*}{(\Id-
        Q)(x_{n+1} - x_*)} \label{consecutive.ys.6} \\
      & \geq \tfrac{(1-\alpha)}{\rho} \norm{Uv_{n+1} + \lambda [\nabla
        f(x_n) + \xi_{n+1}]}^2 + \theta
        \innerp{x_{n+1} - x_*}{(\Id-
        Q)(x_{n+1} - x_*)}\,, \label{consecutive.ys.7}\\
      & \geq \tfrac{(1-\alpha)}{\rho}
        \norm{Uv_{n+1} + \lambda [\nabla f(x_{n}) +
        \xi_{n+1}]}^2 +
        \theta (1-\alpha) \norm{(\Id
        - Q)(x_{n+1} - x_*)}_{Q_{\alpha}^{-1}}^2
        \label{consecutive.ys.8}\\ 
      & = \tfrac{(1-\alpha)}{\rho} \norm{Uv_{n+1} + \lambda [\nabla
        f(x_{n}) + \xi_{n+1}]}^2 +
        \theta (1-\alpha) \norm{(\Id -
        T)x_{n+1}}_{Q_{\alpha}^{-1}}^2
        \label{consecutive.ys.9}\\
      & \geq \tfrac{(1-\alpha)}{\rho} \norm{Uv_{n+1} + \lambda [\nabla
        f(x_{n}) + \xi_{n+1}]}^2 +
        \theta(1-\alpha)^2\norm{(\Id -
        T) x_{n+1}}^2\,, \label{consecutive.ys.10}
    \end{align}
  \end{subequations}
  where the definition of $\Upsilon$, given after \eqref{BH.orig.HSDM}, is used in
  \eqref{consecutive.ys.1}, \eqref{consecutive.v} in \eqref{consecutive.ys.2},
  \eqref{basic.recursion.n.n+1} in \eqref{consecutive.ys.3}, \eqref{Young.ineq} with
  $\eta := \rho/(\rho-1)$, $a := Uv_{n+1} + \lambda [\nabla f(x_n) + \xi_{n+1}]$,
  $b := (1-2\alpha)(\Id-T) x_{n+1}$ and $\Pi := Q_{\alpha}^{-1}$, as well as $\rho>1$ in
  \eqref{consecutive.ys.4}, and
  \begin{subequations}
    \begin{align}
      & \innerp*{x_{n+1} - x_*}{(\Id-Q) Q_{\alpha}^{-1}
        (\Id-Q)(x_{n+1} - x_*)} \notag\\
      & \hspace{10ex} = \innerp*{x_{n+1} - x_*}{U^2Q_{\alpha}^{-1}
        U^2(x_{n+1} - x_*)} \notag \\
      & \hspace{10ex} = \innerp*{U(x_{n+1} -
        x_*)}{\left(UQ_{\alpha}^{-1}U\right) U(x_{n+1} - x_*)} \notag \\
      & \hspace{10ex} \leq \norm*{UQ_{\alpha}^{-1}U}\,
        \innerp*{U(x_{n+1} - x_*)}{U(x_{n+1} - x_*)} \label{UQinvU} \\
      & \hspace{10ex} = \norm*{UQ_{\alpha}^{-1}U}\,
        \innerp*{x_{n+1} - x_*}{(\Id-Q)(x_{n+1} - x_*)}  \notag\\
      & \hspace{10ex} \leq \norm*{U}^2 \norm*{Q_{\alpha}^{-1}}\,
        \innerp*{x_{n+1} - x_*}{(\Id-Q)(x_{n+1} - x_*)} \notag\\
      & \hspace{10ex} = \norm*{\Id-Q} \norm*{Q_{\alpha}^{-1}}\, 
        \innerp*{x_{n+1} - x_*}{(\Id-Q)(x_{n+1} - x_*)} \notag\\
      & \hspace{10ex} \leq \tfrac{1}{1-\alpha} \innerp*{x_{n+1} -
        x_*}{(\Id-Q)(x_{n+1} - x_*)} \label{apply.lemma}
    \end{align}
  \end{subequations}
  with \eqref{Qa.inverse.strongly.pos} and $\norm{\Id-Q}\leq 1$ in \eqref{consecutive.ys.5}. Note
  that \cite[Thm.~9.2-2, p.~466]{Kreyszig} is used in \eqref{UQinvU}. Moreover,
  $\theta := (1-\alpha) - {(2\alpha-1)^2}/[(1-\alpha)(\rho-1)]$ becomes positive for any
  $\rho > 1 + (2\alpha-1)^2/(1-\alpha)^2$ in \eqref{consecutive.ys.6},
  \eqref{Qa.inverse.strongly.pos} in \eqref{consecutive.ys.7}, \eqref{apply.lemma} in
  \eqref{consecutive.ys.8}, the fact
  $(\Id - Q)(x_{n+1} - x_*) = (\Id-T) x_{n+1} - (\Id-T) x_{*} = (\Id-T) x_{n+1}$ in
  \eqref{consecutive.ys.9}, and \eqref{Qa.inverse.strongly.pos} in \eqref{consecutive.ys.10}.

  Due to \eqref{y.as.regular}, the previous considerations suggest that there exists $C\in\RealPP$
  s.t.\ $\forall n$,
  \begin{alignat*}{2}
    \tfrac{C}{n+1}\,
    & \,\mathbin{\geq}\, && \tfrac{1}{n+1}
    \sum\nolimits_{\nu=0}^{n} \norm{y_{\nu+1} -
      y_{\nu}}_{\Theta}^2 \\
    & \,\mathbin{\geq}\, && \tfrac{1}{\rho(n+1)}
    \sum\nolimits_{\nu=0}^{n} \norm{Uv_{\nu+1}
      + \lambda [\nabla f(x_{\nu}) + \xi_{\nu+1}]}^2\\
    &&&\, + \tfrac{\theta}{n+1} \sum\nolimits_{\nu=0}^{n}
    \innerp{x_{\nu+1} - x_*}{(\Id-
      Q)(x_{\nu+1} - x_*)} \\
    & \,\mathbin{\geq}\, && \tfrac{1}{\rho(n+1)}
    \sum\nolimits_{\nu=0}^{n} \norm{Uv_{\nu+1} + \lambda [\nabla
      f(x_{\nu}) + \xi_{\nu+1}]}^2 +
    \tfrac{\theta(1-\alpha)^2}{n+1} \sum\nolimits_{\nu=0}^{n}
    \norm{(\Id - T) x_{\nu+1}}^2\,,
    \end{alignat*}
    which establishes the claim of \cref{thm:Oh} regarding the
    sequence of \eqref{FM-HSDM}. The proof of the claim with regards
    to the sequence of \eqref{FM-HSDM.g=0.original} follows the same
    steps as the previous one, but with the twist of replacing
    $\nabla f(x_n)$ by $\nabla f(T_{\alpha}x_n)$ and $g=0$.
\end{proof}

\begin{theorem}\label{thm:Oh.f=0}
  For the sequence $(x_n)_{n\in\mathbb{N}}$ of \eqref{FM-HSDM.f=0}, there exists
  $\xi_n\in \partial g(x_n)$, $\forall n$, s.t.\ for any $x_*\in \Fix T$,
  \begin{align*}
    & \innerp{x_{n+1} - x_*}{(\Id-Q)(x_{n+1} - x_*)} =
      O(\tfrac{1}{n+1})\,, \\
    & \norm{Uv_{n+1} + \lambda
      \xi_{n+1}}^2 = O(\tfrac{1}{n+1})\,, \\
    & \norm{(\Id - T) x_{n+1}}^2 =
      O(\tfrac{1}{n+1})\,.
  \end{align*}
\end{theorem}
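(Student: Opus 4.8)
The plan is to reuse, verbatim up to setting $\nabla f\equiv 0$, the chain of inequalities \eqref{consecutive.ys.1}--\eqref{consecutive.ys.10} from the proof of \cref{thm:Oh} (with $\Theta$, not $\Upsilon$, and with the $\xi_n$ of \eqref{existence.xi}), which already bounds each of the three target quantities by a fixed multiple of $\norm{y_{n+1}-y_n}_{\Theta}^2$. Concretely, \eqref{consecutive.ys.6} gives $\theta\innerp{x_{n+1}-x_*}{(\Id-Q)(x_{n+1}-x_*)}\leq\norm{y_{n+1}-y_n}_{\Theta}^2$, while \eqref{consecutive.ys.10} gives both $\tfrac{1-\alpha}{\rho}\norm{Uv_{n+1}+\lambda\xi_{n+1}}^2\leq\norm{y_{n+1}-y_n}_{\Theta}^2$ and $\theta(1-\alpha)^2\norm{(\Id-T)x_{n+1}}^2\leq\norm{y_{n+1}-y_n}_{\Theta}^2$. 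Hence the entire statement reduces to the \emph{pointwise} bound $\norm{y_{n+1}-y_n}_{\Theta}^2=O(1/(n+1))$; the difference from \cref{thm:Oh} is that here I need decay of the individual residual, not merely of its Ces\`aro average.

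The key new ingredient, and the step I expect to be the crux, is that for $f=0$ the residuals $(\norm{y_{n+1}-y_n}_{\Theta})_n$ are non-increasing. To see this I would subtract \eqref{basic.recursion.n.n+1} written at index $n-1$ from the one at index $n$; with $\nabla f\equiv 0$ and abbreviating $d_n:=x_{n+1}-x_n$, $e_n:=v_{n+1}-v_n$, the affine offset of $T$ cancels and one obtains $(1-2\alpha)(Q-\Id)d_n+Q_{\alpha}(d_n-d_{n-1})+Ue_n=-\lambda(\xi_{n+1}-\xi_n)$. Pairing with $d_n$ and invoking monotonicity of $\partial g$ (\ie, $\innerp{d_n}{\xi_{n+1}-\xi_n}\geq 0$) makes the right-hand side $\leq 0$. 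On the left, $(1-2\alpha)\innerp{d_n}{(Q-\Id)d_n}=(2\alpha-1)\innerp{d_n}{(\Id-Q)d_n}\geq 0$ since $\alpha\geq 1/2$ and $\Id-Q$ is positive (\cref{prop:FixT}); the relation $e_n-e_{n-1}=(1-\alpha)Ud_n$ from \eqref{consecutive.v} together with self-adjointness of $U$ rewrites $\innerp{d_n}{Ue_n}=\innerp{Ud_n}{e_n}=\tfrac{1}{1-\alpha}\innerp{e_n-e_{n-1}}{e_n}$; and the polarization identity $\innerp{c}{\Pi(c-c')}=\tfrac12(\norm{c}_{\Pi}^2-\norm{c'}_{\Pi}^2+\norm{c-c'}_{\Pi}^2)$ for self-adjoint $\Pi$, applied with $\Pi=Q_{\alpha}$ to $(c,c')=(d_n,d_{n-1})$ and with $\Pi=\Id$ to $(c,c')=(e_n,e_{n-1})$, recombines the surviving terms into $\tfrac12\norm{y_{n+1}-y_n}_{\Theta}^2-\tfrac12\norm{y_n-y_{n-1}}_{\Theta}^2$ plus the non-negative remainders $\tfrac12\norm{d_n-d_{n-1}}_{Q_{\alpha}}^2+\tfrac{1}{2(1-\alpha)}\norm{e_n-e_{n-1}}^2$. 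Discarding the non-negative leftovers yields $\norm{y_{n+1}-y_n}_{\Theta}\leq\norm{y_n-y_{n-1}}_{\Theta}$.

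With this monotonicity in hand the rate is immediate. Because $\nabla f\equiv 0$, the Young-inequality term $\tfrac{\lambda L}{2}\norm{x_n-x_{n+1}}^2$ never enters, so \eqref{pre.Fejer} reads $\norm{y_n-y_*}_{\Theta}^2-\norm{y_{n+1}-y_*}_{\Theta}^2\geq\norm{y_{n+1}-y_n}_{\Theta}^2$ for every $\lambda\in\RealPP$, and telescoping gives a constant $C$ with $\sum_{\nu=0}^{n}\norm{y_{\nu+1}-y_\nu}_{\Theta}^2\leq C$, exactly as in \eqref{y.as.regular}. Since the summands are non-increasing, $(n+1)\norm{y_{n+1}-y_n}_{\Theta}^2\leq\sum_{\nu=0}^{n}\norm{y_{\nu+1}-y_\nu}_{\Theta}^2\leq C$, whence $\norm{y_{n+1}-y_n}_{\Theta}^2\leq C/(n+1)$. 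Feeding this back into the three bounds of the first paragraph delivers the claimed $O(1/(n+1))$ decay for $\innerp{x_{n+1}-x_*}{(\Id-Q)(x_{n+1}-x_*)}$, for $\norm{Uv_{n+1}+\lambda\xi_{n+1}}^2$, and for $\norm{(\Id-T)x_{n+1}}^2$.

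Finally, I would emphasize \emph{why} this argument is confined to $f=0$, which is precisely the reason \cref{thm:Oh.f=0} is stated apart from \cref{thm:Oh}. For general $f$ the subtracted recursion carries an extra term $-\lambda[\nabla f(x_n)-\nabla f(x_{n-1})]$; pairing it with $d_n=x_{n+1}-x_n$ cannot be absorbed by inverse-strong-monotonicity of $\nabla f$, which would instead pair the gradient difference with $d_{n-1}=x_n-x_{n-1}$, so residual monotonicity breaks and only the ergodic rate survives. A minor bookkeeping point to watch is the index range: the subtracted identity and \eqref{consecutive.v} are simultaneously valid only from the start of the Fej\'er-monotone tail ($n\geq 2$), but this merely shifts the constant $C$ and leaves the $O(1/(n+1))$ conclusion intact.
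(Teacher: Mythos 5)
Your proposal is correct and follows essentially the same route as the paper's own proof: the paper likewise differences \eqref{basic.recursion.n.n+1} with $f=0$, pairs with the increment, invokes monotonicity of $\partial g$ and the identity \eqref{consecutive.v} to obtain the residual monotonicity $\norm{y_{n+1}-y_n}^2_{\Theta}\leq\norm{y_{n}-y_{n-1}}^2_{\Theta}$, combines it with the summability \eqref{y.as.regular} to get $\norm{y_{n+1}-y_n}_{\Theta}^2\leq C/(n+1)$, and then feeds this pointwise bound into the chain \eqref{consecutive.ys.1}--\eqref{consecutive.ys.10}. The only differences are notational (your $d_n,e_n$ versus the paper's $\Delta x_{n+1},\Delta v_{n+1}$ with opposite sign) and which lines of the chain are cited at the end.
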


\begin{proof}
  Define here $\Delta x_{n} := x_{n-1} - x_n$, $\Delta v_{n} := v_{n-1} - v_n$,
  $\Delta y_{n} := (\Delta x_{n}, \Delta v_{n})$, and $\Delta \xi_{n} := \xi_{n-1} - \xi_n$,
  $\forall n$. Under these definitions and in the case of $f=0$, \eqref{diff.consecutive.xi} yields
  \begin{align}
    & (1-2\alpha)(Q-\Id)(x_n -
      x_{n+1}) + Q_{\alpha} \left[ (x_n -
      x_{n+1}) - (x_{n-1} - x_{n})\right]
      \notag\\
    & \hspace{40ex} = 
      - U(v_n - v_{n+1}) - \lambda(\xi_n - \xi_{n+1}) \notag\\
    \Leftrightarrow\
    & (1-2\alpha)(Q-\Id)\Delta x_{n+1} +
      Q_{\alpha} (\Delta x_{n+1} - \Delta x_{n}) =
      -U\Delta v_{n+1} - \lambda\Delta \xi_{n+1}
      \notag\\
    \Leftrightarrow\
    & \lambda\Delta \xi_{n+1} =
      - U\Delta v_{n+1} - Q_{\alpha}
      (\Delta x_{n+1} - \Delta x_{n}) -
      (1-2\alpha)(Q-\Id)\Delta x_{n+1}\,.
      \label{basic.recursion.Delta.f=0}  
  \end{align}
  Moreover, \eqref{consecutive.v} suggests that $-\Delta v_{n+1} = (1-\alpha) U(x_{n+1} - x_*)$, and
  thus
  \begin{align}
    \tfrac{1}{1-\alpha}(\Delta v_{n+1} -
    \Delta v_{n}) = U\Delta x_{n+1}\,. \label{consecutive.Delta.v}
  \end{align}

  The monotonicity of $\partial g(\cdot)$, \eqref{basic.recursion.Delta.f=0},
  \eqref{consecutive.Delta.v}, and the definition of $\Theta$, introduced after \eqref{Qa.is.PD},
  imply that
  \begin{align}
    & 0\leq
      \innerp{\Delta x_{n+1}}{\lambda\Delta \xi_{n+1}}
      \notag\\
    \Leftrightarrow\
    & 0 \leq \innerp{\Delta x_{n+1}}{-U\Delta v_{n+1} -
      Q_{\alpha} (\Delta x_{n+1} - \Delta x_{n}) -
      (2\alpha-1)(\Id-Q)\Delta x_{n+1}}\notag\\
    \Leftrightarrow\ 
    & (2\alpha-1)
      \innerp{\Delta x_{n+1}}{(\Id-Q)\Delta x_{n+1}}  
      \notag\\
    & \hspace{10ex} \leq -
      \innerp{U\Delta x_{n+1}}{\Delta v_{n+1}} -
      \innerp{\Delta x_{n+1}}{Q_{\alpha}
      (\Delta x_{n+1} - \Delta x_{n})} \notag\\
    \Leftrightarrow\
    & (2\alpha-1)
      \innerp{\Delta x_{n+1}}{(\Id-
      Q)\Delta x_{n+1}} \notag\\
    & \hspace{10ex} \leq - \tfrac{1}{1-\alpha}
      \innerp{\Delta v_{n+1} -
      \Delta v_{n}}{\Delta v_{n+1}} - 
      \innerp{\Delta x_{n+1}}{Q_{\alpha}
      (\Delta x_{n+1} - \Delta x_{n})} \notag\\
    \Leftrightarrow\
    & (2\alpha-1) \innerp{\Delta x_{n+1}}{(\Id-
      Q)\Delta x_{n+1}} 
      \leq \innerp{\Delta y_{n+1}}{\Delta y_n - 
      \Delta y_{n+1}}_{\Theta} \notag\\ 
    \Leftrightarrow\
    & (2\alpha-1)
      \innerp{\Delta x_{n+1}}{(\Id-
      Q)\Delta x_{n+1}} 
      \leq \tfrac{1}{2} \left(\norm{\Delta y_n}^2_{\Theta} -
      \norm{\Delta y_{n+1}}^2_{\Theta} - \norm{\Delta y_n
      - \Delta y_{n+1}}^2_{\Theta}\right) \notag\\
    \Leftrightarrow\
    & 2(2\alpha-1)
      \innerp{\Delta x_{n+1}}{(\Id-Q)\Delta x_{n+1}} 
      + \norm{\Delta y_n - \Delta y_{n+1}}^2_{\Theta}
      \notag\\
    & \hspace{10ex} \leq \norm{\Delta y_n}^2_{\Theta} -
      \norm{\Delta y_{n+1}}^2_{\Theta}\,,
      \label{before.monotone.diff.y}
  \end{align}
  and due to $\alpha\geq 1/2$ as well as the positive-definiteness of $\Id - Q$,
  \eqref{before.monotone.diff.y} yields
  \begin{align}
    \norm{y_{n+1} - y_n}^2_{\Theta}
    \leq \norm{y_{n} - y_{n-1}}^2_{\Theta} \,, \quad
    \forall n\,. \label{monotone.diff.y}
  \end{align}

  Now, \eqref{y.as.regular} and \eqref{monotone.diff.y} imply that there exists $C>0$ s.t.\ for any
  $n$,
  \begin{align*}
    (n+1) \norm{y_{n+1} - y_n}_{\Theta}^2 \leq \sum\nolimits_{\nu=0}^n
    \norm{y_{\nu+1} - y_{\nu}}_{\Theta}^2 \leq C \,,
  \end{align*}
  and thus $\norm{y_{n+1} - y_n}_{\Theta}^2 \leq {C}/(n+1)$. This result applied to
  \eqref{consecutive.ys.7} and \eqref{consecutive.ys.10} establishes the claim of \cref{thm:Oh.f=0}.
\end{proof}

\section{Numerical tests}\label{sec:tests}

To validate the previous theoretical findings, tests are conducted on a simple scenario which is
motivated by~\cite[Prob.~4.1]{Iiduka.Math.Prog.15}. More elaborate tests, involving noisy real data,
are deferred to an upcoming publication where FM-HSDM is extended to a stochastic setting.

Given dimension $d\in\IntegerPP$, the real Euclidean space $\mathcalboondox{X}_0 := \Real^d$ is
considered. Upon defining the closed ball
$\mathcal{B}[\vect{u}_{\text{c}},r] := \Set{\vect{u}\in \mathcalboondox{X}_0 \given \norm{\vect{u}-
    \vect{u}_{\text{c}}}_2 \leq r}$, for center $\vect{u}_{\text{c}} \in\mathcalboondox{X}_0$ and
radius $r\in\RealPP$, let
$\mathcal{B}_1 := \mathcal{B}[\vect{u}_{\text{c}1},r_1] := \mathcal{B}[2\vect{e}_1,1]$ and
$\mathcal{B}_2 := \mathcal{B}[\vect{u}_{\text{c}2},r_2] := \mathcal{B}[\vect{0},2]$, where
$\vect{e}_1$ stands for the first column of the $d\times d$ identity matrix $\vect{I}_d$. In all
tests, $d := 10,000$. Let also $\vect{P}$ denote a $d\times d$ diagonal positive-definite matrix,
whose \textit{unique}\/ smallest entry $[\vect{P}]_{11}\leq 1$ is fixed at position $(1,1)$, and its
largest entry, placed at position $(d,d)$, is set to be equal to $10$. This setting is fixed across
all experiments. Each experiment in the sequel randomly draws numbers from the interval
$([\vect{P}]_{11},10)$, under the uniform distribution, and places them in the remaining $d-2$
entries of the diagonal of $\vect{P}$. Moreover, in all scenarios, parameter $\alpha$ of FM-HSDM is
set equal to $0.5$, since this value produced the best performance among all theoretically supported
values taken from $[0.5,1)$.

Along the lines of \cite[Prob.~4.1]{Iiduka.Math.Prog.15}, the following constrained quadratic
minimization task is considered:
\begin{align}
  \min_{\vect{u}\in \mathcal{B}_1\cap \mathcal{B}_2} \vect{u}^{\top}
  \vect{Pu} = 
  \min_{\vect{x} := (\vect{x}^{(1)}, \vect{x}^{(2)}, \vect{x}^{(3)})
  \in \mathcalboondox{X}_0^3 =: \mathcal{X}}
  &\ \tfrac{1}{2} \vect{x}^{(1)}{}^{\top}
    \vect{Px}^{(1)} + \iota_{\mathcal{B}_1}(\vect{x}^{(2)}) +
    \iota_{\mathcal{B}_2}(\vect{x}^{(3)}) \notag\\
  \text{s.to}
  &\ \vect{x}^{(1)} = \vect{x}^{(2)} = \vect{x}^{(3)}
    \,, \label{Iiduka.example} 
\end{align}
where
$\vect{x}:= (\vect{x}^{(1)}, \vect{x}^{(2)}, \vect{x}^{(3)}) := [\vect{x}^{(1)}{}^{\top},
\vect{x}^{(2)}{}^{\top}, \vect{x}^{(3)}{}^{\top}]^{\top} \in \mathcalboondox{X}_0^3$, and
$\mathcal{X} := \mathcalboondox{X}_0^3$ with inner product defined as the standard Euclidean
dot-vector product. The definition of the indicator functions
$\iota_{\mathcal{B}_1}, \iota_{\mathcal{B}_2}$ can be found in Sec.~\ref{sec:background}. Since
$\vect{P} \succ \vect{0}$ and the smallest entry of $\vect{P}$ is located at the $(1,1)$ position,
the unique solution to \eqref{Iiduka.example} is $\vect{x}_* :=
(\vect{e}_1,\vect{e}_1,\vect{e}_1)$. There are several ways of viewing \eqref{Iiduka.example} as a
special case of \eqref{the.problem}. For example,
$f(\vect{x}) := (1/2) \vect{x}^{(1)}{}^{\top} \vect{Px}^{(1)}$ and
$g(\vect{x}) := \iota_{\mathcal{B}_1}(\vect{x}^{(2)}) + \iota_{\mathcal{B}_2}(\vect{x}^{(3)})$, for
any $\vect{x} = (\vect{x}^{(1)}, \vect{x}^{(2)}, \vect{x}^{(3)})$. The Lipschitz coefficient of
$\nabla f$ is the largest entry of $\vect{P}$, \ie, $L = 10$, and
$\prox_{\lambda g}(\vect{x}) = (\vect{x}^{(1)}, P_{\mathcal{B}_1}(\vect{x}^{(2)}),
P_{\mathcal{B}_2}(\vect{x}^{(3)}))$. For any $\lambda\in\RealPP$, the proximal mapping of
$\iota_{\mathcal{B}_i}$ becomes $\prox_{\lambda \iota_{\mathcal{B}_i}} = P_{\mathcal{B}_i}$, where
$P_{\mathcal{B}_i}$ denotes the metric projection mapping onto the ball $\mathcal{B}_i$, given by
$P_{\mathcal{B}_i} (\vect{u}) = \vect{u}_{\text{c}i} + (\vect{u} - \vect{u}_{\text{c}i}) r_i/
\max\Set{\norm{\vect{u} - \vect{u}_{\text{c}i}}, r_i}$, for any $\vect{u}\in
\mathcalboondox{X}_0$. Furthermore,
$\mathcal{A} := \Set{\vect{x} = (\vect{x}^{(1)}, \vect{x}^{(2)}, \vect{x}^{(3)}) \in \mathcal{X}
  \given \vect{x}^{(1)} = \vect{x}^{(2)} = \vect{x}^{(3)}}$ is a closed linear subspace and thus an
affine set. According to \cref{ex:consensus}, a nonexpansive mapping $T$ with
$T\in \mathfrak{T}_{\mathcal{A}}$ is the metric projection mapping
$P_{\mathcal{A}}(\vect{x}) = (1/3)(\sum_{i=1}^3\vect{x}^{(i)}, \sum_{i=1}^3\vect{x}^{(i)},
\sum_{i=1}^3\vect{x}^{(i)})$,
$\forall \vect{x} := (\vect{x}^{(1)}, \vect{x}^{(2)}, \vect{x}^{(3)}) \in\mathcal{X}$.

\begin{figure}[!ht]
  \centering
  \includegraphics[width = 1\linewidth]{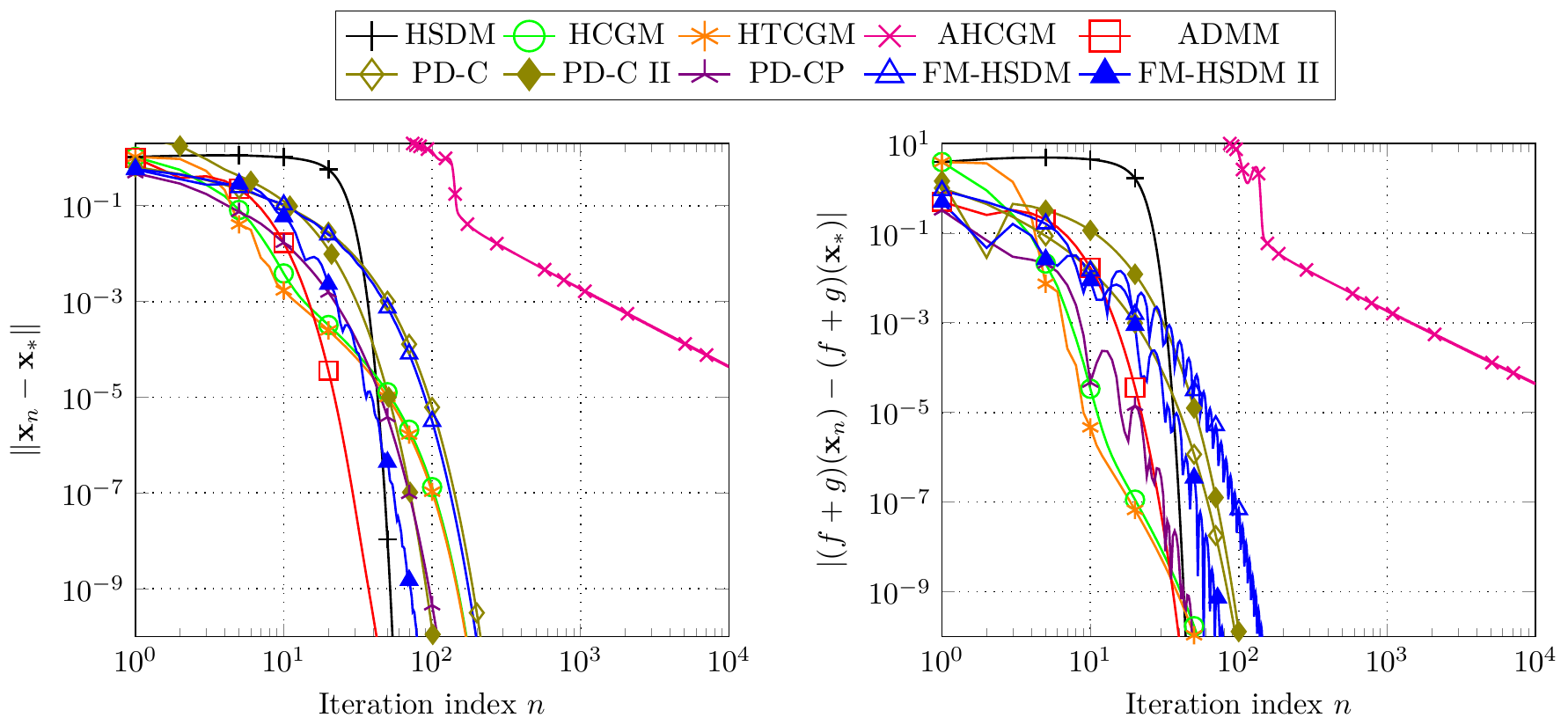}
  \caption{Deviation of the estimate $\vect{x}_n$ from the unique minimizer $\vect{x}_*$ of
    \eqref{Iiduka.example} and deviation of the loss-function value $(f+g)(\vect{x}_n)$ from the
    optimal $(f+g)(\vect{x}_*)$ vs.\ iteration index $n$, in the case where $[\vect{P}]_{11} := 1$
    and thus, the condition number of $\vect{P}$ equals $10$.}\label{fig:WellCond}
\end{figure}

Under the previous view of \eqref{Iiduka.example} as a special case of \eqref{the.problem}, FM-HSDM
is compared with other HSDM-family members such as the original HSDM~\cite{Yamada.HSDM.2001}, the
hybrid conjugate gradient method (HCGM)~\cite{Iiduka.Yamada.09}, the hybrid three-term conjugate
gradient method (HTCGM)~\cite{Iiduka.AMC.11} and the accelerated hybrid conjugate gradient method
(AHCGM)~\cite{Iiduka.Math.Prog.15}. Other competing methods include
ADMM~\cite{glowinski.marrocco.75, gabay.mercier.76, Bredies.Sun.DR.15, Sun.ADMM.16} in the standard
``scaled form''~\cite[\S3.1.1]{Boyd.admm}, and the primal-dual (PD) methods of \cite{Condat.JOTA.13}
(``CP-C'') and \cite{Chambolle.Pock.11} (``PD-CP''). Due to the strongly convex nature of
$\vect{x}^{(1)}{}^{\top} \vect{Px}^{(1)}$, the accelerated Alg.~2 of \cite{Chambolle.Pock.11} with
adaptive step sizes is used in ``PD-CP.''

To test \eqref{FM-HSDM.f=0} and address also the case where $[\vect{P}]_{11}\in\RealPP$ is close to
zero (\cf~Fig.~\ref{fig:IllCond}), \ie, $\vect{P}$ is ``nearly'' singular, $f$ and $g$ can be
considered in a different way than the previous setting: $f := 0$ and
$g(\vect{x}) := (1/2) \vect{x}^{(1)}{}^{\top} \vect{Px}^{(1)} +
\iota_{\mathcal{B}_1}(\vect{x}^{(2)}) + \iota_{\mathcal{B}_2}(\vect{x}^{(3)})$. Results that
associate with this take on \eqref{Iiduka.example} as a special case of \eqref{the.problem} and with
FM-HSDM are shown in the subsequent figures under the tag ``FM-HSDM~II.'' The PD method of
\cite{Condat.JOTA.13} is also adjusted to accommodate this view of \eqref{Iiduka.example}, and the
associated results are shown in Figs.\ \ref{fig:WellCond} and \ref{fig:IllCond} under the tag of
``PD-C II.'' It is worth stressing here that for this specific $g$, the proximal mapping
$\prox_{\lambda g}(\vect{x}) = ((\vect{I}_d + \lambda\vect{P})^{-1} \vect{x}^{(1)},
P_{\mathcal{B}_1}(\vect{x}^{(2)}), P_{\mathcal{B}_2}(\vect{x}^{(3)}))$. In other words, both PD-C II
and FM-HSDM~II use the resolvent $(\vect{I}_d + \gamma\vect{P})^{-1}$, for some adequate
$\gamma\in\RealPP$, similarly to the case of ADMM and PD-CP.

Parameters in all methods were tuned to yield best performance. In all tests, methods start from the
same initial point, randomly drawn from a unit-norm sphere and centered at the unique minimizer of
\eqref{Iiduka.example}. Each curve in Figs.~\ref{fig:WellCond} and \ref{fig:IllCond} is the uniform
average of the curves obtained from $100$ Monte-Carlo runs.

Fig.~\ref{fig:WellCond} considers $[\vect{P}]_{11} := 1$, and since the largest entry of $\vect{P}$
is $10$, the condition number of $\vect{P}$ is $10/1 = 10$. According to the developed theory,
parameter $\lambda$ of FM-HSDM is set equal to $\lambda := 0.99\cdot
2(1-\alpha)/L$. Fig.~\ref{fig:WellCond} shows that all methods, apart from AHCGM, perform
similarly. All HSDM-family members, excluding FM-HSDM~II, as well as PD-C score similar complexities
since they use $\nabla f$ once per iteration. On the contrary, ADMM, PD-CP, PD-C II and FM-HSDM~II
do not utilize $\nabla f$ but build around the resolvent
$(\vect{I}_d + \gamma\vect{P})^{-1}$~\cite{Bauschke.Combettes.book}, for appropriate
$\gamma\in\RealPP$.

The next set of tests follows that of Fig.~\ref{fig:WellCond}, but with
$[\vect{P}]_{11} := 10^{-2}$, which yields the condition number $10/10^{-2} = 10^3$ for
$\vect{P}$. As in the previous setting, parameter $\lambda$ of FM-HSDM is set equal to
$\lambda := 0.99\cdot 2(1-\alpha)/L$. Notice that since the theory which associates with HSDM, HCGM,
HTCGM and AHCGM offers guarantees of convergence in cases where $f$ is strongly convex, \ie,
$\vect{P}$ is positive definite, Fig.~\ref{fig:IllCond} shows that the performance of the
aforementioned algorithms degrades due to the fact that $\vect{P}$ was purposefully chosen to be
``nearly singular.'' Fig.~\ref{fig:IllCond} suggests also that FM-HSDM~II pays the price, by using
$(\vect{I}_d + \gamma\vect{P})^{-1}$, to achieve a performance similar to ADMM. The ``simpler''
FM-HSDM and PD-C, where no matrix inversion is required, face difficulties in following the ADMM,
FM-HSDM~II, PD-C~II and PD-CP curves for such an ill-conditioned minimization task. In theory, any
$\lambda\in \RealPP$ can serve FM-HSDM due to the fact that $f:=0$. In practice, tuning is
necessary, and the value of $\lambda=100$ is used. Fig.~\ref{fig:IllCond} underlines the flexibility
of FM-HSDM, where mappings and computational complexity can be tuned to suit the minimization task
at hand.

\begin{figure}[!ht]
  \centering
  \includegraphics[width = 1\linewidth]{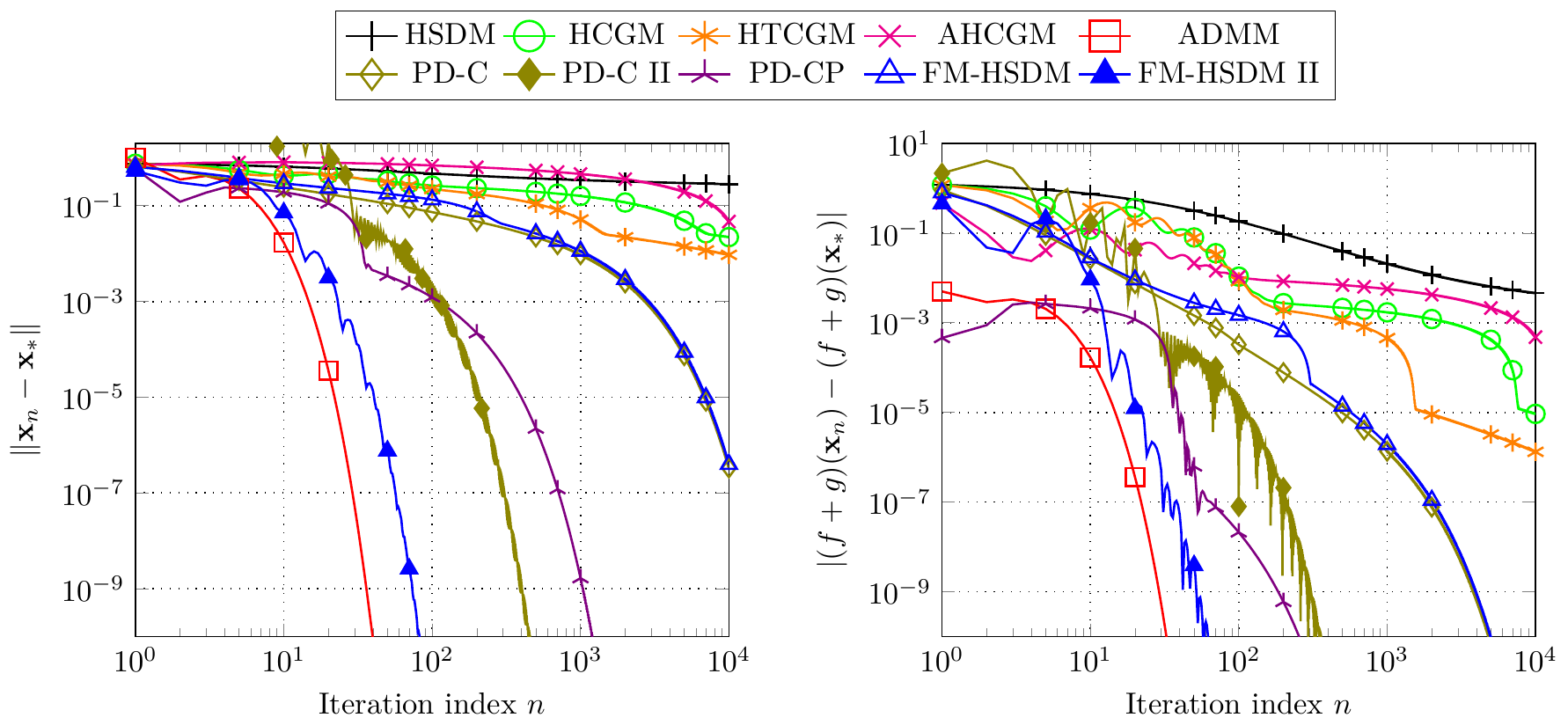}
  \caption{This setting follows that of Fig.~\ref{fig:WellCond}, but with
    $[\vect{P}]_{11} := 10^{-2}$, which results in a condition number $10/10^{-2} = 10^3$ for
    $\vect{P}$.}\label{fig:IllCond}
\end{figure}

To compare \eqref{FM-HSDM.g=0} with \eqref{FM-HSDM.g=0.original}, tests are performed on the
following task:
\begin{align}
  \min_{\vect{x}\in \mathcalboondox{X}_0}\ \vect{x}^{\top} \vect{Px}\quad \text{s.to}\ \vect{x}\in
  \mathcal{V} := \Set{\vect{u}\in \mathcalboondox{X}_0 \given \vect{e}_1^{\top} \vect{u} = 1}\,, \label{task.4.FISTA}
\end{align}
where $\mathcalboondox{X}_0$, $\vect{P}$ and $\vect{e}_1$ were defined earlier in this section, and
$\mathcal{V}$ is a hyperplane; hence, an affine set. Due to the construction of $\vect{P}$, it can
be verified that the minimizer of \eqref{task.4.FISTA} is $\vect{x}_* = \vect{e}_1$. Both
\eqref{FM-HSDM.g=0} and \eqref{FM-HSDM.g=0.original} are employed with $T := P_{\mathcal{V}}$, where
$P_{\mathcal{V}}$ stands for the metric projection mapping onto $\mathcal{V}$
(\cf~\cref{ex:proj.hyperplane}). The results of the application of \eqref{FM-HSDM.g=0} and
\eqref{FM-HSDM.g=0.original} are illustrated in Figs.~\ref{fig:FISTA.WellCond} and
\ref{fig:FISTA.IllCond} as ``FM-HSDM'' and ``FM-HSDM~III,'' respectively.

The state-of-the-art FISTA method~\cite[(4.1)--(4.3)]{FISTA} is also employed here after recasting
\eqref{task.4.FISTA} as
$\min_{\vect{x}\in \mathcalboondox{X}_0} (1/2) \vect{x}^{\top} \vect{Px} +
\iota_{\mathcal{V}}(\vect{x})$, where $\iota_{\mathcal{V}}$ stands for the indicator function of
$\mathcal{V}$. This take on \eqref{task.4.FISTA} opens also the door for \eqref{FM-HSDM.f=0}, under
$g(\vect{x}^{(1)}, \vect{x}^{(2)}) := g_1(\vect{x}^{(1)}) + g_2(\vect{x}^{(2)})$,
$\forall (\vect{x}^{(1)}, \vect{x}^{(2)}) \in \mathcalboondox{X}_0^2$, with
$g_1(\vect{x}^{(1)}) := (1/2) \vect{x}^{(1)}{}^{\top} \vect{P} \vect{x}^{(1)}$,
$\forall \vect{x}^{(1)}$, $g_2 := \iota_{\mathcal{V}}$, and
$\mathcal{A} := \Set{(\vect{x}^{(1)}, \vect{x}^{(2)}) \in \mathcalboondox{X}_0^2 \given
  \vect{x}^{(1)} = \vect{x}^{(2)}}$, similarly to the application of FM-HSDM~II to
\eqref{Iiduka.example}. Tag ``FM-HSDM~II'' is used also in Figs.~\ref{fig:FISTA.WellCond} and
\ref{fig:FISTA.IllCond} to indicate the performance of \eqref{FM-HSDM.f=0}. It is worth noticing
that \eqref{FM-HSDM.g=0.original} can be applied to \eqref{task.4.FISTA}, but not to
\eqref{Iiduka.example}, due to the limitation of $g=0$ in \eqref{FM-HSDM.g=0.original}. Moreover,
FISTA cannot be applied ``innocently'' to \eqref{Iiduka.example}, since its proximal-mapping
step~\cite[(4.1)]{FISTA} amounts to identifying the metric projection of a point onto the
intersection $\mathcal{B}_1\cap \mathcal{B}_2$, which is itself the outcome of an iterative
procedure, such as the projections-onto-convex-sets (POCS) algorithm~\cite[Cor.~5.23,
p.~84]{Bauschke.Combettes.book}. Such computational issues would have been surmounted, had FISTA the
ability to employ the convenient tool of ``splitting of variables,'' which is embedded in ADMM and
primal-dual methods, as well as in FM-HSDM via the affine constraint $\mathcal{A}$
[\cf~\eqref{Iiduka.example}].

The way to construct $\vect{P}$ is identical to that in the case of
\eqref{Iiduka.example}. Parameters $\alpha\, (:= 0.5)$ and $\lambda$ for FM-HSDM and FM-HSDM~II are
identical to those of the \eqref{Iiduka.example} scenario. The step size $\lambda'$ of FM-HSDM~III
is defined as $\lambda' := 0.99\cdot 2(1-\alpha)^2/L$, according to the specifications dictated by
\cref{thm:plain.vanilla.HSDM}. In all tests, methods start from the same initial point, randomly
drawn from a unit-norm sphere and centered at the unique minimizer of \eqref{task.4.FISTA}. Results
are depicted in Figs.~\ref{fig:FISTA.WellCond} and \ref{fig:FISTA.IllCond}, where each curve is the
uniform average of the curves obtained from $100$ Monte-Carlo runs. FM-HSDM~III demonstrates slower
convergence speed than that of the rest of the methods. Note that FISTA guarantees
\textit{optimal}\/ convergence rate $|(f+g)(\vect{x}_n) - (f+g)(\vect{x}_*)| = \mathcal{O}[1/(n+1)^2]$~\cite[Thm.~4.4]{FISTA}. The fast convergence speed of
FM-HSDM~II becomes prominent in the case of Fig.~\ref{fig:FISTA.IllCond}, where $\vect{P}$ suffers a
large condition number.

\begin{figure}[!ht]
  \centering
  \includegraphics[width=1\linewidth]{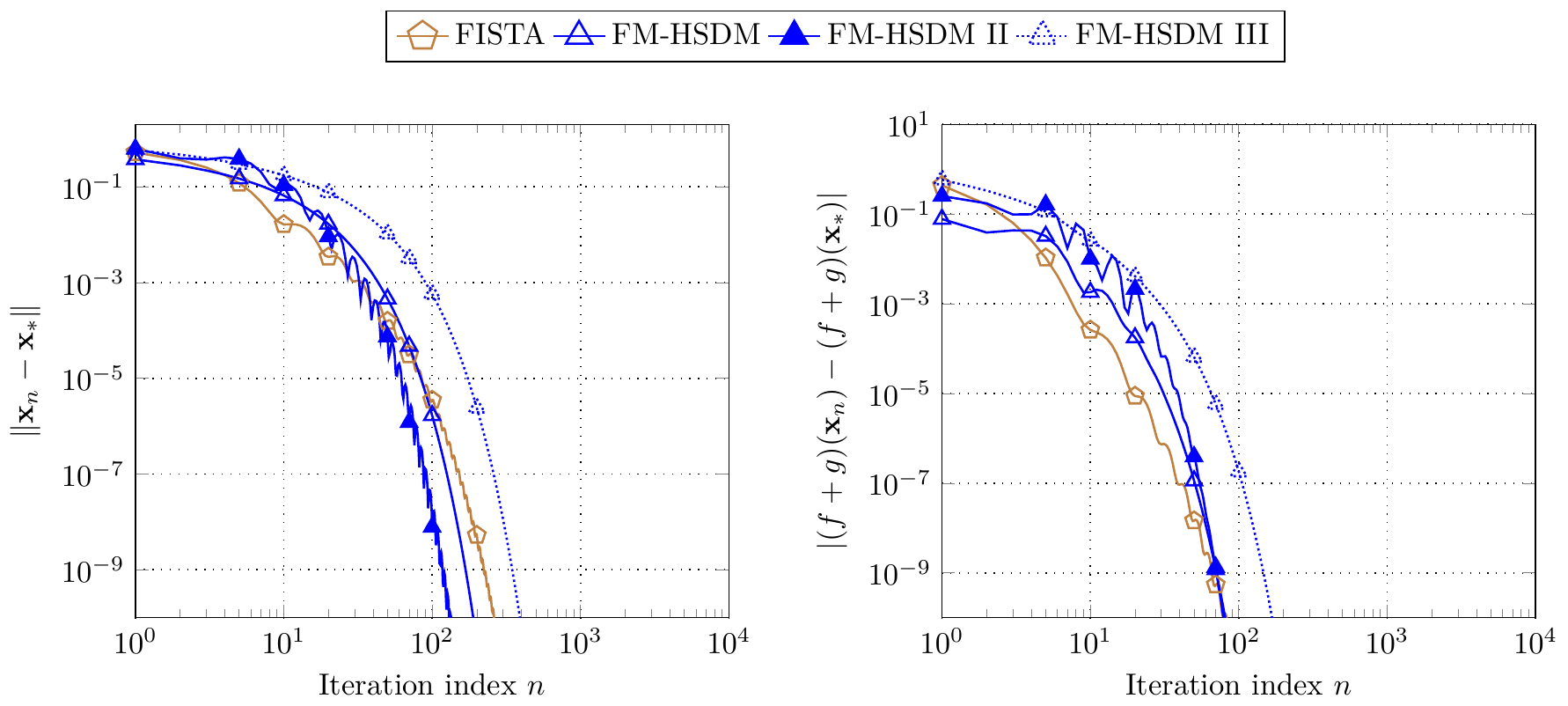}
  \caption{Deviation of the estimate $\vect{x}_n$ from the unique minimizer $\vect{x}_*$ of
    \eqref{task.4.FISTA} and deviation of the loss-function value $(f+g)(\vect{x}_n)$ from the
    optimal $(f+g)(\vect{x}_*)$ vs.\ iteration index $n$, in the case where $[\vect{P}]_{11} := 1$
    and thus, the condition number of $\vect{P}$ equals $10$.}\label{fig:FISTA.WellCond}
\end{figure}

\begin{figure}[!ht]
  \centering
  \includegraphics[width=1\linewidth]{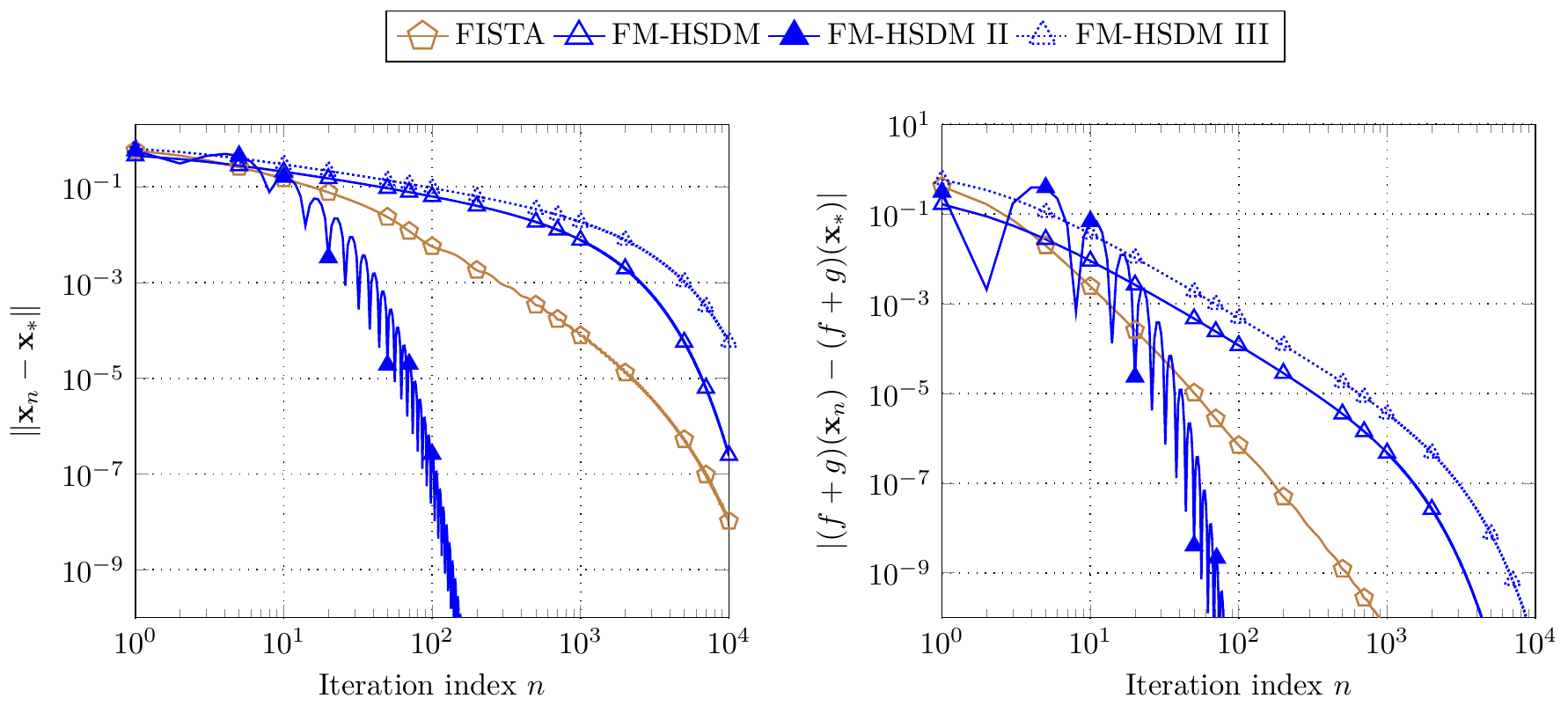}
  \caption{This setting follows that of Fig.~\ref{fig:FISTA.WellCond}, but with
    $[\vect{P}]_{11} := 10^{-2}$, which results in a condition number $10/10^{-2} = 10^3$ for
    $\vect{P}$.}\label{fig:FISTA.IllCond}
\end{figure}

\section{Conclusions}\label{sec:conclusions}

This paper introduced the \textit{Fej\'{e}r-monotone hybrid steepest descent method}\/ (FM-HSDM) for
solving affinely constrained composite minimization tasks in real Hilbert spaces. Only differential
and proximal mappings are used to provide low-computational-complexity recursions with enhanced
flexibility towards the accommodation of affine constraints. The advocated scheme enjoys Fej\'{e}r
monotonicity, a constant step-size parameter across iterations, and minimal presuppositions on the
smooth and non-smooth loss functions to establish weak, and under certain hypotheses, strong
convergence to an optimal point. Results on the rate of convergence of the FM-HSDM's sequence of
estimates were also presented. Numerical tests on synthetic data were also demonstrated to validate
the theoretical findings. Thorough tests on noisy real data, which showcase the flexibility of the
family of mappings $\mathfrak{T}_{\mathcal{A}}$ [\cf~\eqref{T.family}] in a stochastic setting, are
deferred to an upcoming publication.

\section*{Funding}\label{sec:Funds}

This work was partially supported by the NSF awards 1514056, 1525194 and 1718796.

\begin{appendices}
  \crefalias{section}{appsec}

  \section{}\label{sec:appendix}

  Several special cases of $\mathcal{A}$, of large interest in optimization tasks, together with
  members of the family of mappings $\mathfrak{T}_{\mathcal{A}}$ follow.

  \begin{example}\label{ex:consensus}
    Given a Hilbert space $\mathcalboondox{X}_0$ and $I\in\IntegerPP$, consider the Hilbert space
    $\mathcal{X} := \mathcalboondox{X}_0 \times \mathcalboondox{X}_0 \times \ldots \times
    \mathcalboondox{X}_0 = \Set{x := (x^{(1)}, x^{(2)}, \ldots, x^{(I)}) \given
      x^{(i)}\in\mathcalboondox{X}_0, \forall i\in\Set{1,\ldots, I}}$, equipped with the inner product
    $\innerp{x}{x'}_{\mathcal{X}} := \sum_{i=1}^I \innerp{x^{(i)}}{x'{}^{(i)}}$. Then, upon defining
    the (closed) linear subspace
    $\mathcal{S} := \Set{x\in \mathcal{X} \given x^{(1)} = x^{(2)} = \cdots = x^{(I)}}$, the metric
    projection mapping onto $\mathcal{S}$ satisfies
    \begin{align}
      P_{\mathcal{S}} (x) =
      \left(\tfrac{1}{I}\sum\nolimits_{i=1}^I x^{(i)},
      \tfrac{1}{I}\sum\nolimits_{i=1}^I x^{(i)}, \ldots,
      \tfrac{1}{I}\sum\nolimits_{i=1}^I x^{(i)}
      \right)\,, \quad \forall
      x \in \mathcal{X}\,, \label{proj.consensus}
    \end{align}
    and $P_{\mathcal{S}}\in \mathfrak{T}_{\mathcal{S}}$. 
  \end{example}

  \begin{proof}
    Formula \eqref{proj.consensus} can be easily derived by applying \cref{ex:projection} to the
    special cases of $\mathcal{X}$ and $\mathcal{S}$:
    $\norm{x - P_{\mathcal{S}} x}_{\mathcal{X}}^2 = \min_{z\in \mathcalboondox{X}_0} \sum_{i=1}^I
    \norm{x^{(i)} - z}^2$. Then, claim $P_{\mathcal{S}}\in \mathfrak{T}_{\mathcal{S}}$ is established
    by noticing that $\mathcal{S}$ is a closed affine set and by \cref{prop:T(A).family}.
  \end{proof}

  \begin{example}[Metric projection mapping onto a
    hyperplane]\label{ex:proj.hyperplane}
    For a non-zero $a\in\mathcal{X}$ and a real number $b$, consider the metric projection mapping
    onto the hyperplane $\mathcal{V} := \Set{x\in\mathcal{X} \given \innerp{a}{x} = b}$~\cite[(3.11),
    p.~49]{Bauschke.Combettes.book}
    \begin{align}
      P_{\mathcal{V}} = \Id -
      \tfrac{\innerp{a}{\Id}}{\norm{a}^2} a + \tfrac{b}{\norm{a}^2}
      a\,. \label{proj.hyperplane}
    \end{align}
    Then, $P_{\mathcal{V}}\in \mathfrak{T}_{\mathcal{V}}$.
  \end{example}

  \begin{proof}
    The claim follows by the observations that $\mathcal{V}$ is a closed affine set,
    $(b/\norm{a}^2)a\in \mathcal{V}$, and by introducing
    $\mathcal{V} = \Set{x\in\mathcal{X} \given \innerp{a}{x}=0}$, with
    $P_{\mathcal{V}} = \Id - \tfrac{\innerp{a}{\Id}}{\norm{a}^2} a$ and
    $P_{\mathcal{V}}[(b/\norm{a}^2)a] = 0$, in \cref{prop:T(A).family}.
  \end{proof}

  As the following fact states, affine sets obtain a specific form in Euclidean spaces.

  \begin{fact}[\protect{\cite[Thm.~1.4, p.~5]{Rockafellar.convex.analysis}}] \label{fact:Rockafellar}
    Given $\vect{b}\in \Real^M$ ($M\in\IntegerPP$) and $\vect{A}\in \Real^{M\times D}$
    ($D\in\IntegerPP$) the set $\Set{\vect{x}\in\Real^D \given \vect{Ax}=\vect{b}}$, if non-empty, is
    an affine set. Moreover, every affine set in $\mathcal{X}:= \Real^D$ can be represented in this
    way.
  \end{fact}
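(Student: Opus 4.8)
The plan is to verify the two assertions separately, relying throughout on the characterisation that a subset of $\Real^D$ is affine precisely when it is closed under affine combinations, equivalently when it is a translate of a linear subspace.

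For the first assertion, suppose $S := \Set{\vect{x}\in\Real^D \given \vect{Ax}=\vect{b}}$ is non-empty and fix some $\vect{x}_0\in S$. I would show that $S = \vect{x}_0 + \ker\vect{A}$: for any $\vect{x}\in\Real^D$, the linearity of $\vect{A}$ gives $\vect{Ax}=\vect{b} \Leftrightarrow \vect{A}(\vect{x}-\vect{x}_0) = \vect{b}-\vect{b} = \vect{0} \Leftrightarrow \vect{x}-\vect{x}_0\in\ker\vect{A}$. Since $\ker\vect{A}$ is a linear subspace, $S$ is its translate by $\vect{x}_0$, hence affine. (Equivalently, one checks directly that $\lambda\vect{x}+(1-\lambda)\vect{y}\in S$ whenever $\vect{x},\vect{y}\in S$ and $\lambda\in\Real$, since $\vect{A}[\lambda\vect{x}+(1-\lambda)\vect{y}] = \lambda\vect{b}+(1-\lambda)\vect{b}=\vect{b}$.)

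For the converse, let $C$ be an affine set in $\Real^D$. If $C=\emptyset$ it is represented by any inconsistent system, e.g.\ $\vect{0}^{\top}\vect{x}=1$, so assume $C\neq\emptyset$ and fix $\vect{x}_0\in C$. I would first establish that $L := C-\vect{x}_0 = \Set{\vect{x}-\vect{x}_0 \given \vect{x}\in C}$ is a linear subspace (closure under addition and scalar multiplication follows from the affine-combination property of $C$, and $L$ is readily seen to be independent of the chosen base point), so that $C=\vect{x}_0+L$. It then remains to realise $L$ as the kernel of a matrix. Here I would invoke finite-dimensionality: pick a basis $\vect{a}_1,\ldots,\vect{a}_M$ of the orthogonal complement $L^{\perp}$ and let $\vect{A}$ be the $M\times D$ matrix with rows $\vect{a}_1^{\top},\ldots,\vect{a}_M^{\top}$, so that $L^{\perp}=\range\vect{A}^{\top}$ and hence $L=(L^{\perp})^{\perp}=\ker\vect{A}$. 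Setting $\vect{b}:=\vect{A}\vect{x}_0$ then yields $\Set{\vect{x} \given \vect{Ax}=\vect{b}} = \vect{x}_0+\ker\vect{A} = \vect{x}_0+L = C$, as required.

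The routine direction is the first; the substance of the result lies in the converse, and the one genuinely non-trivial ingredient is the identity $L=(L^{\perp})^{\perp}$ together with the existence of a finite spanning set for $L^{\perp}$. Both rest on the finite-dimensionality of $\Real^D$ and would fail for a general infinite-dimensional $\mathcal{X}$ without a closedness hypothesis on the subspace. Once a basis of $L^{\perp}$ is in hand, assembling $\vect{A}$ and $\vect{b}$ and checking the two set inclusions is immediate.
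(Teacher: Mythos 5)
The paper offers no proof of this statement at all: it is quoted as a \emph{Fact} directly from \cite[Thm.~1.4, p.~5]{Rockafellar.convex.analysis}, so the only benchmark is the cited source, whose argument yours essentially reproduces --- the forward direction via $S = \vect{x}_0 + \ker\vect{A}$, and the converse by translating $C$ to a subspace $L$ and realizing $L$ as $\ker\vect{A}$ with the rows of $\vect{A}$ forming a basis of $L^{\perp}$. Your proof is correct; the single cosmetic loose end is the case $C = \Real^D$, where $L^{\perp} = \{\vect{0}\}$ has an empty basis and your construction would give $M=0$, contradicting the requirement $M\in\IntegerPP$ in the statement --- take $\vect{A}$ to be the $1\times D$ zero matrix and $\vect{b}=\vect{0}$ instead.
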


  Motivated by the previous fact and aiming at an algorithmic scheme with wide applicability in
  Euclidean spaces, where most of the minimization problems reside, the following example and
  proposition offer a view of affine sets via \textit{least-squares}\/ (LS) tasks and nonexpansive
  mappings.

  \begin{example}[Affinely constrained LS in Euclidean
    spaces]\label{ex:aff.constraint.LS} For 
    vector $\vect{b}$ and matrix $\vect{A}$ of \cref{fact:Rockafellar}, consider the following LS
    solution set \cite[Prop.~3.25, p.~50]{Bauschke.Combettes.book}:
    \begin{align}
      \mathcal{A}
      & := \Argmin\nolimits_{\vect{x}\in\Real^D} \tfrac{1}{2}
        \norm{\vect{Ax} - \vect{b}}^2 =
        \Set*{\vect{x}\in\Real^D \given \vect{A}^{\top}\vect{A}
        \vect{x} = \vect{A}^{\top} \vect{b}}\,. \label{LS}
    \end{align}
    Now, considering the $D\times 1$ vectors $\Set{\bm{\alpha}_m}_{m=1}^M$, defined by the rows of
    $\vect{A}$, \ie, $[\bm{\alpha}_1, \bm{\alpha}_2, \ldots, \bm{\alpha}_M] := \vect{A}^{\top}$, as
    well as the $D\times 1$ vectors $\Set{\vect{g}_d}_{d=1}^D$ defined via
    $[\vect{g}_1, \ldots, \vect{g}_D] := \vect{G}$, where $\vect{G}:= \vect{A}^{\top} \vect{A}$ and
    $\vect{c}:= [c_1, c_2, \ldots, c_D]^{\top} := \vect{A}^{\top}\vect{b}$, let the hyperplanes
    $\mathcal{A}_m := \Set{\vect{x}\in\Real^D \given \innerp{\bm{\alpha}_m} {\vect{x}} = b_m}$,
    $(m=1,\ldots,M)$, as well as
    $\mathcal{G}_d := \Set{\vect{x}\in\Real^D \given \innerp{\vect{g}_d}{\vect{x}} = c_d}$,
    $(d=1,\ldots,D)$,
    with associated metric projection mappings $P_{\mathcal{A}_m}$ and $P_{\mathcal{G}_d}$,
    respectively [\cf~\eqref{proj.hyperplane}]. Then, any of the following mappings, with $\dagger$
    denoting the Moore-Penrose pseudoinverse operation~\cite{Ben.Israel},
    \begin{subequations}
      \begin{numcases}{T=}
        \left(\vect{I} - \tfrac{\mu}{\varrho}
          \vect{A}^{\top} \vect{A} \right)\Id +
        \tfrac{\mu}{\varrho}
        \vect{A}^{\top}\vect{b}\,, &
        $\varrho\geq \norm{\vect{A}}^2$,
        $\mu\in(0,1]$\,, \label{affine.set.AtA}\\
        (\vect{I} - \vect{A}^{\top}\vect{A}^{\dagger\top}) \Id +
        \vect{A}^{\dagger}\vect{b}\,, \label{affine.set.proj}\\
        (\vect{I} - \vect{G}\vect{G}^{\dagger}) \Id +
        \vect{G}^{\dagger}
        \vect{A}^{\top}\vect{b}\,, & \label{affine.set.proj.via.gram}\\ 
        (\vect{I} + \gamma\vect{A}^{\top}\vect{A})^{-1} \Id + \gamma 
        (\vect{I} + \gamma\vect{A}^{\top}\vect{A})^{-1}\vect{A}^{\top}
        \vect{b}\,, &
        $\gamma\in\RealPP$\,, \label{affine.set.prox}\\
        (1-\beta) \Id + \beta \sum\nolimits_{m=1}^M
        \tfrac{\norm{\bm{\alpha}_m}^2}
        {\norm{\vect{A}}_{\text{F}}^2}
        P_{\mathcal{A}_m}\,, &
        $\beta\in(0,1]$\,, \label{affine.set.grad}\\ 
        (1-\theta) \Id + \theta \sum\nolimits_{d=1}^D \omega_d
        P_{\mathcal{G}_d}\,, & $\begin{cases} \theta\in(0,1]\,,
          \,\omega_d\in(0,1)\,, \\ \sum\nolimits_{d=1}^D \omega_d =
          1\,, \end{cases}$
        \label{affine.set.normal}
      \end{numcases}
    \end{subequations}
    satisfies $T\in \mathfrak{T}_{\mathcal{A}}$.

    Further, given also the $M_0 \times 1$ ($M_0\in \IntegerPP$) vector $\vect{b}_0$, the
    $M_0\times D$ matrix $\vect{A}_0$, let the non-empty affine constraint set
    $\mathcal{K} := \Set{\vect{x}\in\Real^D \given \vect{A}_0 \vect{x} = \vect{b}_0}$, with metric
    projection mapping
    $P_{\mathcal{K}} = (\vect{I} - \vect{A}_0^{\top}\vect{A}_0^{\dagger\top}) \Id +
    \vect{A}_0^{\dagger}\vect{b}_0$~\cite[Prop.~3.17, p.~47]{Bauschke.Combettes.book}. Then, according
    to~\cite[Ex.~34, p.~120]{Ben.Israel},
    \begin{align}
      & \vect{x}\in \mathcal{A}_{\mathcal{K}} :=
        \Argmin\nolimits_{\vect{z}\in\mathcal{K}}\,
        \tfrac{1}{2} \norm{\vect{A}\vect{z} - \vect{b}}^2 \notag\\
      & \Leftrightarrow \exists \bm{\mu} \in\Real^{M_0}\
        \text{s.t.}\ (\vect{x}, 
        \bm{\mu}) \in \overline{\mathcal{A}} :=
        \Set*{(\vect{x}', \bm{\mu}')\in\Real^D\times \Real^{M_0}
        \given \overbrace{\left[\begin{smallmatrix}
              \vect{A}^{\top}\vect{A}
              & \vect{A}_0^{\top} \\
              \vect{A}_0 & \vect{0}
            \end{smallmatrix}\right]}^{\vect{L}\,:=}
                           \left[\begin{smallmatrix} \vect{x}' \\
                               \bm{\mu}'
                             \end{smallmatrix}\right] = 
      \overbrace{\left[\begin{smallmatrix}
            \vect{A}^{\top} 
            \vect{b}\\
            \vect{b}_0 
          \end{smallmatrix}\right]}^{\vect{e}\,:=}}
      \,, \label{constrained.LS.characterize} 
    \end{align}
    or, in other words, $\mathcal{A}_{\mathcal{K}} = \Pi_{\Real^D} \overline{\mathcal{A}}$, where
    $\Pi_{\Real^D}$ denotes the mapping
    $\Pi_{\Real^D}: \Real^D\times \Real^{M_0} \to \Real^D: (\vect{x}, \bm{\mu}) \mapsto
    \vect{x}$. Define also the $(D+M_0)\times 1$ vectors
    $[\vect{l}_1, \ldots, \vect{l}_{D+M_0}] := \vect{L}$, as well as the hyperplanes
    $\mathcal{L}_d := \Set{(\vect{x}', \bm{\mu}')\in\Real^D \times \Real^{M_0} \given
      \innerp{\vect{l}_d}{(\vect{x}', \bm{\mu}')} = e_d}$,
    with $P_{\mathcal{L}_d}$ denoting the associated metric projection mapping
    [\cf~\eqref{proj.hyperplane}]. Then, any of the following mappings
    $\overline{T}: \Real^{D+M_0}\to \Real^{D+M_0}$:
    \begin{subequations}
      \begin{numcases}{\overline{T}=}
        \left( \vect{I} -
          \tfrac{\overline{\mu}}{\overline{\varrho}}
          \vect{L}^{\top} \vect{L} \right)\Id +
        \tfrac{\overline{\mu}}{\overline{\varrho}}
        \vect{L}^{\top}\vect{b}\,, & $\overline{\varrho}\geq
        \norm{\vect{L}}^2$, $\overline{\mu}\in(0,1]$\,,
        \label{constrained.affine.set.LtL}\\
        \left(\vect{I} - \vect{L}^{\top}\vect{L}^{\dagger\top}
        \right) \Id + \vect{L}^{\dagger}\vect{e}\,,
        \label{constrained.affine.set.proj}\\
        \left(\vect{I} +
          \overline{\gamma}\vect{L}^{\top}\vect{L} \right)^{-1} \Id
        + \overline{\gamma} \left(\vect{I} +
          \overline{\gamma}\vect{L}^{\top}\vect{L}\right)^{-1}
        \vect{L}^{\top} \vect{e}\,, & $\overline{\gamma}\in
        \RealPP$\,,
        \label{constrained.affine.set.prox}\\
        (1 - \overline{\theta}) \Id + \overline{\theta}
        \sum\nolimits_{d=1}^{D+M_0} \overline{w}_d
        P_{\mathcal{L}_d}\,, &
        $\begin{cases} \overline{\theta} \in (0,1]\,,
          \,\overline{\omega}_d\in(0,1)\,, \\
          \sum\nolimits_{d=1}^{D+M_0} \overline{\omega}_d =
          1\,, \end{cases}$ \label{constrained.LS.intersection.hyperplanes}
      \end{numcases}
      satisfies
      $\overline{T}\in
      \mathfrak{T}_{\overline{\mathcal{A}}}$. Moreover,
      the mapping $T:\Real^D\to \Real^D$, defined by
      \begin{align}
        T := (1- \overline{\beta}) P_{\mathcal{K}} +
        \overline{\beta} 
        P_{\mathcal{K}} \sum\nolimits_{m=1}^M
        \tfrac{\norm{\bm{\alpha}_m}^2} 
        {\norm{\vect{A}}_{\text{F}}^2} P_{\mathcal{A}_m}
        P_{\mathcal{K}}\,, \quad \overline{\beta}\in (0,1]\,,
        \label{constrained.affine.set.grad}
      \end{align}
    \end{subequations}%
    satisfies $T\in \mathfrak{T}_{\mathcal{A}_{\mathcal{K}}}$.
  \end{example}

  \begin{proof}
    For $\delta\in\RealPP$, define
    \begin{align}
      \varphi_{\delta}(\vect{x}) := \tfrac{1}{2\delta}
      \norm{\vect{Ax} - \vect{b}}^2, \quad
      \forall\vect{x}\in\Real^D\,, \label{phi.delta}
    \end{align}
    and verify that
    $\nabla\varphi_{\delta}= (1/\delta)\vect{A}^{\top} \vect{A}\Id - (1/\delta) \vect{A}^{\top}
    \vect{b}$. According to \eqref{LS}, all points $\vect{x}\in\Real^D$ s.t.\
    $\nabla\varphi_{\delta}(\vect{x}) = \vect{0}$ constitute $\mathcal{A}$. Moreover, for any
    $\varrho\geq \norm{\vect{A}}^2/\delta$,
    $\norm{\nabla\varphi_{\delta}(\vect{x}) - \nabla\varphi_{\delta}(\vect{x}')} \leq
    (\norm{\vect{A}}^2/\delta) \norm{\vect{x} - \vect{x}'} \leq \varrho \norm{\vect{x} - \vect{x}'}$,
    $\forall\vect{x}, \vect{x}'\in\Real^D$, since
    $\norm{\vect{A}^{\top} \vect{A}} = \norm{\vect{A}}^2$. In other words, $\nabla\varphi_{\delta}$ is
    $\varrho$-Lipschitz continuous, which, according to the Baillon-Haddad
    theorem~\cite{Baillon.Haddad}, \cite[Cor.~18.16, p.~270]{Bauschke.Combettes.book}, is equivalent
    to that $(1/\varrho) \nabla\varphi_{\delta}$ is firmly nonexpansive iff
    $\Id - (1/\varrho) \nabla\varphi_{\delta}$ is firmly nonexpansive [\cf~\cref{ex:firmly.nonexp}]
    with fixed-point set equal to $\mathcal{A}$. By utilizing once again \cref{ex:firmly.nonexp},
    $R := 2[\Id - (1/\varrho ) \nabla\varphi_{\delta}] - \Id$ is nonexpansive, and for any
    $\zeta\in (0,1]$,
    $R' := \zeta R + (1-\zeta) \Id = \Id - (2\zeta/\varrho)\nabla\varphi_{\delta} = \{\vect{I}-
    [2\zeta/(\varrho\delta)]\vect{A}^{\top} \vect{A}\}\Id + [2\zeta/(\varrho\delta)] \vect{A}^{\top}
    \vect{b}$ is nonexpansive with $\Fix(R') = \mathcal{A}$. Due to the nonexpansiveness of $R'$,
    $\norm{\vect{I}- [2\zeta/(\varrho\delta)]\vect{A}^{\top} \vect{A}} \leq 1$
    (\cf~\cref{fact:affine.nonexp.T}). Constraining $\zeta\in (0,1/2]$ guarantees that
    $\vect{I}- [2\zeta/(\varrho\delta)]\vect{A}^{\top} \vect{A} \succeq \vect{0}$. By defining
    $\mu:= 2\zeta$ and $\delta:=1$, the claim regarding \eqref{affine.set.AtA} is established.

    The metric projection mapping $P_{\ker\vect{A}}$ onto $\ker\vect{A}$ is
    $P_{\ker\vect{A}} = (\vect{I}- \vect{A}^{\top}\vect{A}^{\top\dagger})\Id$~\cite[Prop.~3.28(iii),
    p.~51]{Bauschke.Combettes.book}. Since $\mathcal{A} = \ker\vect{A} + \vect{A}^{\dagger}\vect{b}$
    \cite[Prop.~3.28(i), p.~51]{Bauschke.Combettes.book}, \cite[Prop.~3.17,
    p.~47]{Bauschke.Combettes.book} suggests that the metric projection mapping $P_{\mathcal{A}}$ onto
    $\mathcal{A}$ becomes
    $P_{\mathcal{A}} = P_{\ker\vect{A}} + \vect{A}^{\dagger}\vect{b} - P_{\ker\vect{A}}
    (\vect{A}^{\dagger}\vect{b}) = P_{\ker\vect{A}} + \vect{A}^{\dagger}\vect{b}$, due to
    $P_{\ker\vect{A}} (\vect{A}^{\dagger}\vect{b}) = \vect{0}$~\cite[Prop.~3.28(i),
    p.~51]{Bauschke.Combettes.book}. Hence, \eqref{affine.set.proj} is an immediate consequence of
    \cref{prop:T(A).family}. By \cite[Ex.~18(d), p.~49]{Ben.Israel},
    $\vect{A}^{\top}\vect{A}^{\top\dagger} = \vect{A}^{\top}\vect{A} (\vect{A}^{\top}
    \vect{A})^{\dagger} = \vect{GG}^{\dagger}$ and
    $\vect{A}^{\dagger} \vect{b} = (\vect{A}^{\top} \vect{A})^{\dagger}\vect{A}^{\top} \vect{b} =
    \vect{G}^{\dagger} \vect{A}^{\top} \vect{b}$. Hence, \eqref{affine.set.proj.via.gram} follows
    easily from \eqref{affine.set.proj}.

    Now, for any $\gamma'\in \RealPP$,
    $\prox_{\gamma' \varphi_{\delta}} = (\vect{I} + (\gamma'/\delta) \vect{A}^{\top} \vect{A})^{-1}\Id
    + (\gamma'/\delta) (\vect{I} + (\gamma'/\delta) \vect{A}^{\top} \vect{A})^{-1} \vect{A}^{\top}
    \vect{b}$. Setting $\gamma:= \gamma'/\delta$, the nonexpansiveness of
    $\prox_{\gamma\delta\varphi_{\delta}}$, stated by \cref{ex:prox}, suggests that
    $\norm{(\vect{I} + \gamma\vect{A}^{\top} \vect{A})^{-1}}\leq 1$ (\cf~\cref{fact:affine.nonexp.T}),
    and that $\Fix (\prox_{\gamma\delta\varphi_{\delta}}) = \mathcal{A}$. Due also to the fact that
    $(\vect{I} + \gamma\vect{A}^{\top} \vect{A})^{-1}$ is positive, the claim regarding
    \eqref{affine.set.prox} is established.
    
    Let $\delta:= \norm{\vect{A}}_{\text{F}}^2$ in \eqref{phi.delta}, so that
    \begin{align*}
      \varphi_{\norm{\vect{A}}_{\text{F}}^2}(\vect{x})
      & = \tfrac{1}{2\norm{\vect{A}}_{\text{F}}^2} \norm{\vect{Ax} -
        \vect{b}}^2 = \tfrac{1}{2\norm{\vect{A}}_{\text{F}}^2}
        \sum\nolimits_{m=1}^M \left(\innerp{\bm{\alpha}_m}{\vect{x}} -
        b_m\right)^2 \\
      & = \tfrac{1}{2} \sum\nolimits_{m=1}^M
        \tfrac{\norm{\bm{\alpha}_m}^2}{\norm{\vect{A}}_{\text{F}}^2}
        \norm{\vect{x} - P_{\mathcal{A}_m}(\vect{x})}^2 = \tfrac{1}{2}
        \sum\nolimits_{m=1}^M w_m \norm{\vect{x} -
        P_{\mathcal{A}_m}(\vect{x})}^2\,,
    \end{align*}
    where the explicit expression of $P_{\mathcal{A}_m}$ is given in \eqref{proj.hyperplane}, and the
    non-negative weights $\Set{w_m := \norm{\bm{\alpha}_m}^2/ \norm{\vect{A}}_{\text{F}}^2}_{m=1}^M$
    satisfy $\sum_{m=1}^M w_m = 1$. It can be also verified by the Fr\'{e}chet-gradient definition
    \cite[Def.~2.45, p.~38]{Bauschke.Combettes.book} that
    $\nabla \norm{(\Id - P_{\mathcal{A}_m})\vect{x}}^2 = 2(\Id - P_{\mathcal{A}_m})\vect{x}$, which
    yields
    \begin{align*}
      \nabla\varphi_{\norm{\vect{A}}_{\text{F}}^2}
      & = \sum\nolimits_{m=1}^M w_m (\Id -
        P_{\mathcal{A}_m}) = \Id - \sum\nolimits_{m=1}^M w_m
        P_{\mathcal{A}_m}\,.
    \end{align*}
    Hence, all minimizers of $\varphi_{\norm{\vect{A}}_{\text{F}}^2}$, \ie, $\mathcal{A}$, constitute
    the fixed-point set of $\sum\nolimits_m w_m P_{\mathcal{A}_m}$, which is equal to the fixed-point
    set of the mapping in \eqref{affine.set.grad}. Hence, by utilizing the trivial fact
    $\Id\in\mathfrak{T}$ and by applying also \cref{prop:convex.comb.affine} to
    $(1-\beta)\Id + \beta\sum\nolimits_m w_m P_{\mathcal{A}_m}$, the claim of \eqref{affine.set.grad}
    is established.

    Regarding \eqref{affine.set.normal}, notice first that $\mathcal{A} = \cap_{d=1}^D
    \mathcal{G}_d$. According to \cref{ex:convex.comb.maps},
    $\mathcal{A} = \Fix(\sum_d \omega_d P_{\mathcal{G}_d})$. Since $P_{\mathcal{G}_d} \in\mathfrak{T}$
    (\cf~\cref{ex:proj.hyperplane}), \cref{prop:convex.comb.affine} yields
    $\sum_d \omega_d P_{\mathcal{G}_d}\in\mathfrak{T}$. As a result, fact $\Id\in\mathfrak{T}$ and
    \cref{prop:convex.comb.affine} yield
    $(1-\theta) \Id + \theta \sum\nolimits_d \omega_d P_{\mathcal{G}_d}\in\mathfrak{T}$, which
    establishes the claim of \eqref{affine.set.normal}. Due to
    $\overline{\mathcal{A}} = \Argmin_{(\vect{x}, \bm{\mu})} \norm{\vect{L} [\vect{x}^{\top},
      \bm{\mu}^{\top}]^{\top} - \vect{e}}^2$, arguments similar to those developed for
    \eqref{affine.set.AtA}, \eqref{affine.set.proj} and \eqref{affine.set.prox} yield
    \eqref{constrained.affine.set.LtL}, \eqref{constrained.affine.set.proj} and
    \eqref{constrained.affine.set.prox}, respectively. Further, notice that since
    $\overline{\mathcal{A}} = \cap_{d=1}^{D+M_0} \mathcal{L}_d$,
    \eqref{constrained.LS.intersection.hyperplanes} is deduced in a way similar to the derivation of
    \eqref{affine.set.normal} from \eqref{LS}.

    Regarding \eqref{constrained.affine.set.grad}, notice that
    $\mathcal{A}_{\mathcal{K}} = \Fix
    T_{\mathcal{A}_{\mathcal{K}}}$~\cite[Prop.~4.2(a)]{Yamada.HSDM.2001}, where
    \begin{align*}
      T_{\mathcal{A}_{\mathcal{K}}} := (1-
      \overline{\beta}) \Id + \overline{\beta} P_{\mathcal{K}}
      \sum\nolimits_{m=1}^M \tfrac{\norm{\bm{\alpha}_m}^2}
      {\norm{\vect{A}}_{\text{F}}^2} P_{\mathcal{A}_m}
    \end{align*}
    is nonexpansive for $\overline{\beta}\in (0, 3/2]$. Since
    $\mathcal{A}_{\mathcal{K}} = \Fix T_{\mathcal{A}_{\mathcal{K}}} = \Fix
    T_{\mathcal{A}_{\mathcal{K}}} \cap \mathcal{K} = \Fix T_{\mathcal{A}_{\mathcal{K}}} \cap \Fix
    P_{\mathcal{K}}$, \cref{ex:composition.maps} suggests that $\mathcal{A}_{\mathcal{K}}$ can be seen
    also as the fixed-point set of the nonexpansive mapping
    $T_{\mathcal{A}_{\mathcal{K}}}P_{\mathcal{K}}$, which is nothing but the mapping appearing at
    \eqref{constrained.affine.set.grad}. Now, due to \cref{prop:convex.comb.affine} and
    \cref{ex:proj.hyperplane}, $\sum_m w_mP_{\mathcal{A}_m}\in\mathfrak{T}$, with
    $w_m := \norm{\bm{\alpha}_m}^2/ \norm{\vect{A}}_{\text{F}}^2$. Hence, \cref{prop:compose.affine}
    suggests also that
    $P_{\mathcal{K}} (\sum_m w_mP_{\mathcal{A}_m}) P_{\mathcal{K}}\in \mathfrak{T}$. Once again, since
    $P_{\mathcal{K}} \in\mathfrak{T}$ (\cf~\cref{prop:T(A).family}), \cref{prop:convex.comb.affine}
    guarantees
    $(1- \overline{\beta})P_{\mathcal{K}} + \overline{\beta} P_{\mathcal{K}}\sum_m
    w_mP_{\mathcal{A}_m} P_{\mathcal{K}}\in \mathfrak{T}$, for $\overline{\beta} \in (0,1]$, which
    establishes the claim of \eqref{constrained.affine.set.grad}.
  \end{proof}

  An auxiliary proposition, used in \cref{thm:Oh}, follows.

  \begin{prop}\label{prop:strongly.pos}
    Given the surjective and strongly positive mapping $\Pi\in\mathfrak{B}(\mathcal{X})$, \ie, there
    exists $\delta\in\RealPP$ s.t. $\innerp{\Pi x}{x}\geq \delta\norm{x}^2$,
    $\forall x\in\mathcal{X}$, the inverse $\Pi^{-1}$ exists and
    $\Pi^{-1}\in \mathfrak{B}(\mathcal{X})$ with $\norm{\Pi^{-1}}\leq 1/\delta$. Moreover, $\Pi^{-1}$
    is strongly positive and
    $(\delta/\norm{\Pi}^2)\norm{x}^2\leq \innerp{\Pi^{-1}x}{x} \leq (1/\delta) \norm{x}^2$,
    $\forall x\in\mathcal{X}$.
  \end{prop}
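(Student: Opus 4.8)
The plan is to extract a lower bound on $\Pi$ from strong positivity, then use surjectivity to invert, and finally transfer two-sided estimates to $\Pi^{-1}$. First I would observe that strong positivity, together with the Cauchy--Schwarz inequality, forces $\Pi$ to be bounded below: for every $x\in\mathcal{X}$, $\delta\norm{x}^2\leq\innerp{\Pi x}{x}\leq\norm{\Pi x}\,\norm{x}$, hence $\delta\norm{x}\leq\norm{\Pi x}$. In particular $\Pi$ is injective, and since $\Pi$ is assumed surjective it is a bijection of $\mathcal{X}$ onto itself. The bounded-below estimate then yields boundedness of the inverse directly, with no appeal to the open mapping theorem: substituting $x:=\Pi^{-1}y$ gives $\delta\norm{\Pi^{-1}y}\leq\norm{y}$ for all $y$, i.e. $\Pi^{-1}\in\mathfrak{B}(\mathcal{X})$ with $\norm{\Pi^{-1}}\leq 1/\delta$.

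Next I would record that $\Pi$ is self-adjoint (being strongly positive in the sense of \cref{def:positive.op}), whence $\Pi^{-1}$ is self-adjoint as well, since $(\Pi^{-1})^*=(\Pi^*)^{-1}=\Pi^{-1}$. To see that $\Pi^{-1}$ is positive, and to obtain the lower estimate at the same time, I would set $y:=\Pi^{-1}x$ and compute $\innerp{\Pi^{-1}x}{x}=\innerp{y}{\Pi y}=\innerp{\Pi y}{y}\geq\delta\norm{y}^2=\delta\norm{\Pi^{-1}x}^2\geq 0$. Combining this with $\norm{x}=\norm{\Pi\Pi^{-1}x}\leq\norm{\Pi}\,\norm{\Pi^{-1}x}$, so that $\norm{\Pi^{-1}x}\geq\norm{x}/\norm{\Pi}$, delivers $\innerp{\Pi^{-1}x}{x}\geq(\delta/\norm{\Pi}^2)\norm{x}^2$, which is simultaneously the lower bound and the assertion that $\Pi^{-1}$ is strongly positive with constant $\delta/\norm{\Pi}^2$.

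For the upper estimate I would invoke the self-adjoint positivity of $\Pi^{-1}$ and the characterization $\norm{\Pi^{-1}}=\sup_{x\neq 0}\innerp{\Pi^{-1}x}{x}/\norm{x}^2$~\cite[Thm.~9.2-2, p.~466]{Kreyszig}, obtaining $\innerp{\Pi^{-1}x}{x}\leq\norm{\Pi^{-1}}\,\norm{x}^2\leq(1/\delta)\norm{x}^2$ via the norm bound from the first step. This closes all the claims. I do not expect a genuine obstacle here: the only point requiring care is the boundedness of $\Pi^{-1}$, and the bounded-below inequality handles it without the open mapping theorem, keeping the argument elementary.
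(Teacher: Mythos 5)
Your proposal is correct and follows essentially the same route as the paper's proof: the same substitution argument (your $y:=\Pi^{-1}x$ is the paper's $x:=\Pi x'$) for the lower bound and strong positivity of $\Pi^{-1}$, and the same norm-bound reasoning for the upper estimate $\innerp{\Pi^{-1}x}{x}\leq(1/\delta)\norm{x}^2$. The only difference is cosmetic: where the paper cites \cite[\S2.7, Prob.~7, p.~101]{Kreyszig} for the existence and boundedness of $\Pi^{-1}$, you prove it inline via the bounded-below inequality $\delta\norm{x}\leq\norm{\Pi x}$ plus surjectivity, which is a perfectly valid (and self-contained) way to get the same conclusion.
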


  \begin{proof}
    \cite[\S2.7, Prob.~7, p.~101]{Kreyszig} guarantees the existence of $\Pi^{-1}$ and
    $\Pi^{-1}\in \mathfrak{B}(\mathcal{X})$. By the strong positivity of $\Pi$,
    $\forall x\in \mathcal{X}\setminus (\Set{0} = \ker \Pi^{-1})$,
    $\norm{\Pi^{-1}x}^2 \leq (1/\delta) \innerp{\Pi^{-1}x}{\Pi(\Pi^{-1}x)} = (1/\delta)
    \innerp{\Pi^{-1}x}{x} \leq (1/\delta) \norm{\Pi^{-1}x}\norm{x} \Rightarrow \norm{\Pi^{-1}x} \leq
    (1/\delta) \norm{x}\Rightarrow \norm{\Pi^{-1}} \leq (1/\delta)$. By \cite[Thm.~9.4-2,
    p.~476]{Kreyszig} and the previous result, $\forall x\in\mathcal{X}$,
    $\innerp{\Pi^{-1}x}{x}\leq \norm{\Pi^{-1}} \norm{x}^2\leq (1/\delta)\norm{x}^2$. Moreover,
    $\forall x'\in\mathcal{X}$,
    $\innerp{\Pi x'}{\Pi^{-1}\Pi x'} = \innerp{\Pi x'}{x'} \geq \delta \norm{x'}^2 \geq
    (\delta/\norm{\Pi}^2) \norm{\Pi x'}^2$, which yields, under $x:= \Pi x'$, that
    $\forall x\in\mathcal{X}$, $(\delta/\norm{\Pi}^2)\norm{x}^2\leq \innerp{\Pi^{-1}x}{x}$.
  \end{proof}

\end{appendices}



\end{document}